\def\dOi{12(4:4)2016}
\theoremstyle{plain}
\theoremstyle{plain}
\newtheorem{theorem}[thm]{Theorem}
\newtheorem{lemma}[thm]{Lemma}
\theoremstyle{definition}
\newtheorem{definition}[thm]{Definition}
\newtheorem{remark}[thm]{Remark}
\newtheorem{example}[thm]{Example}
\newtheorem*{claim}{Claim}
\newcommand{\fr}{\mbox{}^\smallfrown}
\newcommand{\pair}[1]{\langle #1 \rangle}
\newcommand{\upr}{\!\upharpoonright\!}
\newcommand{\om}{\omega}
\begin{document}

\title[Borel-Piecewise Continuous Reducibility]{Borel-Piecewise Continuous Reducibility for Uniformization Problems}

\author[T.~Kihara]{Takayuki Kihara}	
\address{Department of Mathematics, University of California, Berkeley, United States}	
\email{kihara@math.berkeley.edu}  
\thanks{The author was partially supported by a Grant-in-Aid for JSPS fellows.}




\keywords{Piecewise continuity, priority argument, selection, Weihrauch degree, Muchnik degree}


\begin{abstract}
  \noindent 
We study a fine hierarchy of Borel-piecewise continuous functions, especially, between closed-piecewise continuity and $G_\delta$-piecewise continuity.
Our aim is to understand how a priority argument in computability theory is connected to the notion of $G_\delta$-piecewise continuity, and then we utilize this connection to obtain separation results on subclasses of $G_\delta$-piecewise continuous reductions for uniformization problems on set-valued functions with compact graphs.
This method is also applicable for separating various non-constructive principles in the Weihrauch lattice.
\end{abstract}

\maketitle

\section{Introduction}

\subsection{Historical Background}\label{section:history}

For topological spaces $\mathcal{X}$ and $\mathcal{Y}$, a function $f:\mathcal{X}\to \mathcal{Y}$ is {\em $\sigma$-continuous} (or {\em countably continuous}) if there is a countable cover $\{\mathcal{X}_n\}_{n\in\omega}$ of $\mathcal{X}$ such that $f\upr \mathcal{X}_n$ is continuous for every $n\in\omega$.
If each $\mathcal{X}_n$ can be chosen as a $\mathbf{\Gamma}$ set, then $f$ is said to be {\em $\mathbf{\Gamma}$-piecewise continuous}.
It is clear that every $\sigma$-continuous Borel function is always Borel-piecewise continuous.
The notion of $\sigma$-continuity was first proposed by Luzin, who asked, in the early 20th century, whether every Borel function is $\sigma$-continuous.
Although Luzin's problem has been solved negatively, in recent years, the notion of $\sigma$-continuity itself has received increasing attention in descriptive set theory and related areas.
In these areas, researchers have accomplished an enormous amount of work connecting finite-level Borel functions and Borel-piecewise continuous functions (see \cite{MilCar15a,MilCar15b,Debs14,GreKihNg,JR82,KaMoSe,Kih15,MRos13,PawSab12,SemPhD,Solecki98}).
These works have also led us to the discovery that the notion of piecewise continuity plays a crucial role in the study of the hierarchy of Borel isomorphisms (see \cite{JayRog79a,KihPau}).

The hierarchies of closed-piecewise continuous functions have been extensively studied in various areas of mathematics and computer science, e.g., in the context of the levels of discontinuity \cite{deBre14,Hem08,Hert96,Pau10}, the subhierarchy of Baire-one-star functions \cite{Malek06,OMa77,Paw00}, and the mind-change hierarchy \cite{FreSmi93}.
The transfinite hierarchy of levels of discontinuity (numbers of mind-changes, etc.)~is actually useful for analyzing the Baire hierarchy of Borel functions. For instance, Solecki \cite[Theorem 3.1]{Solecki98} used a transfinite derivation process to obtain his dichotomy theorem for Baire-one functions, and Semmes \cite[Lemma 4.3.3]{SemPhD} introduced a higher level analog of a transfinite derivation process to prove his $G_\delta$-decomposition theorem for the $\Lambda_{2,3}$ functions (a subclass of the Baire-two functions).

The class of $\sigma$-continuous functions which are not closed-piecewise continuous is also found to have a crucial role in various fields. For instance, such a notion is closely associated with the notion of {\em countable-dimensionality} in infinite dimensional topology (see \cite{vMillBook}).
This class is also important in the study of Borel isomorphisms because, whenever two given Polish spaces are $\sigma$-continuously isomorphic, they are always $G_\delta$-piecewise-continuously isomorphic, whereas they are not necessarily closed-piecewise-continuously isomorphic (see \cite{KihPau,vMillBook}).
For another example, $G_\delta$-piecewise continuity is closely connected to the notion of {\em partial learning} in computational learning theory (see \cite{STLBook}).

In this article, we will introduce variations of Wadge degrees to measure the difficulty of uniformization problems.
The Wadge degrees provide a classification of subsets of a topological space with respect to continuous reducibility.
Recently, in order to analyze the structure of subsets of a higher-dimensional Polish space, several researchers started to study variations of Wadge degrees using finite-level Borel functions (see \cite{MSS15}), which are known to be related to Borel-piecewise continuous functions as mentioned above.

We will investigate subclasses of $G_\delta$-piecewise continuous reductions to compare uniformization problems which do not admit $\sigma$-continuous uniformizations.
Recall that the decomposition theorem of second-level Borel functions into $G_\delta$-piecewise continuous functions on finite dimensional Polish spaces has been proved by Semmes \cite{SemPhD}.
Remarkably, Semmes utilized a {\em priority argument} (a standard technique in computability theory) to prove his decomposition theorem on $G_\delta$-piecewise Baire-one functions.
Our ultimate goal is to understand why a priority argument is useful for analyzing $G_\delta$-piecewise continuous/Baire-one functions.

\subsection{Summary}\label{sec:summary}

In this article, the notion of $G_\delta$-piecewise continuity is subdivided into the notions of piecewise continuity with respect to {\em labeled well-founded trees}.
We will regard a labeled well-founded tree (which generates a certain subclass of the $G_\delta$-piecewise continuous functions) as a {\em priority tree}, and then function application as the act of finding the {\em true path} of the priority tree.
We will utilize this way of thinking to obtain separation results on subclasses of $G_\delta$-piecewise continuous reductions for uniformization problems on set-valued functions with compact graphs.

This method is also applicable for separating various non-constructive principles in the Weihrauch lattice.
For instance, our main results imply several statements of the following kind:
\begin{itemize}[label=($\dagger$)]
\item
For any $n\in\om$, there exist multi-valued functions $F_n,G_n:2^\om\rightrightarrows 2^\om$ whose graphs are $\Pi^0_1$ (hence $F_n,G_n\leq_{\rm W}{\sf WKL}$) such that
\begin{align*}
{\sf WKL}&\leq_{\rm W}{\sf C}_\mathbb{N}\star({\sf LPO}')^\ast\star\dots\star{\sf C}_\mathbb{N}\star{\sf LPO}'\star F_n,\\
{\sf WWKL}&\not\leq_{\rm W}({\sf LPO}')^\ast\star{\sf C}_\mathbb{N}\star\dots\star({\sf LPO}')^\ast\star{\sf C}_\mathbb{N}\star F_n,\\
{\sf WKL}&\leq_{\rm W}{\sf C}_\mathbb{N}\star({\sf LPO}')^\ast\star\dots\star{\sf C}_\mathbb{N}\star({\sf LPO}')^\ast\star{\sf LPO}\star G_n,\\
{\sf WWKL}&\not\leq_{\rm W}({\sf LPO}')^\ast\star{\sf C}_\mathbb{N}\star\dots\star({\sf LPO}')^\ast\star{\sf C}_\mathbb{N}\star({\sf LPO}')^\ast\star G_n.
\end{align*}
Here, ${\sf A}\star{\sf B}\star\cdots$ indicates $n$ repetitions of the sequential composition ${\sf A}\star {\sf B}$, i.e., $({\sf A}\star{\sf B})^{(n)}$.
The symbols ${\sf WKL}$, ${\sf WWKL}$, ${\sf LPO}$, and ${\sf C}_\mathbb{N}$ denote weak K\"onig's lemma, weak weak K\"onig's lemma, the limited principle of omniscience, and the closed choice principle on the natural numbers, respectively.
Moreover, $\star$, ${}^\ast$, and $'$ denote sequential composition, finite parallelization, and the jump operation, respectively.
\end{itemize}

\noindent For notations and terminologies in the above statement ($\dagger$), see \cite{BGR15,BGM12,BraPau12}.
We will not use any of the above notations and terminologies in the proof of our main theorems, so we do not require that the reader be familiar with the Weihrauch lattice.

\subsection{Notations}

Let $\om$ denote the set of all non-negative integers.
For a set $X$, by $X^{<\om}$ we mean the set of all finite strings $\sigma$ from $X$, that is, all functions $\sigma$ whose domain is a finite initial segment of $\om$ such that $\sigma(n)\in X$ for all $n\in{\rm dom}(\sigma)$.
This ${\rm dom}(\sigma)$ is also written as $|\sigma|$, and called the length of $\sigma$.
An individual string $\sigma\in X^{<\om}$ is sometimes written as $\langle\sigma(0),\sigma(1),\dots,\sigma(|\sigma|-1)\rangle$.
In particular, the empty string is denoted by $\langle\rangle$.
For strings $\sigma,\tau\in X^{<\om}$, by $\sigma\fr \tau$ we denote the concatenation of $\sigma$ and $\tau$.
By $\sigma^{\rm last}$ we denote the last entry of $\sigma$, and then $\sigma^-$ is the result by dropping the last entry from $\sigma$, that is, $\sigma=\sigma^-\fr\sigma^{\rm last}$.
We write $\sigma\preceq\tau$ if $\sigma$ is an initial segment of $\tau$, and if $n<|\sigma|$ then, by $\sigma\upr n$ we denote the unique initial segment of $\sigma$ of length $n$.
A tree $T$ on $X$ is a subset of $X^{<\om}$ closed under taking initial segments.
The unique $\preceq$-minimal element (that is, the empty string $\langle\rangle$) of a tree $T$ is called the root.
A string $\sigma\in T$ is a terminal or a leaf if it is a $\preceq$-maximal node.
By $T^{\rm leaf}$ we denote the set of all leaves in $T$.
For each $\sigma\in T$, by ${\rm succ}_T(\sigma)$ we denote the set of all immediate successors of $\sigma$.

We also use several notions and techniques from Computability Theory.
For instance, by $\leq_T$ we denote Turing reducibility, and for $x,y\in X^\om$, the sum $x\oplus y$ is defined by $(x\oplus y)(2n)=x(n)$ and $(x\oplus y)(2n+1)=y(n)$ for each $n\in\om$.
For basic terminology from Computability Theory and Computable Analysis, see Soare \cite{SoareBook} and Weihrauch \cite{WeihBook}, respectively.

\section{Borel-Piecewise Continuous Reducibility}

\subsection{Uniformization Problems}

In this article, a {\em space} is always assumed to be separable metrizable.
For spaces $\mathcal{X}$ and $\mathcal{Y}$, a relation $F\subseteq \mathcal{X}\times\mathcal{Y}$ is called a (partial) {\em set-valued function} or a (partial) {\em multi-valued function}, and denoted by $F:\subseteq\mathcal{X}\rightrightarrows\mathcal{Y}$.
We also denote by $F(x)$ the set $\{y\in\mathcal{Y}:(x,y)\in F\}$, and by ${\rm dom}(F)$ the set $\{x\in\mathcal{X}:F(x)\not=\emptyset\}$.
A function $\psi:{\rm dom}(F)\to\mathcal{Y}$ is called a {\em selection} or a {\em uniformization} of $F$ if $\psi(x)\in F(x)$ for all $x\in{\rm dom}(F)$.
In this case, we say that {\em $\psi$ uniformizes $F$}.

There are numerous works on uniformization theorems (measurable/continuous selection theorems; see \cite{JR02,Wag77,Wag80}).
For instance, it is known that every Borel relation in a product Polish space admits a uniformization which is measurable with respect to the smallest $\sigma$-algebra including all analytic sets (Yankov-von Neumann), while such a set does not necessarily admit a Borel uniformization (Novikov).
Recently, the classification problem of individual uniformization problems in classical mathematics has started to be developed in Computable Analysis (see \cite{BraGui11b,BGR15,BGM12}) based on ideas originated from Reverse Mathematics.

In this article, we focus on uniformization problems on compact-valued functions.
Indeed, we require set-valued functions not only to be compact-valued, but also to have compact graphs.
We sometimes call such a function a {\em compact-graph multifunction}.
It is known that such a function is {\em upper semi-continuous}.
Selection/uniformization problems on upper semi-continuous closed-valued functions have been widely investigated in various areas of mathematics (see Jayne-Rogers \cite{JR02}).
In particular, one can deduce the following result from known facts:

\begin{fact}
A compact set $\mathcal{K}\subseteq 2^\om\times 2^\om$ always admits a Baire-one uniformization, wheareas $\mathcal{K}$ does not necessarily admit a $\sigma$-continuous uniformization.
\end{fact}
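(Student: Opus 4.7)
The statement has two parts, which I treat separately.

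For the existence of a Baire-one uniformization, I would use the lex-least selector: define $\psi(x)=\min_{\rm lex}\mathcal{K}_x$ for $x\in{\rm dom}(\mathcal{K})$, which is well-defined because each slice is non-empty and compact. To verify $\psi$ is Baire class one, fix $\sigma\in 2^{<\om}$ and note that $A_\sigma:=\{x:\mathcal{K}_x\cap[\sigma]\neq\emptyset\}$ is the first-coordinate projection of the compact set $\mathcal{K}\cap(2^\om\times[\sigma])$, hence closed. The preimage
\[
\psi^{-1}([\sigma])=A_\sigma\setminus\bigcup_{\tau<_{\rm lex}\sigma,\,|\tau|=|\sigma|}A_\tau
\]
is a difference of closed sets, hence $F_\sigma$, so $\psi$ is Baire-one.

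For the failure of $\sigma$-continuous uniformization, I would exhibit a specific compact $\mathcal{K}\subseteq 2^\om\times 2^\om$. The strategy is to encode a fixed non-$\sigma$-continuous Baire-one function $f:2^\om\to 2^\om$ (one exists by a Pawlikowski-style diagonalization against $F_\sigma$ decompositions) into $\mathcal{K}$ so that every uniformization $\psi$ determines, via a continuous transformation, a function whose $\sigma$-continuity would imply that of $f$. Taking ${\rm graph}(f)$ itself fails, since it is closed only if $f$ is continuous; instead I would thicken, picking continuous approximants $f_n\to f$ and letting $\mathcal{K}$ be the closure of $\bigcup_n{\rm graph}(f_n)$ augmented with tagging coordinates that survive in the closure and force every selection to interact with the approximation structure. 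A hypothetical $\sigma$-continuous uniformization $\psi$, witnessed by closed $\{C_n\}_{n\in\om}$ on each of which $\psi$ is continuous, would then yield a $\sigma$-continuous decomposition of $f$, contradicting the choice of $f$.

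The main obstacle is this construction: one must simultaneously keep $\mathcal{K}$ closed, exclude cheap selections such as picking a constant cluster point, and ensure that the non-$\sigma$-continuity of the encoded $f$ transfers to every uniformization. The framework hinted at in the introduction---countable-dimensional but not strongly countable-dimensional subsets of $2^\om$ in the sense of van Mill, together with Semmes's priority-style analysis of Baire-two functions---furnishes the correct machinery: inside such a subspace, a Baire-category argument against every countable closed cover of the domain produces the desired $\mathcal{K}$.
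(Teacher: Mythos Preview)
Your treatment of the first half is correct and is essentially the paper's intended argument: the lex-least selector is precisely the uniform content of the Kleene--Kreisel Basis Theorem, and your computation of $\psi^{-1}([\sigma])$ as a locally closed (hence $F_\sigma$) set is clean.

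For the second half, however, there is a genuine gap. You correctly identify the central obstacle---that the closure of $\bigcup_n{\rm graph}(f_n)$ will acquire spurious fibers allowing a uniformization to bypass $f$ entirely---but the proposed remedy (``tagging coordinates that survive in the closure'', a Baire-category argument inside a countable-dimensional space) is not a construction, only a hope. Nothing you have written prevents a uniformization from selecting, say, a continuously varying limit point of the $f_n(x)$ unrelated to $f(x)$; and the allusion to van Mill's theory and Semmes's priority analysis does not supply the missing mechanism, since those results concern decomposability of given functions rather than forcing non-$\sigma$-continuity onto \emph{every} selector of a compact relation.

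The paper takes a completely different and far shorter route: it invokes the Kleene Non-Basis Theorem together with the folklore characterization (stated later in Section~\ref{section:main-computable}) that $\psi$ is $\sigma$-continuous iff $\psi(x)\le_T x\oplus z$ for some fixed oracle $z$. One simply exhibits a $\Pi^0_1$ set $\mathcal{K}\subseteq 2^\om\times 2^\om$ such that $\mathcal{K}(x)$ has no $x$-computable element for every $x$---for instance $\mathcal{K}=\{(x,y):K^x(y\upr n)\ge n-1\mbox{ for all }n\}$ as in Example~\ref{example:non-uniformizable}(1). Any $\sigma$-continuous uniformization $\psi$ would give $\psi(x)\le_T x\oplus z$; taking $x\ge_T z$ makes $\psi(x)$ an $x$-computable element of $\mathcal{K}(x)$, a contradiction. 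This avoids entirely the delicate encoding problem you are wrestling with.
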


The above fact can also be obtained from the Kleene-Kreisel Basis Theorem, and the Kleene Non-Basis Theorem (see Kihara \cite{Kih15} for how to interpret the results from Computability Theory in the context of $\sigma$-continuity; see also Section \ref{section:main-computable}).
Numerous number of results concerning compact-graph multifunctions on $2^\om$ which does not admit $\sigma$-continuous uniformizations are known in Computability Theory.
Here are some examples:

\begin{example}\label{example:non-uniformizable}
\hfill
\begin{enumerate}
\item There is a $\mu$-positive compact set in a probability space $(\mathcal{X},\mu)$ which does not admit a $\sigma$-continuous uniformization.
For instance, 
\[\{(x,y)\in 2^\om\times 2^\om:K^x(y\upr n)\geq n-1\}\]
 is such a set with respect to the product measure obtained by fair coin tossing, where $K^x(\sigma)$ is the prefix-free Kolmogorov complexity of a binary string $\sigma$ relative to an oracle $x$ (that is, $K^x(\sigma)$ is the length of a shortest program in some fixed programming language describing the string $\sigma$ with the help of the oracle $x$; see \cite{NiesBook}).
See also Brattka-Gherardi-H\"olzl \cite{BGR15}.
\item There is a compact set $\mathcal{K}\subseteq 2^\om\times[0,1]^2$ such that $\mathcal{K}(x)$ is a nonempty contractible dendroid (arcwise connected hereditarily unicoherent continuum) for any $x\in 2^\om$ which does not admit a $\sigma$-continuous uniformization.
See Kihara \cite{Kihara12b}.
\end{enumerate}
\end{example}

\noindent There are also a large number of interesting examples of compact-graph multifunctions on $2^\om$ which admit $\sigma$-continuous uniformizations.
Note that if a compact set $\mathcal{K}\subseteq 2^\om\times 2^\om$ admits a $\sigma$-continuous uniformization, then it admits a $G_\delta$-piecewise continuous uniformization as well (see Proposition \ref{prop:HigKih}).

\begin{example}
\hfill
\begin{enumerate}
\item Let ${\sf IVT}(x)$ be the interval coded by a $\mathbf{\Pi^0_1}$-code $x\in\om^\om$ (here recall that, in descriptive set theory, we usually code a Borel set in a Polish space by using a point in Baire space $\om^\om$).
Then it is known that the set-valued function $x\mapsto{\sf IVT}(x)$ has a $\sigma$-continuous uniformization, but has no continuous uniformization.
In the context of Computable Analysis, the uniformization problem of ${\sf IVT}$ is closely related to computability-theoretic analysis of the Intermediate Value Theorem.
See \cite{BraGui11b}.
\item Given a rapidly converging Cauchy sequence $x=(q_n)_{n\in\om}\in\mathbb{Q}^\om$, let ${\sf BE}(x)$ be the set of all binary expansions of the real $r=\lim_nq_n$.
Then ${\sf BE}$ has a $\sigma$-continuous uniformization, but has no continuous uniformization.
\end{enumerate}

\noindent Note also that the above two examples admit both a Baire-one uniformization and a $\sigma$-continuous uniformization; however they do not admit a Baire-one $\sigma$-continuous uniformization.
\end{example}

\subsection{Co-Wadge Reducibility}

In this section, we propose various reducibility notions to compare {\em degrees of difficulty} of uniformization problems.
There are several natural ways of introducing a notion of reducibility among uniformization problems, e.g., one can adopt Wadge reducibility and Weihrauch reducibility for this purpose.
In this article, we will combine these reducibility notions with Borel-piecewise continuity.
Let $\mathcal{K}$ be a class of functions, e.g., continuous functions, $G_\delta$-piecewise continuous functions, and $\sigma$-continuous functions.
Here we assume that $\mathcal{K}$ absorbs continuous (or computable) functions in the sense that for any continuous (or computable) functions $\varphi$ and $\psi$, if $\theta$ is a $\mathcal{K}$-function, then so is $x\mapsto \varphi(x,\theta\circ \psi(x))$.
In other words, $\mathcal{K}$ forms a lower cone in the (continuous) Weihrauch degrees.

For two subsets $A,B\subseteq\mathcal{X}$ of a topological space $\mathcal{X}$, we say that $A$ {\em is $\mathcal{K}$-Wadge reducible to} $B$ if there is a $\mathcal{K}$-function $\theta:\mathcal{X}\to\mathcal{X}$ such that $A=\theta^{-1}[B]$.
If we think of a subset of $\mathcal{X}$ as a $\{0,1\}$-valued function on $\mathcal{X}$, then the equation $A=\theta^{-1}[B]$ is equivalent to $A=B\circ\theta$.
Thus, it is natural to say that for functions $f:\mathcal{X}_0\to\mathcal{Y}$ and $g:\mathcal{X}_1\to\mathcal{Y}$, {\em $f$ is $\mathcal{K}$-Wadge reducible to $g$} if there is a $\mathcal{K}$-function $\theta:\mathcal{X}_0\to\mathcal{X}_1$ such that $f=g\circ\theta$.

We further extend $\mathcal{K}$-Wadge reducibility to uniformization problems.
Let us first consider the following uniformization problem ${\sf
  Fib}(g)$ for a function $g:\mathcal{X}_1\to\mathcal{Y}$:\medskip

\begin{quote}
 Find $s:\mathcal{B}\to\mathcal{X}_1$ such that $s(y)\in g^{-1}(y)$ for all $y\in\mathcal{B}$, where $\mathcal{B}$ is the image of $\mathcal{X}_1$ under $g$.
\end{quote}\medskip

\noindent It is not hard to check that $f$ is $\mathcal{K}$-Wadge reducible to $g$ if and only if there is a $\mathcal{K}$-function $\theta:\mathcal{X}_0\to\mathcal{X}_1$ such that for any solution $s$ to ${\sf Fib}(f)$, $\theta\circ s$ is a solution to ${\sf Fib}(g)$, that is, one can show the following:

\begin{prop}\label{prop:cowadge}
For functions $f:\mathcal{X}_0\to\mathcal{Y}$ and $g:\mathcal{X}_1\to\mathcal{Y}$, $f$ is $\mathcal{K}$-Wadge reducible to $g$ if and only if there is a $\mathcal{K}$-function $\theta:\mathcal{X}_0\to\mathcal{X}_1$ such that
\[(\forall s:\mathcal{B}\to\mathcal{X}_0)\;[s\mbox{ uniformizes }f^{-1}\;\Longrightarrow\;\theta\circ s\mbox{ uniformizes }g^{-1}].\]
\end{prop}

\begin{proof}
If $f$ is $\mathcal{K}$-Wadge reducible to $g$, then there is a $\mathcal{K}$-function $\theta$ such that $f(x)=y$ if and only if $g(\theta(x))=y$ for all $x,y$; therefore $y\in f^{-1}(x)$ if and only if $y\in g^{-1}(\theta(x))$.
This $\theta$ clearly satisfies the desired condition.
Conversely, suppose that we have a $\mathcal{K}$-function $\theta$ transforming a uniformization of $f^{-1}$ into that of $g^{-1}$.
For any $x$, if $f(x)=y$ then consider a uniformization $s$ satisfying $s(y)=x$.
Then we have $\theta(s(y))=\theta(x)\in g^{-1}(y)$.
This implies $f(x)=g(\theta(x))=y$ for any $x$ and $y$.
\end{proof}

Based on this observation, for multi-valued functions $F:\mathcal{X}\rightrightarrows\mathcal{Y}_0$ and $G:\mathcal{X}\rightrightarrows\mathcal{Y}_1$, we say that {\em $F$ is $\mathcal{K}$-coWadge reducible to $G$} if there is a $\mathcal{K}$-function $\theta:\mathcal{Y}_1\to\mathcal{Y}_0$ such that
\[(\forall \psi:\mathcal{X}\to\mathcal{Y}_1)\;[\psi\mbox{ uniformizes }G\;\Longrightarrow\;\theta\circ \psi\mbox{ uniformizes }F].\]

One can also extend the notion of $\mathcal{K}$-Wadge reducibility.
We say that {\em $F:\mathcal{X}_0\rightrightarrows\mathcal{Y}$ is $\mathcal{K}$-Wadge reducible to $G:\mathcal{X}_1\rightrightarrows\mathcal{Y}$} if there is a $\mathcal{K}$-function $\theta:\mathcal{X}_0\to\mathcal{X}_1$ such that
\[(\forall \psi:\mathcal{X}_0\to\mathcal{Y})\;[\psi\mbox{ uniformizes }F\;\Longrightarrow\;\psi\mbox{ uniformizes }G\circ\theta].\]

As in the proof of Proposition \ref{prop:cowadge}, one can see the one-to-one correspondence of the dual $\mathcal{K}$-Wadge degrees and the $\mathcal{K}$-coWadge degrees:

\begin{prop}
The $\mathcal{K}$-Wadge degrees and the $\mathcal{K}$-coWadge degrees of multi-valued functions are dually isomorphic via the one-to-one correspondence $F\mapsto F^{-1}$.\qed
\end{prop}

One can also see that the $\mathcal{K}$-Wadge degrees and the $\mathcal{K}$-coWadge degrees of multi-valued functions with compact graphs are dually isomorphic as well since (the graphs of) $F$ and $F^{-1}$ are homeomorphic.
The following result states that if we restrict our attention to compact-graph multi-functions, there is no need to consider a class of functions larger than $G_\delta$-piecewise continuous functions.

\begin{prop}[see Higuchi-Kihara {\cite[Proposition 23]{HigKih14I}}]\label{prop:HigKih}
Suppose that ${F},{G}\subseteq 2^\om\times 2^\om$ are compact.
Then ${F}$ is $\sigma$-continuously coWadge reducible to ${G}$ if and only if ${F}$ is $G_\delta$-piecewise continuously coWadge reducible to ${G}$.
\qed
\end{prop}

It is also natural to consider more powerful reductions among uniformization problems.
We say that {\em $F$ is weakly $\mathcal{K}$-coWadge reducible to $G$} if there is a $\mathcal{K}$-function $k:\mathcal{X}\times\mathcal{Y}_1\to\mathcal{Y}_0$ such that
\[(\forall\psi:\mathcal{X}\to\mathcal{Y}_1)\;[\psi\mbox{ uniformizes }G\;\Longrightarrow\;k\circ\langle{\rm id},\psi\rangle\mbox{ uniformizes }F],\]
that is, $y\in G(x)$ implies $k(x,y)\in F(x)$.

\begin{prop}\label{prop:single-cowadge}
There is an order-reversing embedding of the weak $\mathcal{K}$-coWadge degrees of multi-valued functions into the $\mathcal{K}$-Wadge degrees of single-valued functions.
\end{prop}

Indeed, the weak $\mathcal{K}$-coWadge degrees of multi-valued functions are dually isomorphic to the $\mathcal{K}$-Wadge degrees of {\em trivial bundles}.
For a continuous surjection $\pi:\mathcal{E}\to\mathcal{B}$ from a topological space $\mathcal{E}$ onto another topological space $\mathcal{B}$, the triple $(\mathcal{E},\mathcal{B},\pi)$ is called a {\em bundle}.
A {\em (global) section} of a bundle $(\mathcal{E},\mathcal{B},\pi)$ is a right-inverse of $\pi$, i.e., a map $s:\mathcal{B}\to\mathcal{E}$ such that $\pi\circ s={\rm id}_\mathcal{B}$.
Note that the {\em section-finding problem} is exactly the same as the uniformization problem ${\sf Fib}(\pi)$, since $s$ is a section if and only if $s(y)\in \pi^{-1}(y)$ for all $y\in\mathcal{B}$.
For a multi-valued function $F\subseteq\mathcal{X}\times Y$, the triple $(F,{\rm dom}(F),\pi_F)$ forms a bundle, where $\pi_F(x,y)=x$ for every $(x,y)\in F$.
Such a triple is called a {\em trivial bundle}.
Note that a section of a trivial bundle $\pi_F$ corresponds to the {\em cylinderification} of a uniformization of $F$.

\begin{proof}[Proof of Proposition \ref{prop:single-cowadge}]
We claim that $F$ is weakly $\mathcal{K}$-coWadge reducible to $G$ if and only if $\pi_G$ is $\mathcal{K}$-Wadge reducible to $\pi_F$.
Let $k\in\mathcal{K}$ witness that $F$ is weakly $\mathcal{K}$-coWadge reducible to $G$.
Then we have $\pi_G(x,y)=\pi_F(x,k(x,y))$ since $(x,y)\in{\rm dom}(\pi_G)=G$, i.e., $y\in G(x)$ implies $k(x,y)\in F(x)$.
Therefore, $k_0:x\mapsto(x,k(x,y))$ witnesses that $\pi_G$ is $\mathcal{K}$-Wadge reducible to $\pi_F$.
Conversely, let $k\in\mathcal{K}$ be a $\mathcal{K}$-Wadge reduction from $\pi_G$ to $\pi_F$, and let $\psi$ be a uniformization of $G$.
Given $x$, if $\psi(x)\in G(x)$, and therefore $\pi_G(x,\psi(x))=\pi_F\circ k(x,\psi(x))=x$.
Note that $k(x,\psi(x))\in\mathcal{X}\times\mathcal{Y}_0$ where $\mathcal{X}$ and $\mathcal{Y}_0$ are the domain and the codomain of $F$, respectively.
Thus $k(x,\psi(x))$ is of the form $(k_0(x,\psi(x)),k_1(x,\psi(x)))$ and moreover $k_0(x,\psi(x))=x$ since $\pi_F\circ k(x,\psi(x))=x$.
Therefore we have $k_1(x,\psi(x))\in F(x)$, that is, $k_1$ witnesses that $F$ is weakly $\mathcal{K}$-coWadge reducible to $G$.
\end{proof}

In particular, the $\mathcal{K}$-coWadge degrees of compact-graph multifunctions are embedded into the dual of the $\mathcal{K}$-Wadge degrees of single-valued functions with compact domains via the map $F\mapsto\pi_F$.

Finally, we introduce the notion of Weihrauch reducibility, which has already been employed to classify numerous individual uniformization problems in classical analysis and related areas (see \cite{BraGui11b,BGR15,BGM12}).
Let $\mathcal{H}$ and $\mathcal{K}$ be classes of functions.
For multi-valued functions $F:\mathcal{X}_0\rightrightarrows\mathcal{X}_1$ and $G:\mathcal{Y}_0\rightrightarrows\mathcal{Y}_1$, we say that {\em $F$ is $(\mathcal{K},\mathcal{H})$-Weihrauch reducible to $G$} if and only if there are an $\mathcal{H}$-function $h:\mathcal{X}_0\to\mathcal{X}_1$ and a $\mathcal{K}$-function $k:\mathcal{X}_0\times\mathcal{Y}_1\to\mathcal{Y}_0$ such that
\[(\forall\psi:\mathcal{X}_1\to\mathcal{Y}_1)\;[\psi\mbox{ uniformizes }G\;\Longrightarrow\;k\circ\langle{\rm id},\psi\circ h\rangle\mbox{ uniformizes }F],\]
that is, $y\in G(h(x))$ implies $k(x,y)\in F(x)$.

Given a bundle $\pi:\mathcal{E}_1\to \mathcal{B}_1$ and an $\mathcal{H}$-function $h:\mathcal{B}_0\to \mathcal{B}_1$, the {\em pullback bundle} $(h^\ast\mathcal{E}_1,\mathcal{B}_0,h^\ast\pi)$ is the pullback of morphisms $\pi$ and $h$ together with the base space $\mathcal{B}_0$ and the projection $h^\ast\pi:h^\ast \mathcal{E}_1\to \mathcal{B}_0$, that is,
\[h^\ast\mathcal{E}_1=\{(x,y)\in \mathcal{B}_0\times \mathcal{E}_1:h(x)=\pi(y)\},\]
where the projection is $h^\ast\pi:(x,y)\mapsto x$.
Then, as in the proof of Proposition \ref{prop:single-cowadge}, one can see that $F$ is $(\mathcal{K},\mathcal{H})$-Weihrauch reducible to $G$ if and only if there is an $\mathcal{H}$-function $h:\mathcal{B}_0\to\mathcal{B}_1$ such that the projection $h^\ast\pi_G$ (in the pullback bundle) is $\mathcal{K}$-Wadge reducible to $\pi_F$.

One can also show analogous results of Proposition \ref{prop:HigKih} for weak $\mathcal{K}$-co-Wadge reducibility and $(\mathcal{K},\mathcal{H})$-Weihrauch reducibility, that is, there is no need to think about a class $\mathcal{K}$ of functions strictly larger than that of $G_\delta$-piecewise continuous functions; however note that it is not true for $\mathcal{H}$.

\begin{remark}
\hfill
\begin{enumerate}
\item Wadge \cite{Wadge83} introduced the notion of $\mathcal{C}$-Wadge reducibility and $\mathcal{L}$-Wadge reducibility for subsets of $\om^\om$ where $\mathcal{C}$ and $\mathcal{L}$ are the classes of continuous functions and Lipschitz functions.
The notion of $\mathcal{B}$-Wadge reducibility for Borel functions $\mathcal{B}$ is introduced by Andretta-Martin \cite{AndMar03}, and $\mathcal{B}_1^\ast$-Wadge reducibility for first-level Borel functions $\mathcal{B}_1^\ast$ (which are equivalent to closed-piecewise continuous functions by the Jayne-Rogers Theorem \cite{JR82}, and also to Baire-one-star functions) by Andretta \cite{And06b}.
For $\mathcal{K}$-Wadge reducibility with respect to other classes $\mathcal{K}$, see also Motto Ros \cite{MRos09,MRos10,MRos14} and Motto Ros-Schlicht-Selivanov \cite{MSS15}
\item The notion of $\mathcal{K}$-coWadge reducibility for various kinds of classes $\mathcal{K}$ of $\sigma$-computable functions (e.g., $\Pi^0_1$-piecewise computable functions) is first introduced by the author in his master's thesis to develop intermediate notions between Medvedev reducibility and Muchnik reducibility for mass problems, and essentially the same notion is further developed by Kihara \cite{Kihara12} and Higuchi-Kihara \cite{HigKih14I,HigKih14II}.
\item If both $\mathcal{K}$ and $\mathcal{H}$ are the sets of all computable functions, then the notion of $(\mathcal{K},\mathcal{H})$-Weihrauch reducibility is known as Weihrauch reducibility \cite{BraGui11a}, and widely studied in Computable Analysis to classify $\Pi_2$ theorems in classical mathematics \cite{BraGui11b,BGR15,BGM12}.
The notion of $(\mathcal{K},\mathcal{H})$-Weihrauch reducibility for $\mathcal{K}=\mathcal{H}=\mathcal{C}$ is also known as continuous Weihrauch reducibility.
If both $\mathcal{K}$ and $\mathcal{H}$ are the sets of all {\em $\sigma$-computable} functions (see Section \ref{section:main-computable}), then the notion of $(\mathcal{K},\mathcal{H})$-Weihrauch reducibility is known as {\em computable reducibility}, which is introduced by Dzhafarov \cite{Dzh15} (see also Hirschfeldt-Jockusch \cite{HJta}).
See also \cite{Pau15} for the category-theoretic view, and \cite{NobPau} for the relationship with the Wadge degrees.
\item 
This kind of use of a fibration is standard in categorical logic (see Jacobs \cite{Jac99}).
Especially, the above interpretation of Weihrauch reducibility in the setting of a fibration is first explicitly introduced by Yoshimura \cite{Yos1,Yos2}.
\end{enumerate}
\end{remark}

\subsection{Borel-Piecewise Continuity}

We now begin to develop a fine structure of $\sigma$-continu\-ous functions.
The notion of $\mathbf{\Gamma}$-piecewise continuity introduced in Section \ref{section:history} provides us a way of measuring the complexity of functions.
More specifically, the complexity of a $\sigma$-continuous Borel function can be defined as the least Borel complexity of a decomposition making the function be continuous.
For instance, Dirichlet's nowhere continuous function $\chi_\mathbb{Q}$ is obviously $G_\delta$-piecewise continuous, but not closed-piecewise continuous, so one can say that the Borel complexity of Dirichlet's function is exactly $2$.
One can also classify $\sigma$-continuous functions on the basis of the least cardinality of such a decomposition (see also \cite{Pau10}).
Indeed, Dirichlet's function is $G_\delta$-$2$-wise continuous, where, a function $f:X\to Y$ is {\em $\mathbf{\Gamma}$-$n$-wise continuous} if there is a $\mathbf{\Gamma}$-cover $\{X_k\}_{k<n}$ of $X$ such that $f\upr X^{\sf diff}_k$ is continuous, where $X^{\sf diff}_k=X_k\setminus\bigcup_{j<k}X_{j}$, for every $k<n$.
As another example of a $G_\delta$-$n$-wise continuous function, it is known in topological dimension theory that there is a $G_\delta$-$(n+1)$-wise embedding of $\mathbb{R}^n$ into $2^\omega$ whereas there is no $G_\delta$-$n$-wise embedding of $\mathbb{R}^n$ into $2^\omega$ (see \cite{vMillBook}).

However, this viewpoint is too coarse for our purpose.
For instance, closed-piecewise continuous functions are naturally classified in the context of the transfinite mind-change hierarchy \cite{deBre14,FreSmi93} (or equivalently, the hierarchy of Baire-one-star functions \cite{Malek06}); therefore, a decomposition of a function should be allowed to form a well-founded tree.
Indeed, Selivanov \cite{Seli07} found that, for $k\in\om$, the Wadge degrees of $\mathbf{\Delta}^0_2$-measurable $k$-valued functions $f:\om^\om\to k$ (which are indeed closed-piecewise continuous since their values have only finitely many possibilities) can be completely captured by using $k$-labeled countable forests with no infinite chains up to homomorphism.
Such a forest illustrates a dynamic process approximating a closed-piecewise continuous function $f$.
However, such a viewpoint involving a complete classification is now too complicated to analyze functions $f:\om^\om\to\om^\om$, so we here take a bit coarser standpoint.

We keep thinking about a well-founded tree illustrating a dynamic process defining a $\sigma$-continuous Borel function.
A $\mathbf{\Gamma}$-piecewise continuous function in the sense of Section \ref{section:history} is controlled by a conditional branching described by a $\mathbf{\Gamma}$ formula.
The flowchart of this control process is represented as a (possibly infinitely branching) tree $T$ of height $2$, where the root of $T$ is labeled by a $\mathbf{\Gamma}$ formula, and each leaf (terminal node) of $T$ is labeled by a partial continuous function.

\begin{example}[see Figure \ref{fig:flowchart1}]
The tree associated with Dirichlet's function turns out to be $T=\{\langle\rangle,\langle 0\rangle,\langle 1\rangle\}$, and the root node $\langle\rangle$ asks whether a given input $x\in\mathbb{R}$ is an irrational or not.
If $x$ is rational, the algorithm goes to the righthand node $\langle 1\rangle$, and returns $1$ (that is, the constant function $1$ is assigned to the node $\langle 1\rangle$), and if $x$ is irrational, go to the lefthand node $\langle 0\rangle$, and return $0$ (that is, the constant function $0$ is assigned to the node $\langle 0\rangle$).
In other words, Dirichlet's function consists of the tree $T$ and the $\Pi^0_2$ formula on the root $\langle\rangle$ described above, and two constant functions $x\mapsto 0$ and $x\mapsto 1$ on leaves $\langle 0\rangle$ and $\langle 1\rangle$ respectively.
\end{example}

\begin{figure}[t]
\centering
\includegraphics[scale=1]{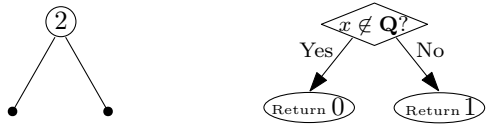}
\caption{\footnotesize (Left) A labeled well-founded tree for $G_\delta$-$2$-wise continuity; (Right) A flowchart defining Dirichlet's function $\chi_\mathbb{Q}$.}
\label{fig:flowchart1}
\end{figure}

Now it is natural to consider any well-founded tree $T\subseteq\om^{<\om}$.
Assume that each non-terminal node $\sigma\in T$ is labeled by some ordinal ${\rm rk}_T(\sigma)$ which specifies the Borel complexity of a question which can be arranged on the node $\sigma$.
Then, as before, we assign each non-terminal node $\sigma$ of $T$ to a Borel question of Borel rank ${\rm rk}_T(\sigma)$, and each terminal node of $T$ to a continuous function.
We think of this assignment on a tree as a flowchart that defines a {\em nested-piecewise continuous} function.

\begin{definition}\label{def:labeledwellfounded}
A {\em labeled well-founded tree} is a pair $\mathbf{T}=(T,{\rm rk}_T)$ of a well-founded tree $T\subseteq\om^{<\om}$ and an ordinal-valued function ${\rm rk}_T:T\to\om_1$.
A {\em flowchart} on a labeled well-founded tree $(T,{\rm rk}_T)$ is a collection $\Lambda=(P_\sigma,f_\rho)_{\sigma\in T,\rho\in T^{\rm leaf}}$ satisfying the properties (1), (2) and (3):
\begin{enumerate}
\item $P_{\pair{}}$ is a subset of $\mathcal{X}$. The set $P_{\pair{}}$ is called the {\em domain} of the flowchart $\Lambda$, and written as ${\rm dom}(\Lambda)$.
\item For every non-terminal node $\sigma\in T$, $\pair{P_\tau:\tau\in{\rm succ}_T(\sigma)}$ forms a $\mathbf{\Pi}^0_{{\rm rk}_T(\sigma)}$ cover of $P_\sigma$ (where recall that ${\rm succ}_T(\sigma)$ is the set of all immediate successors of $\sigma$ in $T$), that is, $P_\sigma\subseteq\bigcup\{P_{\sigma\fr n}:\sigma\fr n\in T\}$ and $P_{\sigma\fr n}$ is $\mathbf{\Pi}^0_{{\rm rk}_T(\sigma)}$ for every $n$.
\end{enumerate}
By the covering condition (2), we have the following property:
\[(\forall x\in{\rm dom}(\Lambda))(\exists\rho\in T^{\rm leaf})\;x\in\bigcap_{\sigma\preceq\rho}P_\sigma.\]
For every $x\in{\rm dom}(\Lambda)$, the leftmost one among such leaves $\rho$ is called the {\em true path of $\Lambda$ along $x$} and denoted by ${\rm TP}_\Lambda(x)$.
Here, we say that {\em $\sigma$ is to the left of $\tau$} (written as $\sigma\leq_{\rm left}\tau$) if either $\sigma=\tau$ or there is $n$ such that $\sigma\upr n=\tau\upr n$ but $\sigma(n)<\tau(n)$.
Then we define $D_\rho$ as the set of all $x\in{\rm dom}(\Lambda)$ such that ${\rm TP}_\Lambda(x)=\rho$.

\begin{enumerate}
\item[(3)] $f_\rho:D_\rho\to\mathcal{Y}$ is a continuous function with domain $D_\rho$ for every leaf $\rho\in T^{\rm leaf}$.
\end{enumerate}\medskip

A flowchart $\Lambda$ always defines a function $f_\Lambda:{\rm dom}(\Lambda)\to\mathcal{Y}$ as follows:
\[f_\Lambda(x)=f_{{\rm TP}_\Lambda(x)}(x).\]
\end{definition}\medskip

Intuitively, a flowchart $\Lambda$ on a labeled well-founded tree $T$ describes a (non-effective) algorithm defining the function $f_\Lambda$ as follows:
Given an input $x$, if the algorithm reaches a non-terminal node $\sigma\in T$, the flowchart $\Lambda$ asks the following:
\[\mbox{What is the {\em least} $n$ such that $x\in P_{\sigma\fr n}$?}\]
Although this question is not necessarily computably decidable, our ``algorithm'' following $\Lambda$ is allowed to be non-effective, and so always answers to this question by the correct value $n$.
Then the algorithm moves to the node $\sigma\fr n$ for such $n$.
If the algorithm reaches a terminal node $\rho\in T^{\rm leaf}$ (that is, $\rho$ is the true path of $\Lambda$ along $x$), then it returns the output $f_\rho(x)$.

\begin{definition}
Let $\mathbf{T}=(T,{\rm rk}_T)$ be a labeled well-founded tree.
A function $f:\mathcal{X}\to\mathcal{Y}$ is {\em $\mathbf{T}$-piecewise continuous} if there is a flowchart $\Lambda$ on $\mathbf{T}$ such that $f=f_\Lambda$.
\end{definition}

For a countable ordinal $\xi$, a labeled well-founded tree $\mathbf{T}=(T,{\rm rk}_T)$ is of {\em Borel rank $(\xi,\eta)$} if ${\rm rk}_T(\sigma)\leq \xi$ for all infinitely branching nodes $\sigma\in T$, and ${\rm rk}_T(\sigma)\leq\eta$ for all finitely branching nodes $\sigma\in T$.
We mainly focus on labeled well-founded trees $\mathbf{T}$ of Borel rank $(1,2)$.
It is clear that $\mathbf{T}$-piecewise continuity implies $G_\delta$-piecewise continuity whenever $\mathbf{T}$ is of Borel rank $(1,2)$.

\begin{remark}
One can also introduce piecewise continuity on a labeled directed graph (which can be represented by a labeled tree $T$ not necessarily well-founded) by declaring that $f_\Lambda(x)$ is undefined whenever the algorithm following $\Lambda$ on an input $x$ never reaches a halting state (i.e., a leaf of $T$).
This generalization seems natural in computer science, e.g., a Blum-Shub-Smale (BSS) machine \cite{BSS89} has a power to answer to a noncomputable $\Pi^0_1$ question of the form ``$x=0$?'', and the original definition of a BSS computation is clearly given by a flowchart on a labeled directed graph.
See also Neumann-Pauly \cite{NePa16}.
\end{remark}

\subsection*{Weihrauch Reduction}

Given a labeled well-founded tree $\mathbf{T}$, one can define the associated reducibility notions by using $\mathbf{T}$-piecewise continuity.
By $\mathbf{T}\mathcal{C}$ we denote the class of $\mathbf{T}$-piecewise continuous functions, and we often omit the symbol $\mathcal{C}$ when we mention associated reducibility notions, e.g., $\mathbf{T}\mathcal{C}$-Wadge reducibility is often abbreviated as $\mathbf{T}$-Wadge reducibility.
Here the class of $\mathbf{T}$-piecewise continuous functions is not necessarily closed under composition; therefore, for instance, the $\mathbf{T}$-Wadge ordering may not be transitive.
If we hope to recover transitivity of the associated orderings, we have to consider a collection of labeled well-founded trees.
However, even if it does not satisfy transitivity, the understanding of the associated reducibility still has a consequence in the context of Weihrauch degrees.

Let us first consider the labeled well-founded tree $\mathbf{T}_{\xi,2}=(T,{\rm rk}_T)$ defining $\mathbf{\Pi}^0_\xi$ $2$-wise continuity, that is, $T=\{\langle\rangle,\langle 0\rangle,\langle 1\rangle\}$ and ${\rm rk}_T(\langle\rangle)=\xi$.
One may notice that $\mathbf{T}_{1,2}$-piecewise continuity (i.e., closed $2$-wise continuity) has some connection with the {\em limited principle of omniscience} (the {\em law of excluded middle for $\Sigma^0_1$ formulas}) in constructive analysis.
In the context of Weihrauch degrees, the limited principle of omniscience is interpreted by the function ${\sf LPO}:\om^\om\to 2$ defined by ${\sf LPO}(x)=0$ if $x(n)=0$ for some $n\in\om$; otherwise ${\sf LPO}(x)=1$.
Clearly ${\sf LPO}$ is closed $2$-wise continuous, and conversely, it is not hard to check that every closed $2$-wise continuous function $g$ is of the form $k\circ\langle{\rm id},{\sf LPO}\circ h\rangle$ for some continuous functions $h,k$, that is, $g$ is continuously Weihrauch reducible to ${\sf LPO}$.
We generalize this observation to any labeled well-founded tree.

\begin{prop}\label{prop:chara-label}
Let $\mathbf{T}$ be a labeled well-founded tree.
For a single-valued function $f$, the following are equivalent:
\begin{enumerate}
\item $f$ is $\mathbf{T}$-piecewise continuous.
\item $f$ is coWadge reducible to ${\rm TP}_\Lambda$ for some flowchart $\Lambda$ on $\mathbf{T}$.
\item $f$ is continuously Weihrauch reducible to ${\rm TP}_\Lambda$ for some flowchart $\Lambda$ on $\mathbf{T}$.
\end{enumerate}
\end{prop}

\begin{proof}
To see the implication (1)$\Rightarrow$(2), assume that $f$ is $\mathbf{T}$-piecewise continuous.
Then there is a flowchart $\Lambda=(P_\sigma,f_\rho)$ on $\mathbf{T}$ such that $f=f_\Lambda$.
Define $k(x,\rho)=f_\rho(x)$.
Note that $k$ is continuous since each $f_\rho$ is continuous and $T^{\rm leaf}$ is countable.
It is not hard to see that $f(x)=k(x,{\rm TP}_\Lambda(x))$.
The implication from (2) to (3) is obvious.
To see the implication (3)$\Rightarrow$(1), we assume that $f$ is continuously Weihrauch reducible to ${\rm TP}_\Lambda$ for some flowchart $\Lambda=(P_\sigma,f_\rho)$ on $\mathbf{T}$, that is, there are continuous functions $h,k$ such that $f(x)=k(x,{\rm TP}_\Lambda(h(x)))$ for all $x$.
Define $g_\rho(x)=k(x,\rho)$ and consider the flowchart $\Lambda^\ast=(h^{-1}[P_\sigma],g_\rho)$.
Note that the continuous preimage does not increase the Borel complexity of a set; therefore $\Lambda^\ast$ is a flowchart on $\mathbf{T}$.
It is not hard to see that ${\rm TP}_{\Lambda^\ast}(x)={\rm TP}_\Lambda(h(x))$.
Consequently, $f_{\Lambda^\ast}(x)=k(x,{\rm TP}_\Lambda(h(x)))=f(x)$ as desired.
\end{proof}

One can also show the similar result for multi-valued functions.
For multi-valued functions $F$ and $G$, the composition $G\circ F$ is defined as follows:
\begin{align*}
{\rm dom}(G\circ F)&=\{x\in{\rm dom}(F):F(x)\subseteq{\rm dom}(G)\},\\
y\in G\circ F(x)\;&\iff\;(\exists z)\;[z\in F(x)\mbox{ and }y\in G(z)].
\end{align*}
Then, the {\em sequential composition} $G\star F$ (see \cite{BGM12,BraPau12}) is defined as a multi-valued function realizing the greatest Weihrauch degree among those of multi-valued functions of the form $G_0\circ F_0$ such that $G_0$ and $F_0$ are Weihrauch reducible to $G$ and $F$, respectively.

\begin{prop}\label{prop:weihrauch}
Let $\mathbf{T}$ be a labeled well-founded tree, and let $F$ and $G$ be multi-valued functions.
Then, the following are equivalent:
\begin{enumerate}
\item $F$ is $(\mathbf{T},\mathcal{C})$-Weihrauch reducible to $G$.
\item $F$ is continuously Weihrauch reducible to ${\rm TP}_\Lambda\star G$ for some flowchart $\Lambda$ on $\mathbf{T}$.
\end{enumerate}
\end{prop}

\begin{proof}
Assume that $F$ is $(\mathbf{T},\mathcal{C})$-Weihrauch reducible to $G$.
Then there are a $\mathbf{T}$-piecewise continuous function $k$ and a continuous function $h$ such that for any $x$, whenever $y\in G(h(x))$, we have $k(x,y)\in F(x)$.
We consider $h_0(x)=(x,h(x))$ and $G_0(x,y)=\{x\}\times G(y)=\{(x,z):z\in G(y)\}$.
Clearly $h_0$ is continuous, and $G_0$ is Weihrauch reducible to $G$.
Moreover, by Proposition \ref{prop:chara-label}, $k$ is continuously Weihrauch reducible to ${\rm TP}_\Lambda$ for some flowchart $\Lambda$ on $\mathbf{T}$.
We claim that $F$ is Weihrauch reducible to $k\circ G_0$ via $k_0={\rm id}$ and $h_0$, that is, $z\in k\circ G_0(h_0(x))$ implies $z\in F(x)$.
We note that $G_0(h_0(x))=G_0(x,h(x))=\{x\}\times G(h(x))$.
Therefore, if $z\in k\circ G_0(h_0(x))$, then there is $y\in G(h(x))$ such that $z=k(x,y)$.
Then, by our choice of $h$ and $k$, we have $z\in F(x)$ as desired.

Conversely, assume that $F$ is continuously Weihrauch reducible to ${\rm TP}_\Lambda\star G$ for some flowchart $\Lambda$ on $\mathbf{T}$, that is, there are $k^\ast$ and $G^\ast$ such that $k^\ast$ is Weihrauch reducible to ${\rm TP}_\Lambda$ and $G^\ast$ is Weihrauch reducible to $G$, and moreover there are continuous functions $h_1,k_1$ such that for any $x$, the condition $y\in k^\ast\circ G^\ast(h_1(x))$ implies $k_1(x,y)\in F(x)$.
Note that $k^\ast$ can be assumed to be single-valued since ${\rm TP}_\Lambda$ is single-valued, and therefore, a Weihrauch reduction to ${\rm TP}_\Lambda$ gives a uniformization of $k^\ast$.
Thus, every $y\in k^\ast\circ G^\ast(h_1(x))$ is of the form $k^\ast(z)$ for some $z\in G^\ast(h_1(x))$.
Therefore, $z\in G^\ast(h_1(x))$ implies $k_1(x,k^\ast(z))\in F(x)$.
Let $k_2$ and $h_2$ be continuous functions witnessing that $G^\ast$ is Weihrauch reducible to $G$.
Then we get that $y\in G(h_2\circ h_1(x))$ implies $k_1(x,k^\ast\circ k_2(h_1(x),y))\in F(x)$.
Since $k_1$, $k_2$, and $h_1$ are continuous, it is clear that the function $k^{\ast\ast}$ defined by $k^{\ast\ast}(x,y)=k_1(x,k^\ast\circ k_2(h_1(x),y))$ is continuously Weihrauch reducible to $k^\ast$.
By Proposition \ref{prop:chara-label}, $k^{\ast\ast}$ is $\mathbf{T}$-piecewise continuous, and therefore, $F$ is $(\mathbf{T},\mathcal{C})$-Weihrauch reducible to $G$ via $k^{\ast\ast}$ and $h_2\circ h_1$.
\end{proof}

\begin{example}\label{example:Weihrauch-principle}
By Proposition \ref{prop:weihrauch} and by the previous discussion, $F$ is $(\mathbf{T}_{1,2},\mathcal{C})$-Weihrauch reducible to $G$ if and only if $F$ is continuously Weihrauch reducible to ${\sf LPO}\star G$.
We also have similar connections between $\mathbf{T}_{1,\om}$ (closed-piecewise continuity) and the closed choice principle $\mathsf{C}_\mathbb{N}$ on the natural numbers, and between $\mathbf{T}_{2,2}$ ($G_\delta$-$2$-wise continuity) and the jump ${\sf LPO}'$ of ${\sf LPO}$ (see \cite{BGM12} for the jump of a multi-valued function).
\end{example}

Later, for a given suitable collection $\mathbb{V}$ of labeled well-founded trees, we will construct a labeled well-founded tree $\mathbf{T}(\mathbb{V}')$ defining a class of piecewise continuous functions not much larger than the class defined by $\mathbb{V}$.
By using the relationship between piecewise continuity and sequential composition obtained from Proposition \ref{prop:weihrauch}, we will prove a separation result of the following form:
Given suitable $\Pi^0_1$ uniformization problems $\mathcal{S}$ and $\mathcal{U}$ on $2^\om$ which do not admit $\sigma$-continuous uniformizations, one can construct another $\Pi^0_1$ uniformization problem $\mathcal{T}$ on $2^\om$ such that
\begin{enumerate}
\item $\mathcal{S}$ is Weihrauch reducible to ${\rm TP}_{\Lambda'}\star\mathcal{T}$ for some flowchart $\Lambda'$ on $\mathbf{T}(\mathbb{V}')$.
\item $\mathcal{U}$ is not Weihrauch reducible to ${\rm TP}_{\Lambda}\star\mathcal{T}$ for any flowchart $\Lambda$ on $\mathbf{T}\in\mathbb{V}$.
\end{enumerate}

\subsection*{Vein-Piecewise Continuity}\label{section:vein-piecewise}

Hereafter, we will not care about the number of branches of a finitely branching node of a labeled well-founded tree $\mathbf{T}$, that is, we will only specify the type of a node: a leaf, a finitely branching node, or an infinitely branching node.
For instance, consider $\mathbf{\Pi}^0_\xi$-finite-piecewise-continuity, where we say that a function is {\em $\mathbf{\Pi}^0_\xi$-finite-piecewise continuous} if it is $\mathbf{\Pi}^0_\xi$-$k$-wise continuous for some $k$.
Such a notion corresponds to the countable collection $\mathbf{T}_{\xi,<\om}=\{\mathbf{T}_{\xi,k}:k\in\om\}$ such that $\mathbf{T}_{\xi,k}$ corresponds to $\mathbf{\Pi}^0_\xi$-$k$-wise continuity, i.e., a tree of height $2$ whose root is $k$-branching and labeled by $\xi$.

We introduce a single tree $\mathbb{V}_\xi$ (called a {\em vein}) generating the collection $\mathbf{T}_{\xi,<\om}$.
Let us think of $0$, $1$, and $\om$ just as symbols indicating that it is a ``leaf'', ``{\em finitely branching}'', and ``infinitely branching'', respectively.
In particular, we treat a $1$-branching node (that is, a non-terminal non-branching node) as if it were a finitely branching node.
Let $\mathbb{V}_\xi=\{\langle\rangle,\langle\ast\rangle\}$ be the tree whose root is labeled by $\xi$.
The root is $1$-branching, so it is ``finitely branching'', and moreover labeled by $\xi$.
Thus, we regard that $\mathbb{V}_\xi$ represents $\mathbf{\Pi}^0_\xi$-finite-piecewise continuity.

We now introduce the formal definition.
For a tree $T\subseteq\om^{<\om}$ and a string $\sigma\in T$, by ${\rm br}_T(\sigma)$ we denote the number of immediate successors of $\sigma$ in $T$.

\begin{definition}
A {\em vein} is a labeled well-founded tree $\mathbb{V}=(\mathcal{V},{\rm rk}_\mathcal{V})$ such that ${\rm br}_\mathcal{V}(\sigma)\in\{0,1,\om\}$ for every $\sigma\in\mathcal{V}$.
\end{definition}

The intended meaning of this notion is that a vein $\mathbb{V}$ is not only a labeled well-founded tree, but also represents the smallest collection of labeled well-founded trees including $\mathbb{V}$ itself and closed under any transformation which converts a non-branching non-terminal node $\sigma\in\mathcal{V}$ into a finitely-branching node whose successors are copies of successors of $\sigma$.
That is, the equation ${\rm br}_\mathcal{V}(\sigma)=1$ indicates that $\sigma\in\mathcal{V}$ is a finitely-branching node in $\mathcal{V}$, but the number of the immediate successors of $\sigma$ can be any finite value.
For instance, we identify the above $\mathbb{V}_\xi$ with $\mathbf{T}_{\xi,<\om}$.

\begin{figure}[t]
\centering
\includegraphics[scale=0.8]{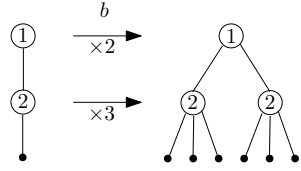}
\caption{\footnotesize The $b$-branching of the vein $\mathcal{V}=\{\langle\rangle,\langle\ast\rangle,\langle\ast,\ast\rangle\}$, where $b(\langle\rangle)=2$ and $b(\langle\ast\rangle)=3$.}
\label{fig:flowchart2}
\end{figure}

We give the formal definition of the above idea.
By $\mathcal{V}^{\sf fin}$ we denote the set of all non-terminal strings $\sigma$ such that ${\rm br}_\mathcal{V}(\sigma)<\om$.
A {\em branching function} is a function $b:\mathcal{V}^{\sf fin}\to\om$.
The role of this function is to convert each node $\sigma\in\mathcal{V}$ with ${\rm br}_\mathcal{V}(\sigma)=1$ into the $b(\sigma)$-branching node whose successors are copies of successors of $\sigma$ as mentioned above (see Figure \ref{fig:flowchart2}).
Given a vein $\mathbb{V}=(\mathcal{V},{\rm rk}_\mathcal{V})$ and a branching function $b:\mathcal{V}^{\sf fin}\to\om$, we inductively define $\mathbb{V}^b=(\mathcal{V}^b,{\rm rk}_{\mathcal{V}^b})$,  the {\em $b$-branching of $\mathbb{V}$} (see also Figure \ref{fig:flowchart2}),  with a {\em copy-source-referring function} $\iota:\mathcal{V}^b\to \mathcal{V}$ as follows:
\begin{enumerate}
\item $\pair{}\in \mathcal{V}^b$ and $\iota(\pair{})=\pair{}$.
\item If $\sigma\in \mathcal{V}^b$ and ${\rm br}_\mathcal{V}(\iota(\sigma))=1$, then $\sigma$ is converted into a $b(\iota(\sigma))$-branching node, that is,
\begin{align*}
\pair{{\rm rk}_{\mathcal{V}^b}(\sigma),{\rm br}_{\mathcal{V}^b}(\sigma)}=&\pair{{\rm rk}_\mathcal{V}(\iota(\sigma)),b(\iota(\sigma))},\\
\sigma\fr n\in \mathcal{V}^b, \quad\quad & \iota(\sigma\fr n)=\iota(\sigma)\fr\ast,
\end{align*}
for every $n<b(\sigma)$.
Here $\iota(\sigma)\fr\ast$ is the unique immediate successor of $\iota(\sigma)$ in $\mathcal{V}$.
\item If $\sigma\in \mathcal{V}^b$ and ${\rm br}_\mathcal{V}(\iota(\sigma))=\om$, then $\sigma$ remains the same as the copy source node $\iota(\sigma)$, that is,
\begin{align*}
\pair{{\rm rk}_{\mathcal{V}^b}(\sigma),{\rm br}_{\mathcal{V}^b}(\sigma)}=&\pair{{\rm rk}_\mathcal{V}(\iota(\sigma)),\om},\\
\sigma\fr n\in \mathcal{V}^b, \quad\quad & \iota(\sigma\fr n)=\iota(\sigma)\fr n,
\end{align*}
for every $n$ such that $\iota(\sigma)\fr n\in \mathcal{V}$.
\item If $\sigma\in \mathcal{V}^b$ and ${\rm br}_\mathcal{V}(\iota(\sigma))=0$, then $\sigma$ remains the same as the copy source node $\iota(\sigma)$, that is,
\begin{align*}
\pair{{\rm rk}_{\mathcal{V}^b}(\sigma),{\rm br}_{\mathcal{V}^b}(\sigma)}=&\pair{{\rm rk}_\mathcal{V}(\iota(\sigma)),0}.
\end{align*}
\end{enumerate}\medskip

\noindent We often identify $\mathbb{V}$ with the collection $(\mathbb{V}^b\mid b:\mathcal{V}^{\sf fin}\to \om)$.
A {\em flow} on a vein $\mathbb{V}$ is a pair $\Lambda=(\mathbf{T},\Gamma)$ such that $\Gamma$ is a flowchart on a labeled well-founded tree of the form $\mathbf{T}=\mathbb{V}^b$ for some branching function $b:\mathcal{V}^{\sf fin}\to\om$.
A flow $\Lambda=(\mathbf{T},\Gamma)$ automatically induces a function $f_\Lambda$ as in Definition \ref{def:labeledwellfounded}, that is, $f_\Lambda=f_\Gamma$.

\begin{definition}
Let $\mathbb{V}=(\mathcal{V},{\rm rk}_\mathcal{V})$ be a vein.
A function $f:\mathcal{X}\to\mathcal{Y}$ is {\em $\mathbb{V}$-piecewise continuous} if there is a flow $\Lambda$ on the vein $\mathbb{V}$ such that $f=f_\Lambda$.
\end{definition}

A vein $\mathbb{V}$ is {\em of Borel rank $(\zeta,\eta)$} if it is of Borel rank $(\zeta,\eta)$ as a labeled well-founded tree.

\subsection*{Operations on Veins}

We first note that $\mathbb{V}$-piecewise continuity may not be closed under taking composition.
Therefore, it is natural to introduce the notion of a {\em transitive closure} of a vein $\mathbb{V}$.
Given two veins $\mathbb{V}_0=(\mathcal{V}_0,{\rm rk}_{\mathcal{V}_0})$ and $\mathbb{V}_1=(\mathcal{V}_1,{\rm rk}_{\mathcal{V}_1})$, the {\em concatenation} $\mathbb{V}_0\fr\mathbb{V}_1=(\mathcal{V}_0\fr\mathcal{V}_1,{\rm rk}_{\mathcal{V}_0\fr\mathcal{V}_1})$ of $\mathbb{V}_0$ and $\mathbb{V}_1$ is defined as follows:
\begin{align*}
\mathcal{V}_0\fr\mathcal{V}_1&=\{\sigma:(\exists\rho\in \mathcal{V}_0^{\rm leaf})(\exists\tau\in \mathcal{V}_1)\;\sigma\preceq\rho\fr \tau\},\\
{\rm rk}_{\mathcal{V}_0\fr\mathcal{V}_1}(\sigma)&=
\begin{cases}
{\rm rk}_{\mathcal{V}_0}(\sigma)&\mbox{ if }\sigma\in\mathcal{V}_0\setminus\mathcal{V}_0^{\rm leaf},\\
{\rm rk}_{\mathcal{V}_1}(\tau)&\mbox{ if }\sigma=\rho\fr\tau\mbox{ for some }\rho\in\mathcal{V}_0^{\rm leaf}.
\end{cases}
\end{align*}

If $f_i$ is $\mathbb{V}_i$-piecewise continuous for each $i<2$, the composition $f_1\circ f_0$ is obviously $(\mathbb{V}_0\fr\mathbb{V}_1)$-piecewise continuous.
Define $\mathbb{V}^{(1)}=\mathbb{V}$, and $\mathbb{V}^{(n+1)}=\mathbb{V}^{(n)}\fr\mathbb{V}$ for each $n\in\om$.
Then, we can think of the countable collection ${\rm trcl}(\mathbb{V}):=(\mathbb{V}^{(n)})_{n\in\om}$ as the {\em transitive closure} of $\mathbb{V}$.

Next, it is also worth mentioning that every vein (indeed, every countable collection of veins) is dominated by a single labeled well-founded tree.
Here we say that for collections $\mathbb{V}_0,\mathbb{V}_1$ of labeled well-founded trees, $\mathbb{V}_1$ {\em dominates} $\mathbb{V}_0$ if every $\mathbb{V}_0$-piecewise continuous function is $\mathbb{V}_1$-piecewise continuous (recall that every vein $\mathbb{V}$ is identified with the collection $(\mathbb{V}^b\mid b:\mathcal{V}^{\sf fin}\to\om)$ of labeled well-founded trees). 
Given a vein $\mathbb{V}=(\mathcal{V},{\rm rk}_\mathcal{V})$ we inductively define the {\em closure} $\overline{\mathbb{V}}=(\overline{\mathcal{V}},{\rm rk}_{\overline{\mathcal{V}}})$ of $\mathbb{V}$ (with a copy-source-referring function $\iota:\overline{\mathcal{V}}\to\mathcal{V}$) as follows:

\begin{enumerate}
\item $\pair{}\in \overline{\mathcal{V}}$ and $\iota(\pair{})=\pair{}$.
\item If $\sigma\in \overline{\mathcal{V}}$ and ${\rm br}_\mathcal{V}(\iota(\sigma))=1$, then we insert a new infinite branch of Borel complexity $0$ whose successors are copies of successors of $\iota(\sigma)$, that is,
\begin{align*}
\pair{{\rm rk}_{\overline{\mathcal{V}}}(\sigma),{\rm br}_{\overline{\mathcal{V}}}(\sigma)}=&\pair{0,\om},\\
\pair{{\rm rk}_{\overline{\mathcal{V}}}(\sigma\fr n),{\rm br}_{\overline{\mathcal{V}}}(\sigma\fr n)}=&\pair{{\rm rk}_\mathcal{V}(\iota(\sigma)),{\rm br}_\mathcal{V}(\iota(\sigma))},\\
\sigma\fr n,\sigma\fr n\fr\ast\in\overline{\mathcal{V}}, \quad\quad & \iota(\sigma\fr n\fr\ast)=\iota(\sigma)\fr\ast,
\end{align*}
for every $n\in\om$, where $\sigma\fr\ast$ is the unique immediate successor of $\iota(\sigma)$ in $\mathcal{V}$
\item If $\sigma\in \overline{\mathcal{V}}$ and ${\rm br}_\mathcal{V}(\iota(\sigma))\in\{0,\om\}$, then $\sigma$ remains unchanged.
\end{enumerate}

We define a branching function $d:\overline{\mathcal{V}}{}^{\sf fin}\to \om$ by $d(\sigma)=\sigma^{{\rm last}}$ for every nonempty string $\sigma\in\overline{\mathcal{V}}{}^{\sf fin}$.
Here, note that if $\iota(\sigma)=\tau$ is a finitely branching node in $\mathbb{V}$, then there are infinitely many copies of $\tau$ below $\sigma$ in the closure $\overline{\mathbb{V}}$.
The branching function $d$ converts the $n$-th such copy into an $n$-branching node, that is, $d$ realizes our intended meaning of a finitely-branching node $\tau$ of a vein $\mathbb{V}$ that the number of branches of $\tau$ can be any finite value.
Then, we consider the labeled well-founded tree $\mathbf{T}(\mathbb{V}):=\overline{\mathbb{V}}{}^d$ (see also Figure \ref{fig:flowchart3}).

\begin{figure}[t]
\centering
\includegraphics[scale=0.8]{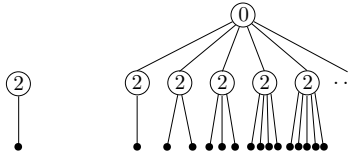}
\caption{\footnotesize (Left) An original vein $\mathbb{V}$; (Right) The closure $\mathbf{T}(\mathbb{V})$ of the vein $\mathbb{V}$.}
\label{fig:flowchart3}
\end{figure}


\begin{prop}\label{prop:closuredomination}
Every vein $\mathbb{V}$ is dominated by $\{\mathbf{T}(\mathbb{V})\}$, that is, every $\mathbb{V}$-piecewise continuous function is $\mathbf{T}(\mathbb{V})$-piecewise continuous.
\end{prop}

\begin{proof}
Given a branching function $b:\mathcal{V}^{\sf fin}\to\om$, consider the $\Pi^0_0$ cover $(P_{\sigma\fr i})$ for every $\sigma\in\overline{\mathcal{V}}{}^{\sf fin}$ defined by $P_{\sigma\fr i}=\mathcal{X}$ for $i=b(\iota(\sigma^-))$ and $P_{\sigma\fr i}=\emptyset$ for $i\not=b(\iota(\sigma^-))$.
By using these covers, it is not hard to check that $\mathbb{V}^b$-piecewise continuous functions are $\mathbf{T}(\mathbb{V})$-piecewise continuous.
\end{proof}

By a similar argument, given a countable collection $\mathbb{V}^\ast$ of veins, one can easily construct a vein $\overline{\mathbb{V}^\ast}$ and a labeled well-founded tree $\mathbf{T}(\mathbb{V}^\ast)$ on $\overline{\mathbb{V}^\ast}$ that dominates $\mathbb{V}^\ast$.
One can also introduce a representation of the set ${\rm Flow}(\mathbb{V})$ of all flows on $\Lambda$ via representations of branching functions $b$, $\mathbf{\Pi}^0_\xi$ sets $P_\sigma$ (via Borel codes), and continuous functions $f_\rho$.
Under such a representation, one can see that a function $f:\mathcal{X}\to\mathcal{Y}$ is $\mathbf{T}(\mathbb{V})$-piecewise continuous if there is a continuous function $\Lambda:\mathcal{X}\to{\rm Flow}(\mathbb{V})$ such that $f(x)=f_{\Lambda(x)}(x)$ for every $x\in\mathcal{X}$.

We say that $\mathbb{V}_0$ is {\em equivalent} to $\mathbb{V}_1$ if $\mathbb{V}_1$ dominates $\mathbb{V}_0$ and vice versa.
A vein $\mathbb{V}=(\mathcal{V},{\rm rk}_\mathcal{V})$ is {\em normal} if every non-terminal rank $0$ node is infinitely branching, and for every non-terminal node $\sigma\in \mathcal{V}$ of positive length, if the Borel rank of $\sigma$ is not greater than that of the immediate predecessor $\sigma^-$, then the number of immediate successors of $\sigma$ must be greater than that of $\sigma^-$, that is, $\mathcal{V}$ satisfies the following two conditions:
\begin{align*}
{\rm rk}_\mathcal{V}(\sigma)=0\;&\Longrightarrow\;{\rm br}_\mathcal{V}(\sigma)=\om,\\
{\rm rk}_\mathcal{V}(\sigma^-)\geq {\rm rk}_\mathcal{V}(\sigma)\;&\Longrightarrow\;1={\rm br}_\mathcal{V}(\sigma^-)<{\rm br}_\mathcal{V}(\sigma)=\om.
\end{align*}

\begin{lemma}\label{lem:normal}
Every vein is equivalent to a normal vein.
\end{lemma}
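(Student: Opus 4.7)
The plan is to construct, from any vein $\mathbb{V}=(\mathcal{V},{\rm rk}_\mathcal{V})$, an equivalent normal vein $\mathbb{V}^\ast$ via iterated local surgeries on $\mathcal{V}$, each preserving the class of $\mathbb{V}$-piecewise continuous functions. The two normality conditions to enforce are (N1) every non-terminal rank-$0$ node is $\om$-branching, and (N2) for every consecutive pair $\sigma^-\to\sigma$ in $\mathcal{V}$ with ${\rm rk}_\mathcal{V}(\sigma^-)\geq{\rm rk}_\mathcal{V}(\sigma)$, we must have ${\rm br}_\mathcal{V}(\sigma^-)=1$ and ${\rm br}_\mathcal{V}(\sigma)=\om$.

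To enforce (N1) I introduce the surgery \emph{Absorb}: any rank-$0$ finitely-branching non-root node $\sigma$ is eliminated by reattaching its clopen children (together with their entire subtrees) as additional children of the parent $\sigma^-$. The refined cover at $\sigma^-$ remains $\mathbf{\Pi}^0_{{\rm rk}_\mathcal{V}(\sigma^-)}$ because clopen sets lie in $\mathbf{\Pi}^0_\xi$ for every $\xi\geq 0$; the resulting function is unchanged since the leftmost-branch semantics at $\sigma^-$ followed by the one at $\sigma$ is simulated by the leftmost-branch semantics at the enlarged $\sigma^-$, provided the absorbed children are inserted in the place of $\sigma$ in the child ordering. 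A rank-$0$ finitely-branching root is first turned into a non-root by prepending a trivial $1$-branching rank-$1$ parent whose single cover set is the whole domain, after which Absorb applies.

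To enforce (N2) I use \emph{Collapse}: a violating pair $\sigma^-\to\sigma$ is merged into a single node at rank ${\rm rk}_\mathcal{V}(\sigma^-)$ carrying branching $\om$ if either was $\om$-branching and $1$ otherwise, with compound cover $\{P_{\sigma^-\fr n}\cap P_{\sigma\fr m}\}_{n,m}$. This cover is $\mathbf{\Pi}^0_{{\rm rk}_\mathcal{V}(\sigma^-)}$ by closure under finite intersection, and the reverse direction of dominance holds because any single $\mathbf{\Pi}^0_\xi$ cover factors trivially through a one-branch intermediate. Notice that Collapse does not disturb the rank or branching of $\sigma^-$'s parent or of $\sigma$'s children, so it cannot introduce new (N2)-violations away from the modified interface.

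The normalization proceeds by induction on the well-founded structure of $\mathcal{V}$: first normalize each subtree rooted at a child of the root, then address (N1) at the root (if necessary) via the root variant of Absorb, and finally repair (N2) at each root-child interface by Collapse. Since $\mathcal{V}$ is well-founded, the induction is valid, and at each node only finitely many surgeries are needed. The main obstacle is verifying the two-way equivalence for Absorb: one must check that every flow on the post-Absorb vein can be lifted to a flow on the pre-Absorb vein defining the same function. This is handled by reinserting a trivial rank-$0$ intermediate whose clopen cover $\{P_{\sigma\fr i}\cap P_\sigma\}_{i<k}$ replicates the grafting at $\sigma^-$, again using that clopen sets are $\mathbf{\Pi}^0_\xi$ at every positive rank.
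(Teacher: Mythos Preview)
Your proposal is correct and follows essentially the same route as the paper: iteratively delete offending nodes from the vein and verify that each deletion preserves the class of $\mathbb{V}$-piecewise continuous functions in both directions. The one substantive difference is the direction in which you absorb a rank-$0$ finitely-branching node. You push the clopen split \emph{upward} into the parent's $\mathbf{\Pi}^0_{{\rm rk}(\sigma^-)}$ cover (your Absorb), whereas the paper pushes it \emph{downward} into the leaf functions: since all subtrees below the copies $\tau\fr i$ of $\sigma$ in a $b$-branching have the same shape, one can merge them into a single subtree and fold the clopen case distinction into each continuous $f_{\tau\fr\rho}$. Both work; the paper's variant avoids inflating the branching count at $\sigma^-$ and hence needs no separate treatment of the root.

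One imprecision worth flagging: your description of the Collapse cover $\{P_{\sigma^-\fr n}\cap P_{\sigma\fr m}\}_{n,m}$ implicitly treats every child of $\sigma^-$ as a copy of $\sigma$, which is only guaranteed when $\sigma^-$ is $1$-branching. When $\sigma^-$ is $\om$-branching in the vein, its other children may have entirely different subtree shapes, so ``merging $\sigma^-$ and $\sigma$'' is not the right picture; the correct local move (and what the paper does) is simply to delete $\sigma$ and splice its children into $\sigma^-$'s child list in place of $\sigma$, leaving the siblings of $\sigma$ untouched. With that adjustment your Collapse coincides with the paper's removal step.
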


\begin{proof}
Suppose that $\sigma$ is a non-terminal finitely branching node such that ${\rm rk}_\mathcal{V}(\sigma)=0$.
Then, for a given labeled well-founded tree $\mathbf{V}=(V,{\rm rk}_V)$ on $\mathbb{V}$, a finite collection of rank $0$ sets $(U_i)_{i<k}$ will be placed on each node $\tau\in V$ with $\iota(\tau)=\sigma$.
Note that by the definition of a vein, the shapes below $\tau\fr i$ in $V$ for all $i<k$ are exactly the same.
Then, consider leaves in $V$ of the forms $\tau\fr i\fr \rho$ for $i<k$.
Since $(U_i)_{i<k}$ are of Borel rank $0$, by combining $(f_{\tau\fr i\fr \rho})_{i<k}$, one can easily get a single continuous function $f^\ast_{\tau\fr\rho}$.
Therefore, it causes no effect on $\mathbb{V}$-piecewise continuity even if we remove the node $\sigma$ from the vein $\mathbb{V}$.
For the latter condition of normality, if ${\rm rk}_\mathcal{V}(\sigma)\leq{\rm rk}_\mathcal{V}(\sigma^-)$ and ${\rm br}_\mathcal{V}(\sigma)\leq{\rm br}_\mathcal{V}(\sigma^-)$ then we can remove $\sigma$ from the vein as well.
\end{proof}

Consequently, we can always assume that, if our space is $2^\om$, every rank $0$ set assigned to a rank $0$ node is the clopen set generated by a single binary string $\eta\in 2^{<\om}$.
Hereafter we adopt this convention.
We also say that a vein $\mathbb{V}=(\mathcal{V},{\rm rk}_\mathcal{V})$ is {\em strongly normal} if it is normal, and moreover, for any non-terminal node $\sigma\in\mathcal{V}$ either the following condition (1) or (2) holds:
\begin{enumerate}
\item ${\rm rk}_\mathcal{V}(\sigma)<{\rm rk}_\mathcal{V}(\sigma^-)$ and ${\rm br}_\mathcal{V}(\sigma)>{\rm br}_\mathcal{V}(\sigma^-)$.
\item ${\rm rk}_\mathcal{V}(\sigma)>{\rm rk}_\mathcal{V}(\sigma^-)$ and ${\rm br}_\mathcal{V}(\sigma)<{\rm br}_\mathcal{V}(\sigma^-)$.
\end{enumerate}
Clearly, every strongly normal vein is normal.
To simplify our argument, in our main theorems, we assume strong normality of a vein; although the reader may find that a straightforward (but notationally complicated) modification of our proof gives us a similar result for non-strongly-normal veins.

We now introduce several operations on veins.
First we consider the finitary (infinitary) $\xi$-increment operation, which adds a new finitely (infinitely) $\mathbf{\Pi}^0_\xi$-branching node above the root $\langle\rangle$ of a given vein.
For a countable ordinal $\xi<\omega_1$ the {\em finite (infinite) $\xi$-increment} of a vein $\mathbb{V}=(\mathcal{V},{\rm rk}_\mathcal{V})$, denoted by $\mathbb{V}^{\oplus \xi}=(\mathcal{V}^{\oplus \xi},{\rm rk}_\mathcal{V}^{\oplus\xi})$ ($\mathbb{V}^{\oplus_\om \xi}=(\mathcal{V}^{\oplus_\om \xi},{\rm rk}_\mathcal{V}^{\oplus_\om\xi})$), is defined as follows:
\begin{align*}
\mathcal{V}^{\oplus\xi}&=\{\langle\rangle\}\cup\{\langle 0\rangle\fr\sigma:\sigma\in \mathcal{V}\},\quad
{\rm rk}_\mathcal{V}^{\oplus\xi}(\pair{})=\xi,\quad{\rm rk}_\mathcal{V}^{\oplus\xi}(0\fr\sigma)={\rm rk}_\mathcal{V}(\sigma),\\
\mathcal{V}^{\oplus_\om\xi}&=\{\langle\rangle\}\cup\{\langle n\rangle\fr\sigma:n\in\om\mbox{ and }\sigma\in \mathcal{V}\},\quad
{\rm rk}_\mathcal{V}^{\oplus_\om\xi}(\pair{})=\xi,\quad{\rm rk}_\mathcal{V}^{\oplus_\om\xi}(n\fr\sigma)={\rm rk}_\mathcal{V}(\sigma).
\end{align*}\medskip

\noindent Next we consider another operation.
Given a labeled well-founded tree $(T,{\rm rk}_T)$, we say that $\sigma\in T$ is {\em almost-terminal} if it is minimal among strings which have only finitely many successors in $T$, that is, there are only finitely many $\rho\in T$ extending $\sigma$, and every $\tau\prec\sigma$ has infinitely many successors in $T$.
More explicitly, for a leaf $\xi\in T^{\rm leaf}$, if the immediate predecessor $\xi^-$ is finitely branching, then we define ${\xi}^\ast=\xi^-$; otherwise, we define ${\xi}^\ast=\xi$.
If a vein $\mathbb{V}=(\mathcal{V},{\rm rk}_\mathcal{V})$ is normal, a string $\sigma\in \mathcal{V}$ is  almost-terminal if and only if it is of the form $\xi^\ast$ for some leaf $\xi\in \mathcal{V}^{\rm leaf}$.
Let $\mathcal{V}^{\rm at}$ denote the set of all almost-terminal nodes in $\mathcal{V}$.

The $\xi$-replacement operation converts each almost-terminal node (and all extensions) into an infinite $\mathbf{\Pi}^0_\xi$-branching node all of whose immediate successors are leaves.
For a countable ordinal $\xi<\omega_1$, the {\em $\xi$-replacement} of a vein $\mathbb{V}=(\mathcal{V},{\rm rk}_\mathcal{V})$, denoted by $\mathbb{V}^{\ominus\xi}=(\mathcal{V}^{\ominus\xi},{\rm rk}_\mathcal{V}^{\ominus\xi})$, is defined as follows (see also Figure \ref{fig:flowchart4}):
\begin{align*}
\mathcal{V}^{\ominus\xi}&=\{\sigma\in \mathcal{V}:\sigma\preceq\tau\mbox{ for some $\tau\in \mathcal{V}^{\rm at}$}\}\cup\{\tau\fr n:\tau\in \mathcal{V}^{\rm at}\mbox{ and }n\in\om\},\\
{\rm rk}_\mathcal{V}^{\ominus\xi}(\sigma)&=
\begin{cases}
\xi&\mbox{ if }\tau\in \mathcal{V}^{\rm at},\\
{\rm rk}_\mathcal{V}(\sigma)&\mbox{ otherwise}.
\end{cases}
\end{align*}

\begin{figure}[t]
\centering
\includegraphics[scale=0.8]{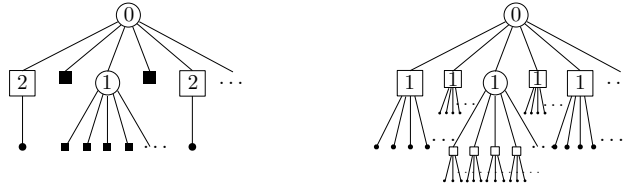}
\caption{\footnotesize (Left) An original vein $\mathbb{V}$, where almost-terminal nodes are surrounded by squares; (Right) The $1$-replacement $\mathbb{V}^{\ominus 1}$ of $\mathbb{V}$.}
\label{fig:flowchart4}
\end{figure}

Given a vein $\mathbb{V}$, we consider the following veins $\mathbb{V}'$ and $\mathbb{V}''$:
\begin{align*}
\mathbb{V}'&=
\begin{cases}
\mathbb{V}^{\ominus 1\oplus ({\rm rk}_\mathcal{V}(\langle\rangle)+1)}&\mbox{ if }{\rm br}_\mathcal{V}(\langle\rangle)=\om,\\
\mathbb{V}^{\ominus 1\oplus_\om 0\oplus 1}&\mbox{ if }{\rm br}_\mathcal{V}(\langle\rangle)=1,\\
\end{cases}\\
\mathbb{V}''&=
\begin{cases}
(\overline{\mathbb{V}^{\ominus 1}})^{\oplus ({\rm rk}_\mathcal{V}(\langle\rangle)+1)}&\mbox{ if }{\rm br}_\mathcal{V}(\langle\rangle)=\om,\\
(\overline{\mathbb{V}^{\ominus 1}})^{\oplus 1}&\mbox{ if }{\rm br}_\mathcal{V}(\langle\rangle)=1,\\
\end{cases}
\end{align*}

\begin{example}\label{example:double-prime}
For each $k\in\{1,\om\}$, let $\mathbb{V}_{\xi,k}$ be the vein such that $\mathcal{V}_{\xi,k}$ is a tree of height $2$ whose root is $k$-branching and labeled by $\xi$.
\begin{enumerate}
\item $(\mathbb{V}_{2,1})''$ is equivalent to $\mathbb{V}_{1,\om}$:
This is because $(\mathbb{V}_{2,1})^{\ominus 1}=\mathbb{V}_{1,\om}$, and $\overline{\mathbb{V}_{1,\om}}=\mathbb{V}_{1,\om}$.
Moreover, $(\mathbb{V}_{1,\om})^{\oplus 1}=\mathbb{V}_{1,1}\fr\mathbb{V}_{1,\om}$ is clearly equivalent to $\mathbb{V}_{1,\om}$.
\item $(\mathbb{V}_{1,\om}\fr\mathbb{V}_{2,1})''$ is equivalent to $\mathbb{V}_{2,1}\fr\mathbb{V}_{1,\om}$:
This is because, for $\mathbb{V}=\mathbb{V}_{1,\om}\fr\mathbb{V}_{2,1}$, $ \mathbb{V}^{\ominus 1}$ and hence $\overline{\mathbb{V}^{\ominus 1}}$ are equivalent to $\mathbb{V}_{1,\om}$, and $(\mathbb{V}_{1,\om})^{\oplus 2}=\mathbb{V}_{2,1}\fr\mathbb{V}_{1,\om}$, where note that ${\rm rk}_\mathcal{V}(\langle\rangle)+1=2$.
\item $(\mathbb{V}_{2,1}\fr\mathbb{V}_{1,\om}\fr\mathbb{V}_{2,1})''$ is equivalent to $\mathbb{V}_{1,1}\fr\mathbb{V}_{0,\om}\fr\mathbb{V}_{2,1}\fr\mathbb{V}_{1,\om}$:
This is because, for $\mathbb{V}=\mathbb{V}_{2,1}\fr\mathbb{V}_{1,\om}\fr\mathbb{V}_{2,1}$, $\mathbb{V}^{\ominus 1}=\mathbb{V}_{2,1}\fr\mathbb{V}_{1,\om}$ and therefore $\overline{\mathbb{V}^{\ominus 1}}=\mathbb{V}_{0,\om}\fr\mathbb{V}_{2,1}\fr\mathbb{V}_{1,\om}$.
\item Put $\mathbb{X}_m=\mathbb{V}_{1,\om}$ and $\mathbb{Y}_m=\mathbb{V}_{2,1}$ for any $m$.
In general, we have the following:
\begin{enumerate}
\item $(\mathbb{X}_0\fr\mathbb{Y}_0\fr\dots\fr\mathbb{X}_n\fr\mathbb{Y}_n)''$ is equivalent to $\mathbb{Y}_0\fr\mathbb{X}_0\fr\dots\fr\mathbb{Y}_n\fr\mathbb{X}_n$.
\item $(\mathbb{Y}_0\fr\mathbb{X}_0\fr\dots\fr\mathbb{Y}_n\fr\mathbb{X}_n\fr\mathbb{Y}_{n+1})''$ is equivalent to $\mathbb{V}_{1,1}\fr\mathbb{V}_{0,\om}\fr\mathbb{Y}_0\fr\mathbb{X}_0\fr\dots\fr\mathbb{Y}_n\fr\mathbb{X}_n$.
\end{enumerate}
\end{enumerate}
\end{example}

\section{Main Theorems}

\subsection{Topological Results}

We now consider coWadge/Weihrauch reducibility associated with classes of $\mathbb{V}$-piecewise continuous functions.
For a vein $\mathbb{V}$, we denote by $\mathbb{V}\mathcal{C}$ the class of all $\mathbb{V}$-piecewise continuous functions, and by $\sigma\mathcal{C}$ the class of all $\sigma$-continuous functions.
We often omit the symbol $\mathcal{C}$, e.g., we use the terminology such as $\mathbb{V}$-coWadge reducibility and $(\mathbb{V},\sigma)$-Weihrauch reducibility instead of $\mathbb{V}\mathcal{C}$-coWadge reducibility and $(\mathbb{V}\mathcal{C},\sigma\mathcal{C})$-Weihrauch reducibility.

We now focus on multi-valued functions which do not admit $\sigma$-continuous uniformizations.
However, this non-uniformizability property is not strong enough to obtain our main result, and so we will need to require functions to have a slightly stronger property.
For any known natural example $\mathcal{U}\subseteq 2^\om\times 2^\om$ which does not admit a $\sigma$-continuous uniformization, we may notice that even if we restrict the domain of $\mathcal{U}$ to any set $\mathcal{X}$ of {\em almost all} inputs, $\mathcal{U}\upr\mathcal{X}$ still does not admit a $\sigma$-continuous uniformization.
However, we also notice that if $\mathcal{U}$ is compact, then $\mathcal{U}$ admits a Borel (indeed, Baire-one) uniformization; therefore, for a set $\mathcal{X}$ of {\em almost all} inputs, $\mathcal{U}\upr\mathcal{X}$ admits a closed-piecewise continuous (i.e., layerwise continuous) uniformization by Luzin's theorem (indeed, if a $\sigma$-ideal $\mathcal{I}$ has the continuous reading of names, then $\mathcal{U}\upr\mathcal{X}$ admits a continuous uniformization on an $\mathcal{I}$-positive set $\mathcal{X}$).
The latter ``almost all'' is, of course, $\mu$-conullness with respect to the canonical product measure $\mu$ on $2^\om$ while the former ``almost all'' is $\mu$-conullness with respect to the Martin measure $\mu$ on $2^\om$.

A tree $E\subseteq 2^{<\om}$ is {\em pointed} if it is pruned (i.e., there is no leaf), and every infinite path through $E$ computes $E$ itself.
A perfect set $\mathcal{E}\subseteq 2^\om$ is {\em pointed} if it consists of all infinite paths through a pointed tree.
An important property of a pointed perfect set $\mathcal{E}$ is that $\mathcal{E}$ contains all Turing degrees above the degree of the base tree $E$.
For $A\subseteq 2^\om$ we put $\mu(A)=0$ if and only if $A$ has no pointed perfect subset, and this $\mu$ is called the {\em Martin measure} (under the axiom of determinacy).

As mentioned above, we do not know any {\em natural} example which does not admit a $\sigma$-continuous uniformization, but admits a $\sigma$-continuous uniformization on a pointed perfect set. (Of course, one can easily construct such an {\em artificial} example.)
Our requirement to $\mathcal{U}$ is to not admit a $\sigma$-continuous uniformization on a pointed perfect set.
In Section \ref{section:proof-main}, we will show the following:

\begin{theorem}\label{thm:main}
Let $\mathbb{V}$ be a strongly normal vein of Borel rank $(1,2)$.
For any nonempty compact sets $\mathcal{S},\mathcal{U}\subseteq 2^\om\times 2^\om$, if $\mathcal{U}$ does not admit a $\sigma$-continuous uniformization on a pointed perfect set, then there exists a compact set $\mathcal{T}\subseteq 2^\om\times 2^\om$ such that
\begin{enumerate}
\item $\mathcal{T}$ is coWadge reducible to $\mathcal{S}$.
\item $\mathcal{S}$ is $\mathbb{V}^{\prime\prime}$-coWadge reducible to $\mathcal{T}$.
\item $\mathcal{U}$ is not weakly $\mathbb{V}$-coWadge reducible to $\mathcal{T}$.
\end{enumerate}
\end{theorem}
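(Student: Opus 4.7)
The plan is to construct $\mathcal{T}$ as a priority-tree amalgamation of $\mathcal{S}$, organized along the labeled tree $\mathbb{V}''$. Intuitively, I would build $\mathcal{T}\subseteq 2^\om\times 2^\om$ whose fibers $\mathcal{T}(x)$ are parametrized by a tree isomorphic to $\mathbb{V}''$: at each node $\sigma$ of the priority tree I would plant either a ``clean'' copy of $\mathcal{S}(x)$ (tagged by information about $\sigma$) or a pointed perfect ``blocking'' block, according to an enumeration of candidate flows $\Lambda$ on $\mathbb{V}$. Compactness of $\mathcal{T}$ is preserved by taking all placements inside a prescribed closed cylinder structure, and the tagging data will let a later $\mathbb{V}''$-piecewise continuous map recover which subtree of $\mathbb{V}''$ was visited.

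Conditions (1) and (2) are the easier half. For (1), every element placed in $\mathcal{T}(x)$ is, by construction, obtained from an element of $\mathcal{S}(x)$ decorated with priority-tree bookkeeping; the coWadge reduction $\theta:2^\om\to 2^\om$ is then the continuous forgetting of that bookkeeping. For (2), recall from the definition of $\mathbb{V}''$ that it is $\overline{\mathbb{V}^{\ominus 1}}$ with a single fresh top node whose label is either ${\rm rk}_{\mathcal{V}}(\langle\rangle)+1$ or $1$ depending on ${\rm br}_\mathcal{V}(\langle\rangle)$. The reduction from $\mathcal{S}$ to $\mathcal{T}$ would use this top $\mathbf{\Pi}^0_?$-branching to decide in which main ``block'' of $\mathcal{T}$ a given solution $\psi(x)$ lives, then descend through $\overline{\mathbb{V}^{\ominus 1}}$ using tests that are refinements of the planted tags, and finally read the $\mathcal{S}$-component off the leaf. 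By Proposition \ref{prop:chara-label} applied to the induced flow, the whole map is exactly $\mathbb{V}''$-piecewise continuous.

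Condition (3) is the main difficulty and the site of the priority argument. Assume toward contradiction a weak $\mathbb{V}$-coWadge reduction $k:2^\om\times 2^\om\to 2^\om$ witnessing $\mathcal{U}\le_{\rm w}^{\mathbb{V}}\mathcal{T}$. By Proposition \ref{prop:chara-label}, $k$ is continuously Weihrauch reducible to ${\rm TP}_\Lambda$ for some flow $\Lambda$ on $\mathbb{V}$, which, by our enumeration, was addressed at some stage of the construction of $\mathcal{T}$. Using pointedness of the blocking blocks placed against $\Lambda$, I would iteratively pass to a pointed perfect subset $\mathcal{E}\subseteq 2^\om$ along the finite tree $\mathcal{V}$: at each node $\sigma$ the relevant $\mathbf{\Pi}^0_{{\rm rk}_\mathcal{V}(\sigma)}$ test (with ${\rm rk}_\mathcal{V}(\sigma)\in\{1,2\}$) becomes clopen on a further pointed perfect refinement, preserving pointedness because pointed perfect trees are closed under Turing-uniform subselection. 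On $\mathcal{E}$ the true path of $\Lambda\circ h$ is locally constant, so $k(x,\psi(h(x)))$ furnishes a continuous selection of $\mathcal{U}$ on $\mathcal{E}$. This contradicts the standing hypothesis that $\mathcal{U}$ admits no $\sigma$-continuous uniformization on a pointed perfect set.

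The hardest step is reconciling the two demands at the construction stage: the honest copies of $\mathcal{S}$ planted for (2) must be dense enough along the priority tree to support the $\mathbb{V}''$-navigation, while the blocking blocks for (3) must sit at priorities strictly finer than anything a $\mathbb{V}$-flow can resolve. This is precisely where the strong normality of $\mathbb{V}$ earns its keep: the forced alternation between Borel rank and branching number (conditions (1) and (2) of strong normality) guarantees that the rank/branching profile of $\mathbb{V}''$ dominates that of every $\mathbb{V}^b$ by exactly one priority level, giving the single step of ``headroom'' needed for the diagonalization to be consistent with the positive reduction. The non-strongly-normal case, as the authors note, is a notationally heavier variant of the same argument.
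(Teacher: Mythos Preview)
Your sketch misidentifies where the diagonalization lives and, as a result, the argument for (3) does not go through. In weak $\mathbb{V}$-coWadge reducibility there is no inner map $h$; the reduction is a $\mathbb{V}$-piecewise continuous $k$ with $k(x,\psi(x))\in\mathcal{U}(x)$ for \emph{every} uniformization $\psi$ of $\mathcal{T}$. Even if you could arrange that the true path of the flow defining $k$ is locally constant on a pointed perfect set $\mathcal{E}$ in the $x$-coordinate, this would only make $k(x,y)$ continuous in $(x,y)$ on that piece; since $\psi$ is an arbitrary (in general highly discontinuous) uniformization of $\mathcal{T}$, the composite $x\mapsto k(x,\psi(x))$ is not thereby $\sigma$-continuous, so no contradiction with the hypothesis on $\mathcal{U}$ follows. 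Separately, the assertion that a $\mathbf{\Pi}^0_2$ test ``becomes clopen on a further pointed perfect refinement'' is false in general (think of the $\Pi^0_2$ set of sequences that are not eventually $0$), so the refinement step itself is unsound.

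The paper's route is quite different. One first extracts from the hypothesis, via Martin's cone argument, a pointed perfect $\mathcal{E}$ on which $\mathcal{U}(z)$ has no $z$-computable point, and then builds $\mathcal{T}$ fiberwise so that $\mathcal{T}(z)\supseteq\mathcal{S}(z)$ together with extra paths $\gamma_{e,i,j}(x)$, one family for each candidate flow $\Lambda_e$ and each pair of Turing reductions $(i,j)$. The tree $V_e$ of the (weakly-totalized) flow $\Lambda_e$ itself serves as the priority tree for requirement $e$: almost-terminal strategies watch whether some leaf computation $f_\lambda$ appears to land in $U$, and if so extend $\gamma$. Convergence of $\gamma$ is exactly where non-$\sigma$-uniformizability is used: a divergent $\gamma$ would produce a \emph{computable} element of $\mathcal{U}$. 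This yields the key property $(\star)$ --- for each $k=\Lambda_e$ there is $y\in\mathcal{T}(z)$ with $k(y)\notin\mathcal{U}(z)$ --- from which (3) follows by fixing a single $x\in\mathcal{E}$ and freezing it in $k$. The $\mathbb{V}''$-structure enters only afterwards, in verifying (2): one shows the ``decode'' map $y\mapsto\gamma^{\leftarrow}(y)$ is $\mathbb{V}'$-piecewise computable by reading off, at each almost-terminal node, the stage and total length-of-agreement at which the priority construction stabilized. Your proposal has no analogue of the $\gamma$-construction or the convergence lemma, and the ``pointed perfect blocking blocks'' do not supply one.
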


Note that $\mathbb{V}$ itself may not be transitive; therefore, it seems better to consider the transitive closure ${\rm trcl}(\mathbb{V})$.
One can define the notion ${\rm trcl}(\mathbb{V})''$ in a straightforward manner.
Then, as a corollary of Theorem \ref{thm:main}, if $\mathbf{d}$ is a ${\rm trcl}(\mathbb{V})^{\prime\prime}$-coWadge degree containing a compact problem without $\sigma$-continuous uniformization on a pointed perfect set, then $\mathbf{d}$ contains an infinite decreasing chain of weak $\mathbb{V}$-coWadge degrees of compact problems.

We are also interested in whether our result gives a similar separation result for continuous Weihrauch reducibility.
Here we give a partial result.
We say that a function $h:2^\om\to 2^\om$ is {\em degree-invariant} if there is $c\in 2^\om$ such that, for any $x,y\geq_Tc$, the equation $x\equiv_Ty$ implies $h(x)\equiv_Th(y)$.
For a class $\mathcal{F}$ of functions, by $\mathcal{F}_{\rm inv}$ we mean that the class of all degree-invariant $\mathcal{F}$-functions.
It is clear that weakly $\mathbb{V}$-coWadge reducibility implies $(\mathbb{V},\mathcal{C}_{\rm inv})$-Weihrauch reducibility.
By symbols ${\sf ZF}$, ${\sf DC}$ and ${\sf AD}$ we denote the Zermelo-Fraenkel set theory (without choice), the axiom of dependent choice, and the axiom of determinacy, respectively.

\begin{theorem}[${\sf ZF}+{\sf DC}+{\sf AD}$]\label{thm:main2}
Let $\mathbb{V}$ be a strongly normal vein of Borel rank $(1,2)$.
For any nonempty compact sets $\mathcal{S},\mathcal{U}\subseteq 2^\om\times 2^\om$, if $\mathcal{U}$ does not admit a $\sigma$-continuous uniformization on a pointed perfect set, then there exists a compact set $\mathcal{T}\subseteq 2^\om\times 2^\om$ satisfying the assertions (1)--(4), where
\begin{enumerate}
\item[(4)] $\mathcal{U}$ is not $(\mathbb{V},\sigma_{\rm inv})$-Weihrauch reducible to $\mathcal{T}$.
\end{enumerate}
\end{theorem}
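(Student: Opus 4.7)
The plan is to reuse the compact set $\mathcal{T}$ produced by the proof of Theorem \ref{thm:main} verbatim, so that assertions (1), (2), (3) are inherited immediately. The only new content is the upgrade of (3) to (4), and this is where the axiom of determinacy enters. Suppose towards contradiction that $\mathcal{U}$ is $(\mathbb{V},\sigma_{\rm inv})$-Weihrauch reducible to $\mathcal{T}$, witnessed by a degree-invariant $\sigma$-continuous $h:2^\om\to 2^\om$ and a $\mathbb{V}$-piecewise continuous $k:2^\om\times 2^\om\to 2^\om$ satisfying $y\in\mathcal{T}(h(x))\;\Longrightarrow\;k(x,y)\in\mathcal{U}(x)$. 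Fix a countable cover $2^\om=\bigcup_{n}X_n$ witnessing $\sigma$-continuity of $h$, so $h\upr X_n$ is continuous for each $n$.

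Next I would invoke the Martin measure $\mu$ of Section~4.1. Under ${\sf ZF}+{\sf DC}+{\sf AD}$, every set of reals has the perfect set property and the Baire property, and $\mu$ is a countably additive two-valued measure whose null sets are exactly those with no pointed perfect subset. Therefore some $X_{n_0}$ contains a pointed perfect set $\mathcal{E}_0$, and by further shrinking to a pointed perfect subset of a countable intersection of dense opens one may assume $h\upr\mathcal{E}_0$ extends (by Tietze applied to the closure, or by the standard pointed-perfect-set relativization) to a continuous $\tilde h:2^\om\to 2^\om$ with $\tilde h\upr\mathcal{E}_0=h\upr\mathcal{E}_0$. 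Because $\mathcal{E}_0$ is pointed, the multifunction $\mathcal{U}\upr\mathcal{E}_0$ still inherits the hypothesis of Theorem \ref{thm:main}: it does not admit a $\sigma$-continuous uniformization on any pointed perfect subset of $\mathcal{E}_0$ (indeed any pointed perfect subset of $\mathcal{E}_0$ is a pointed perfect subset of $2^\om$).

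Now restrict the reduction to inputs in $\mathcal{E}_0$. The implication $y\in\mathcal{T}(\tilde h(x))\;\Longrightarrow\;k(x,y)\in\mathcal{U}(x)$ holds for all $x\in\mathcal{E}_0$ with $\tilde h$ genuinely continuous, so the pair $(\tilde h,k)$ exhibits $\mathcal{U}\upr\mathcal{E}_0$ as $(\mathbb{V},\mathcal{C})$-Weihrauch reducible to $\mathcal{T}$. Composing $\tilde h$ with the output of any uniformization of $\mathcal{T}$ yields a weak $\mathbb{V}$-coWadge reduction from $\mathcal{U}\upr\mathcal{E}_0$ to $\mathcal{T}$ (this is the observation, noted just before Theorem \ref{thm:main2}, that weak $\mathbb{V}$-coWadge reducibility implies $(\mathbb{V},\mathcal{C}_{\rm inv})$-Weihrauch reducibility, and conversely a continuous input-transformer can be absorbed into a weak coWadge witness). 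Running the priority argument behind Theorem \ref{thm:main}, relativized to the base tree of $\mathcal{E}_0$, produces a $\sigma$-continuous uniformization of $\mathcal{U}$ on a pointed perfect subset $\mathcal{E}_1\subseteq\mathcal{E}_0$, contradicting the hypothesis on $\mathcal{U}$.

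The main obstacle is the passage from ``$h$ is continuous on the pointed perfect set $\mathcal{E}_0$'' to ``the proof of Theorem \ref{thm:main} applies to the restricted reduction''. Two subtleties have to be checked: first, that countable additivity of the Martin measure is strong enough to land $h$ continuously on $\mathcal{E}_0$ (this is where ${\sf AD}$ is genuinely needed—under ${\sf ZF}+{\sf DC}$ alone the decomposition $\{X_n\}$ may avoid every pointed perfect set simultaneously); second, that the priority construction behind Theorem \ref{thm:main} relativizes uniformly in the base tree of $\mathcal{E}_0$, so that the witness to non-weak-$\mathbb{V}$-coWadge-reducibility produced there is in fact a witness to the non-existence of a $\sigma$-continuous uniformization of $\mathcal{U}$ on \emph{every} pointed perfect subset rather than on $2^\om$ itself. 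Both issues are handled by writing the priority construction relative to an arbitrary pointed perfect oracle, which is the natural formulation in any case.
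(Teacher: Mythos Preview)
Your argument has a genuine gap: you never use the hypothesis that $h$ is \emph{degree-invariant}, and without it the contradiction does not go through. The step ``a continuous input-transformer can be absorbed into a weak coWadge witness'' is simply false. A weak $\mathbb{V}$-coWadge reduction from $\mathcal{U}\upr\mathcal{E}_0$ to $\mathcal{T}$ would require some $\tilde k$ with $y\in\mathcal{T}(z)\Rightarrow\tilde k(z,y)\in\mathcal{U}(z)$, whereas what you have is $y\in\mathcal{T}(\tilde h(z))\Rightarrow k(z,y)\in\mathcal{U}(z)$; these concern different fibers of $\mathcal{T}$ and there is no way to pass from one to the other. Likewise, the property $(\star)$ established in the priority construction only bites at pairs $(x,z)$ with $x\oplus d\equiv_T z$; applied with $x=\tilde h(z)$ it requires $\tilde h(z)\equiv_T z$, and nothing you have said rules out $\tilde h(z)<_T z$ on your pointed perfect set. (There is also a secondary problem: Martin measure is countably additive on \emph{Turing-invariant} sets, but the pieces $X_n$ witnessing $\sigma$-continuity of $h$ need not be Turing-invariant, so your appeal to countable additivity to locate $\mathcal{E}_0$ is not justified as written.)

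The paper's route is quite different and hinges precisely on degree-invariance. From $(\star)$ one first argues, exactly as in Lemma~\ref{lem:property-star}(1), that $h(z)\not\equiv_T z$ for $z$ in the pointed set $\mathcal{E}$, hence $h(z)<_T z$ on a Turing cone. Now the Slaman--Steel theorem (under ${\sf AD}$) says that a degree-invariant function which is strictly degree-decreasing on a cone is \emph{constant up to Turing degree} on a cone. With $h$ constant on a cone, any Baire-one uniformization $t$ of $\mathcal{T}$ makes $z\mapsto k(z,t(h(z)))$ computable from $z$ together with a fixed oracle, i.e.\ $\sigma$-continuous, and this uniformizes $\mathcal{U}$ on a cone---contradicting the hypothesis. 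The role of ${\sf AD}$ is thus not the perfect-set/Baire-property package you invoke, but the Slaman--Steel dichotomy for degree-invariant maps.
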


\subsection{Computable Results}\label{section:main-computable}

Now we consider the computable version of our main result.
For an oracle $z$, we say that a vein $\mathbb{V}=(\mathcal{V},{\rm rk}_\mathcal{V})$ of Borel rank $(1,2)$ is {\em $z$-computable} if the tree $\mathcal{V}\subseteq\om^{<\om}$ is $z$-computable, and the function ${\rm rk}_\mathcal{V}:\mathcal{V}\to\{0,1,2\}$ is computable.
A flow $\Lambda=(V,{\rm rk}_V,(P_\sigma)_{\sigma\in V},(f_\rho)_{\rho\in V^{\rm leaf}})$ on $\mathbb{V}$ is {\em $z$-computable} if the labeled well-founded tree $(V,{\rm rk}_V)$ is generated by a $z$-computable branching function, $P_\sigma$ is $\Pi^0_{{\rm rk}_V(\sigma)}(z)$ uniformly in $\sigma\in V$, and $f_\rho$ is partial $z$-computable uniformly in $\rho\in V^{\rm leaf}$, that is, there are $z$-computable functions $b:V^{\sf fin}\to\om$, $p:V\to\om$ and $\varphi:V^{\rm leaf}\to \om$ such that $(V,{\rm rk}_V)=(\mathcal{V}^b,{\rm rk}_{\mathcal{V}^b})$, $p(\sigma)$ is a $\Pi^0_{{\rm rk}_V(\sigma)}(z)$-index of $P_\sigma$, and $\varphi(\sigma)$ is an index of the partial $z$-computable function $f_\sigma$.

For a $z$-computable vein $\mathbb{V}$ we say that a function is {\em $\mathbb{V}$-piecewise $z$-computable} if it is of the form $f_\Lambda$ for some $z$-computable flow on $\mathbb{V}$.
We denote by $\mathbb{V}{\tt C}^z$ the class of $\mathbb{V}$-piecewise $z$-computable functions.
We also use the terminology such as $z$-computable $\mathbb{V}$-coWadge reducibility instead of $\mathbb{V}{\tt C}^z$-coWadge reducibility.

To state the computable version of our result, we need an effective version of $\sigma$-continuous non-uniformizability.
We first give a computability-theoretic interpretation of a $\sigma$-continuous uniformization.
One of the most fundamental results in Computability Theory is the equivalence between ``{\em (topological) continuity}'' and ``{\em oracle-computability}''.
This {\em relativization principle} for instance implies the following well-known fact in Computability Theory (see also Kihara \cite{Kih15}).

\begin{fact}[Folklore]
\hfill
\begin{enumerate}
\item A function $f:2^\om\to 2^\om$ is $\sigma$-continuous if and only if there is an oracle $z\in 2^\om$ such that $f(x)\leq_Tx\oplus z$ for any $x\in 2^\om$.
\item If a function $f:2^\om\to 2^\om$ is of Baire class $\alpha$ then there is an oracle $z\in 2^\om$ such that $f(x)\leq_T (x\oplus z)^{(\alpha)}$ for any $x\in 2^\om$.
\end{enumerate}
\end{fact}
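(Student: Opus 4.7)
The strategy is to exploit the relativization principle, which identifies continuous partial functions from subsets of $2^\om$ to $2^\om$ with partial functions computable from some oracle via a fixed Turing functional.

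For the easy direction of~(1), given an oracle $z$ satisfying $f(x)\leq_T x\oplus z$ for every $x\in 2^\om$, I would slice the space by Turing index. Set
\[X_e=\{x\in 2^\om:\Phi_e^{x\oplus z}\text{ is total and equals }f(x)\}\]
for each $e\in\om$. By hypothesis the family $\{X_e\}_{e\in\om}$ covers $2^\om$, and the restriction $f\upr X_e$ is continuous because the value of $f(x)(n)$ on $X_e$ is determined by a finite initial segment of $x\oplus z$ (namely, the use of $\Phi_e^{x\oplus z}(n)$). This witnesses $\sigma$-continuity.

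For the forward direction of~(1), write $2^\om=\bigcup_n X_n$ with each $f\upr X_n$ continuous. Each continuous partial map $f\upr X_n:X_n\to 2^\om$ is, by the relativization principle, computable from some oracle $z_n$, so that $f(x)\leq_T x\oplus z_n$ holds whenever $x\in X_n$. Taking the effective join $z=\bigoplus_n z_n$, which is Turing equivalent to the sequence $(z_n)_{n\in\om}$, one gets $f(x)\leq_T x\oplus z_n\leq_T x\oplus z$ for every $x\in X_n$ and hence for every $x\in 2^\om$.

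For~(2), I would argue by transfinite induction on $\alpha$. The base case $\alpha=0$ is the standard equivalence between continuity and uniform oracle-computability. For the successor step, assume $f$ is Baire class $\alpha+1$ and choose a sequence $(f_n)$ of functions of Baire class $\leq\alpha$ with $f(x)=\lim_n f_n(x)$ pointwise. The inductive hypothesis supplies, for each $n$, an oracle $z_n$ with $f_n(x)\leq_T (x\oplus z_n)^{(\alpha)}$; joining these into $z=\bigoplus_n z_n$ gives the uniform bound $f_n(x)\leq_T(x\oplus z)^{(\alpha)}$. The Shoenfield limit lemma, applied relative to the oracle $(x\oplus z)^{(\alpha)}$, then yields $f(x)=\lim_n f_n(x)\leq_T\bigl((x\oplus z)^{(\alpha)}\bigr)'=(x\oplus z)^{(\alpha+1)}$. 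For countable limit $\alpha$, Baire class $\alpha$ decomposes as pointwise limits of functions of lower Baire class, and a similar oracle-joining argument combined with a fixed computable presentation of $(x\oplus z)^{(\alpha)}$ as a uniform join of lower jumps handles the case.

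The main obstacle is this uniformity in the inductive step of~(2): the inductive hypothesis a priori gives a separate oracle $z_n$ for each approximating $f_n$, whereas the conclusion demands a single oracle that works simultaneously across all $n$. This is resolved by the same effective-join device already used in the forward direction of~(1), lifted through the jump hierarchy; the bookkeeping at limit ordinals is the only place where one must be careful to fix, in advance, a uniform presentation of the limit-stage jump.
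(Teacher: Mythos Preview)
The paper does not supply a proof of this fact: it is stated as folklore with a pointer to \cite{Kih15}, so there is no argument in the paper to compare your proposal against. Your sketch is the standard proof and is essentially correct.

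One point worth tightening in part~(2): for the Shoenfield limit lemma to yield $f(x)\leq_T((x\oplus z)^{(\alpha)})'$ you need the map $(n,m)\mapsto f_n(x)(m)$ to be $(x\oplus z)^{(\alpha)}$-computable \emph{uniformly in $n$}, not merely that each $f_n(x)$ is individually below $(x\oplus z)^{(\alpha)}$. Your join $z=\bigoplus_n z_n$ handles the oracles, but you also need to absorb into $z$ the sequence of Turing indices $(e_n)$ witnessing $f_n(x)=\Phi_{e_n}^{(x\oplus z_n)^{(\alpha)}}$; otherwise knowing $n$ alone does not tell you how to run the reduction. You do flag exactly this uniformity issue in your final paragraph, so this is more a matter of making the fix explicit than a genuine gap. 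The limit-ordinal case likewise needs a fixed notation system for $\alpha$ so that $(x\oplus z)^{(\alpha)}$ is well defined and uniformly presents the lower jumps, which you also note.
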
\medskip

\noindent As a corollary, we get the following characterization of a $\sigma$-continuous uniformization.
\begin{prop}
A multi-valued function $\mathcal{U}\subseteq 2^\om\times 2^\om$ admits a $\sigma$-continuous uniformization if and only if there is an oracle $z\in 2^\om$ such that $\mathcal{U}(x)$ has an $(x\oplus z)$-computable element for all $x\in{\rm dom}(\mathcal{U})$.
\end{prop}

We say that a function $f:2^\om\to 2^\om$ is {\em $\sigma$-computable} if there is a countable partition $(\mathcal{X}_n)_{n\in\om}$ of $2^\om$ such that $f\upr\mathcal{X}_n$ is computable for each $n\in\om$.
As in the above fact, one can easily see that $f$ is $\sigma$-computable if and only if $f(x)\leq_Tx$ for all $x$.
For this reason, $\sigma$-computability is traditionally called {\em non-uniform computability}.
We now observe the following property:

\begin{lemma}\label{lem:perfect-avoid}
For any compact set $\mathcal{U}\subseteq 2^\om\times 2^\omega$, if ${\rm dom}(\mathcal{U})$ is uncountable, and $\mathcal{U}$ does not admit a $\sigma$-continuous uniformization, then
\[N(\mathcal{U})=\{x\in 2^\om:\mathcal{U}(x)\mbox{ has no $x$-computable element}\}\]
has a perfect subset.
\end{lemma}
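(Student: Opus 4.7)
The plan is to split the argument into two parts: first, show that $N(\mathcal{U})$ is uncountable; second, show that $N(\mathcal{U})$ is Borel, and then invoke the classical Perfect Set Theorem (every uncountable analytic subset of a Polish space contains a perfect subset). The hinge is the effective characterization of $\sigma$-continuous uniformization stated just before this lemma: $\mathcal{U}$ admits a $\sigma$-continuous uniformization if and only if there is an oracle $z\in 2^\om$ such that, for every $x\in{\rm dom}(\mathcal{U})$, the fiber $\mathcal{U}(x)$ contains an $(x\oplus z)$-computable element.

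For the first step I would argue by contraposition. Assume that $N(\mathcal{U})$ is countable and enumerate $N(\mathcal{U})\cap{\rm dom}(\mathcal{U})$ as $\{x_n\}_{n\in\om}$. For each $n$ choose some $y_n\in\mathcal{U}(x_n)$, and let $z\in 2^\om$ uniformly code the sequence $(x_n,y_n)_{n\in\om}$. Given any $x\in{\rm dom}(\mathcal{U})$, there are two cases: if $x=x_n$ for some $n$, then $y_n\in\mathcal{U}(x)$ is $z$-computable, hence $(x\oplus z)$-computable; if $x\in{\rm dom}(\mathcal{U})\setminus N(\mathcal{U})$, then by the very definition of $N(\mathcal{U})$ the fiber $\mathcal{U}(x)$ already contains an $x$-computable element, a fortiori an $(x\oplus z)$-computable one. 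The preceding Proposition then produces a $\sigma$-continuous uniformization of $\mathcal{U}$, contradicting the hypothesis. (The uncountability of ${\rm dom}(\mathcal{U})$ is in fact automatic, since every compact relation with countable domain trivially admits a $\sigma$-continuous uniformization, so this hypothesis is really just for emphasis.)

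For the second step I would compute the descriptive complexity of $N(\mathcal{U})$ directly. The predicate ``$\mathcal{U}(x)$ contains an $x$-computable element'' unfolds to $\exists e\,[\,\Phi_e^x$ is total, $\{0,1\}$-valued, and $\Phi_e^x\in\mathcal{U}(x)\,]$. Totality is $\Pi^0_2$ in $x$ uniformly in $e$, and since $\mathcal{U}$ is compact (hence closed), the condition $\Phi_e^x\in\mathcal{U}(x)$ is also $\Pi^0_2$ in $x$ uniformly in $e$; the whole predicate is therefore $\Sigma^0_3$, and its negation $N(\mathcal{U})$ is $\Pi^0_3$ (intersection with the closed set ${\rm dom}(\mathcal{U})$ preserves this). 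Applying the Perfect Set Theorem to the uncountable Borel set $N(\mathcal{U})$ yields the required perfect subset.

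The main obstacle is the first step: one has to extract, from the purely topological assumption of non-$\sigma$-continuous-uniformizability, enough ``hard'' inputs to defeat \emph{every} countable oracle coding of potential selectors. This is precisely what the preceding Proposition accomplishes by rephrasing the topological assumption in computability-theoretic terms; once that bridge is in place, both the construction of the oracle $z$ and the Borel complexity computation are routine, and the Perfect Set Theorem then supplies the perfect subset automatically.
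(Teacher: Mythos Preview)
Your proof is correct and follows essentially the same approach as the paper's: show $N(\mathcal{U})$ is Borel, argue by contraposition that it must be uncountable (otherwise a single oracle $z$ coding the countably many exceptional points yields a $\sigma$-continuous uniformization via the preceding Proposition), and apply the Perfect Set Theorem. The paper's proof is simply a terser version of yours, omitting the explicit oracle construction and the complexity calculation.
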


\begin{proof}
It is easy to see that $N(\mathcal{U})$ is Borel.
Therefore, if it has no perfect subset, then it has to be countable.
If it is countable, $\mathcal{U}$ has a $\sigma$-computable uniformization except for countably many points.
This implies that $\mathcal{U}$ has a $\sigma$-continuous uniformization. 
\end{proof}

A set $\mathcal{E}\subseteq 2^\om$ is {\em computably perfect} if there is a computable pruned perfect tree $E\subseteq 2^{<\om}$ such that $\mathcal{E}$ consists of all infinite paths through $E$.
Clearly, every computably perfect set is pointed.
We say that $\mathcal{U}$ is {\em computably non-$\sigma$-uniformizable} if $N(\mathcal{U})$ has a computably-perfect subset $\mathcal{E}$.
For instance, the problems in Example \ref{example:non-uniformizable} are computably non-$\sigma$-uniformizable via $\mathcal{E}=2^\om$.



\begin{theorem}\label{thm:main3}
Let $\mathbb{V}$ be a strongly normal computable vein of Borel rank $(1,2)$.
For any nonempty $\Pi^0_1$ sets $\mathcal{S},\mathcal{U}\subseteq 2^\om\times 2^\om$, if $\mathcal{U}$ is computably non-$\sigma$-uniformizable, then there exists a $\Pi^0_1$ set $\mathcal{T}\subseteq 2^\om\times 2^\om$ such that
\begin{enumerate}
\item $\mathcal{T}$ is computably coWadge reducible to $\mathcal{S}$.
\item $\mathcal{S}$ is computably $\mathbb{V}^{\prime\prime}$-coWadge reducible to $\mathcal{T}$.
\item $\mathcal{U}$ is not computably $(\mathbb{V},\sigma)$-Weihrauch reducible to $\mathcal{T}$.
\end{enumerate}
\end{theorem}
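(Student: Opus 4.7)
The plan is to carry out the effective version of the forthcoming proof of Theorem~\ref{thm:main} (Section~\ref{section:proof-main}), replacing the use of a pointed perfect set of non-$\sigma$-uniformizable inputs by the computably-perfect witness $\mathcal{E}\subseteq N(\mathcal{U})$ supplied by the hypothesis. The desired $\Pi^0_1$ set $\mathcal{T}\subseteq 2^\om\times 2^\om$ is obtained by coupling $\mathcal{S}$ with an auxiliary ``routing'' component whose $\Pi^0_1$ structure mirrors the closure $\overline{\mathbb{V}^{\ominus 1}}$: each element of $\mathcal{T}(x)$ will be a pair $\pair{s,\alpha}$ with $s\in\mathcal{S}(x)$ and $\alpha$ coding a guess through the routing tree. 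The set of admissible $\alpha$'s will be a uniformly pointed $\Pi^0_1$-in-$x$ subset of $2^\om$, built by a priority construction along the nodes of $\mathbb{V}$. Strong normality (Lemma~\ref{lem:normal}) is used to stratify the construction: each $\Pi^0_2$-node is infinitely branching and each $\Pi^0_1$-node of non-top rank is only finitely branching with strictly smaller successor rank, which keeps the inductive construction well-founded.

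\textbf{Verification of (1) and (2).} Part (1) is immediate once $\mathcal{T}$ is built: the map $s\mapsto\pair{s,\alpha_0}$ for a fixed computable $\alpha_0$ (chosen to lie in every routing fiber) is a computable coWadge reduction from $\mathcal{T}$ to $\mathcal{S}$. For (2), given $\pair{s,\alpha}\in\mathcal{T}(x)$ one reads off $s$; the only nontrivial work is navigating the routing component, and by construction this requires a computable flow on $\overline{\mathbb{V}^{\ominus 1}}$ augmented by the top-level $\oplus({\rm rk}_\mathcal{V}(\pair{})+1)$ (or $\oplus 1$) increment that distinguishes $\mathbb{V}''$ from $\overline{\mathbb{V}^{\ominus 1}}$. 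Thus the extraction map is exactly computable $\mathbb{V}''$-piecewise, matching the definition of $\mathbb{V}''$.

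\textbf{Diagonalization for (3), and main obstacle.} Here is the core of the proof. Suppose for contradiction that $(h,k)$ witnesses that $\mathcal{U}$ is computably $(\mathbb{V},\sigma)$-Weihrauch reducible to $\mathcal{T}$, with $h$ $\sigma$-computable and $k$ $\mathbb{V}$-piecewise computable. By the effective form of Proposition~\ref{prop:weihrauch}, $k$ factors through ${\rm TP}_\Lambda$ for some computable flow $\Lambda$ on $\mathbb{V}$. Using the computably perfect tree $E$ defining $\mathcal{E}$, I simulate $\Lambda$ on $h(x)$ for $x$ varying along $E$: at each infinitely-branching $\Pi^0_2$-node of $\Lambda$ I commit to the leftmost branch compatible with the current approximation (this decision is $\Sigma^0_2$ in $x$, forced by a $\Pi^0_1$ game along $E$), and at each finitely-branching $\Pi^0_1$-node I resolve by direct inspection of the $\Pi^0_1$ indices of $\mathcal{T}$ and of the routing component. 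Because $\Lambda$ lives on $\mathbb{V}$ and not on $\mathbb{V}''$, it cannot see the extra $\ominus 1$-layer of routing information inside $\mathcal{T}$, and this forces $k(x,y)\leq_T x$ for some $x\in\mathcal{E}$ and $y\in\mathcal{T}(h(x))$, contradicting $x\in N(\mathcal{U})$. The main obstacle is that $\mathcal{T}$ must be built \emph{before} any individual reduction is seen, so the priority construction has to defeat every pair $(e_h,e_k)$ of computable indices simultaneously while keeping $\mathcal{T}$ uniformly $\Pi^0_1$; this bookkeeping, essentially transposed from the proof of Theorem~\ref{thm:main}, is where the argument becomes technical.
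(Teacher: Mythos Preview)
Your architecture for $\mathcal{T}$ cannot work as stated, and this is a genuine gap rather than a matter of bookkeeping. You propose that each point of $\mathcal{T}(x)$ is a pair $\langle s,\alpha\rangle$ with $s\in\mathcal{S}(x)$, and that the $\mathbb{V}''$-reduction in (2) ``reads off $s$''. But reading the first coordinate of a pair is continuous. If $\mathcal{S}$ is \emph{continuously} coWadge reducible to $\mathcal{T}$, then any $(\mathbb{V},\sigma)$-Weihrauch reduction of $\mathcal{U}$ to $\mathcal{S}$ composes with this projection to give a $(\mathbb{V},\sigma)$-Weihrauch reduction of $\mathcal{U}$ to $\mathcal{T}$, so (3) would fail whenever $\mathcal{U}$ happens to be so reducible to $\mathcal{S}$ (e.g.\ take $\mathcal{S}=\mathcal{U}$). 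The whole content of the theorem is that the reduction in (2) is \emph{genuinely} of complexity $\mathbb{V}''$ and not simpler; a product decomposition destroys this.

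The paper's construction avoids this by building $\mathcal{T}(z)$ as $\mathcal{S}(z)\cup\{\gamma^z_{e,i,j}(x):e,i,j\in\om,\ x\in\mathcal{S}(z)\}$, where each $\gamma^z_{e,i,j}$ is a dynamically defined tree monomorphism $\gamma^z_{e,i,j}=\lim_s\gamma^{z,e,i,j}_s$ that embeds a twisted copy of $\mathcal{S}(z)$ above a fresh string $\rho_{\langle e,i,j\rangle}$. The priority argument manipulates these embeddings stage by stage: at stage $s$, almost-terminal strategies of the weak-totalized flow $\Lambda_e^{\rm tot}$ watch the current embedding and are allowed to extend $\gamma_s(\alpha)$ so as to force $f_\lambda(\gamma_s(\alpha))$ to grow inside $U^z_{i,j}$. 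The non-$\sigma$-uniformizability of $\mathcal{U}$ on $\mathcal{E}$ is used in Lemma~\ref{lem:prio-converge}: if some $\gamma_s(\alpha)$ failed to converge, the limit would be a computable point of $\mathcal{U}$. The requirement $\mathcal{N}^z_{e,i,j}$ is then met by the single point $\gamma_{e,i,j}(x)$, not by a post-hoc simulation as you sketch. Finally, the $\mathbb{V}''$-recovery map in (2) is the inverse $\gamma^{\leftarrow}_{e,i,j}$, and Lemma~\ref{lem:global-requirement} shows that computing this inverse is precisely a $\mathbb{V}'$-piecewise problem (fiberwise) because one must read the true path of $\Lambda_e^{\rm tot}$ together with the stabilized priority value and length-of-agreement; this is where the $\ominus 1$ and the top $\oplus$ actually enter. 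Your ``navigating the routing component'' is not a separate step layered on top of a projection: it \emph{is} the entire reduction.
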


In this case, we do not require a $\sigma$-computable reduction to be degree invariant.
In particular, by effectivizing Proposition \ref{prop:weihrauch}, given such $\mathcal{S}$ and $\mathcal{U}$ we can effectively construct a compact-graph multifunction $\mathcal{T}$ on $2^\om$ such that
\begin{enumerate}
\item $\mathcal{S}$ is Weihrauch reducible to ${\rm TP}_{\Lambda'}\star\mathcal{T}$ for some flow $\Lambda'$ on $\mathbb{V}''$.
\item $\mathcal{U}$ is not Weihrauch reducible to ${\rm TP}_{\Lambda}\star\mathcal{T}$ for any flow $\Lambda$ on $\mathbb{V}$.
\end{enumerate}

In particular, Theorem \ref{thm:main3} implies the statement ($\dagger$) in Section \ref{sec:summary}.
We will give more details on how to verify the statement ($\dagger$) in the end of Section \ref{section:proof-main}.

\section{Proof of Main Theorems}\label{section:proof-main}

\subsection*{Convention}

Before starting the proof of our main theorems, without loss of generality, we may assume that
\[{\rm dom}(\mathcal{S})\mbox{ is uncountable, and }{\rm dom}(\mathcal{U})=2^\om.\]
This is because, if the domain of a uniformization problem is countable, then it is easy to see that the problem admits a $\sigma$-continuous uniformization.
Thus, ${\rm dom}(\mathcal{U})$ must be uncountable.
Moreover, it is easy to see that if a uniformization problem $\mathcal{U}$ is continuously $(\mathbb{V},\sigma)$-Weihrauch reducible to another problem $\mathcal{S}$, and if $\mathcal{S}$ admits a $\sigma$-continuous uniformization, then so does $\mathcal{U}$.
Therefore, if ${\rm dom}(\mathcal{S})$ is countable then $\mathcal{T}=\mathcal{S}$ satisfies the desired property.
It is known that each uncountable Polish space has a homeomorphic copy $\mathcal{C}$ of Cantor space $2^\om$.
Thus, we restrict the uniformization problem $\mathcal{U}$ to $\mathcal{C}$.
The difficulty of the uniformization problem $\mathcal{U}\upr\mathcal{C}$ is continuously equivalent to that of a uniformization problem whose domain is $2^\om$, and $\mathcal{U}\upr\mathcal{C}$ is reducible to $\mathcal{U}$.
Therefore, we can always assume that the domain of $\mathcal{U}$ is $2^\om$.
For an effective treatment, use the fact that every computably perfect computably presented Polish space has a computable homeomorphic copy of Cantor space.

\subsection*{Proof Strategy}

Suppose that $\mathcal{U}$ does not admit a $\sigma$-continuous uniformization on a pointed perfect set.
Then we first need the following lemma:

\begin{lemma}\label{lem:pointed}
If a compact-graph multifunction $\mathcal{U}$ on $2^\om$ does not admit a $\sigma$-continuous uniformization on a pointed perfect set, then there is a pointed perfect set $\mathcal{E}$ such that for every $x\in\mathcal{E}$, the value $\mathcal{U}(x)$ has no $x$-computable element.
\end{lemma}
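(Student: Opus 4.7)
The plan is to locate, inside the ``bad'' set
\[ N(\mathcal{U}) := \{x \in 2^\om : \mathcal{U}(x)\mbox{ has no } x\mbox{-computable element}\}, \]
a pointed perfect subset $\mathcal{E}$; this immediately gives the conclusion.

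Since $2^\om$ itself is pointed perfect (its tree is the full binary tree, which is trivially computable and computed by every real), the hypothesis specializes to saying that $\mathcal{U}$ admits no $\sigma$-continuous uniformization on $2^\om$. Together with ${\rm dom}(\mathcal{U}) = 2^\om$ being uncountable, Lemma \ref{lem:perfect-avoid} then yields that $N(\mathcal{U})$ contains a (topologically) perfect subset. Moreover, $N(\mathcal{U})$ is Borel: ``$\mathcal{U}(x)$ has an $x$-computable element'' is the Borel predicate $\exists e\,[\Phi_e^x\mbox{ is total}\,\wedge\,(x,\Phi_e^x)\in\mathcal{U}]$, using that $\mathcal{U}$ is closed. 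By the classical perfect set theorem for Borel sets, $N(\mathcal{U})$ contains a homeomorphic copy of $2^\om$; since $2^\om$ is compact and the ambient space is Hausdorff, this copy is automatically closed in $2^\om$. Call this closed perfect subset $C$.

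The final step is a standard Jockusch--Soare-style construction: every closed perfect subset of $2^\om$ contains a pointed perfect subset. Concretely, let $T$ be the pruned perfect tree for $C$ and fix an oracle $z$ computing $T$; at each splitting step in the construction of a perfect subtree $E\subseteq T$, after picking two incompatible successors in $T$, extend both by a common suffix whose final bit equals the next bit of $z$. The resulting $E$ is $z$-computable, every path through $E$ decodes $z$, and since $E\leq_T z$ every path also computes $E$ itself; hence $\mathcal{E}:=[E]$ is pointed perfect. As $\mathcal{E}\subseteq C\subseteq N(\mathcal{U})$, the conclusion follows. The only step with any genuine content is this last one, but it is entirely routine in computability theory.
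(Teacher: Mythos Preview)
Your argument has a genuine gap at the final step: the claim that every closed perfect subset of $2^\om$ contains a pointed perfect subset is false. Recall (as the paper notes) that a pointed perfect set $[E]$ contains representatives of \emph{every} Turing degree above $\deg(E)$. Now, by Sacks' theorem the upper cone $\{x:x\geq_T 0'\}$ has Lebesgue measure zero, so its complement contains a closed perfect set $C$. Any pointed perfect subset of $C$ would have to contain a real of degree $\geq_T 0'$, which is impossible since $C\subseteq\{x:x\not\geq_T 0'\}$. Hence $C$ is perfect but has no pointed perfect subset. Your proposed construction breaks precisely here: the instruction ``extend both by a common suffix whose final bit equals the next bit of $z$'' need not stay inside $T$, and if instead you try to encode $z$ through the left/right choices at splitting nodes of $T$, then decoding those choices from a path $y$ requires locating the splitting nodes of $T$, which in turn requires $T$ as an oracle---there is no reason $y\geq_T T$.

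The paper's proof avoids this by invoking Martin's Cone Theorem (via Borel determinacy, which suffices since $N(\mathcal{U})$ is Borel): either $N(\mathcal{U})$ or its complement contains a pointed perfect subset. If the complement $\{x:\mathcal{U}(x)\text{ has an }x\text{-computable element}\}$ contained a pointed perfect set $\mathcal{E}$, then $x\mapsto\Phi^x_{e(x)}$ (least such index) would be a $\sigma$-continuous uniformization of $\mathcal{U}$ on $\mathcal{E}$, contradicting the hypothesis. So $N(\mathcal{U})$ itself contains a pointed perfect subset. The appeal to determinacy here is doing real work that a direct perfect-set argument cannot replace.
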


\begin{proof}
By Martin's Cone Theorem (\cite{Martin68}; see also Marks-Slaman-Steel \cite[Lemma 3.5]{MarSlaSte}), $N(\mathcal{U})$ or its complement has a pointed perfect subset, where Borel determinacy is enough to show Martin's Cone Theorem for $N(\mathcal{U})$ because compactness of $\mathcal{U}$ implies its Borelness, and thus $N(\mathcal{U})$ is Borel.
Thus, $N(\mathcal{U})$ has a pointed perfect subset since its complement has no pointed perfect subset.
\end{proof}

Fix such $\mathcal{E}$, and let $d$ be a sufficiently powerful oracle making the pruned perfect tree $E$ be $d$-computable, and $\mathcal{T}$ be $\Pi^0_1(d)$.
The aim of this section is to show that the assertions (3) in Theorems \ref{thm:main} and \ref{thm:main3} and (4) in Theorem \ref{thm:main2} can be deduced from the following property:
\begin{itemize}[label=($\star$)]
\item For any $\mathbb{V}$-piecewise $r$-computable function $k$, 
\[(\forall x\in {\rm dom}(\mathcal{S}))(\forall z\in\mathcal{E})\;[r\leq_Tx\oplus d\equiv_Tz\;\rightarrow\;(\exists y\in\mathcal{T}(x))\;k(y)\not\in\mathcal{U}(z)].\]
\end{itemize}

\begin{lemma}\label{lem:property-star}
The property ($\star$) implies that:
\begin{enumerate}
\item $\mathcal{U}$ is not weakly $\mathbb{V}$-coWadge reducible to $\mathcal{T}$.
\item $\mathcal{U}$ is not computably $(\mathbb{V},\sigma)$-Weihrauch reducible to $\mathcal{T}$, whenever $d=\emptyset$ and $\mathcal{E}$ has a computable element.
\item $\mathcal{U}$ is not $(\mathbb{V},\sigma_{\rm inv})$-Weihrauch reducible to $\mathcal{T}$ under the axiom of determinacy.
\end{enumerate}
\end{lemma}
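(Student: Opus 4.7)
All three parts follow a common template: from the hypothesized reduction we extract a two-variable $\mathbb{V}$-piecewise continuous function $K$, fix its first argument at a well-chosen $z \in \mathcal{E}$ to obtain a single-variable function $K^{\ast}(y) := K(z, y)$ that is $\mathbb{V}$-piecewise $r$-computable for some $r \leq_T z$, and then invoke $(\star)$ with a matched pair $(x, z)$ to produce $y \in \mathcal{T}(x)$ with $K^{\ast}(y) \notin \mathcal{U}(z)$, contradicting the reduction evaluated at a natural point. As a preliminary, the coWadge equivalences in assertions (1)--(2) of the main theorems force ${\rm dom}(\mathcal{S}) = {\rm dom}(\mathcal{T})$; combined with ${\rm dom}(\mathcal{U}) = 2^\om$, any reduction from $\mathcal{U}$ into $\mathcal{T}$ under consideration forces ${\rm dom}(\mathcal{S}) = 2^\om$, so in particular $\mathcal{E} \subseteq {\rm dom}(\mathcal{S})$.

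For (1), the weak $\mathbb{V}$-coWadge reduction supplies a $\mathbb{V}$-piecewise continuous $K$ with $y \in \mathcal{T}(x) \Rightarrow K(x, y) \in \mathcal{U}(x)$; let $r_0$ witness that $K$ is $\mathbb{V}$-piecewise $r_0$-computable, pick $z \in \mathcal{E}$ with $z \geq_T r_0 \oplus d$, and apply $(\star)$ at $x = z$ (so $x \oplus d \equiv_T z$ since $d \leq_T z$) with $r = z$: the resulting $y \in \mathcal{T}(z)$ satisfies $K(z, y) \notin \mathcal{U}(z)$, contradicting the reduction. For (2), with $d = \emptyset$ and a computable $z_0 \in \mathcal{E}$, the computable $(\mathbb{V}, \sigma)$-Weihrauch reduction provides a $\sigma$-computable $h$ (so $h(z_0) \leq_T z_0$ is computable and lies in ${\rm dom}(\mathcal{T}) = {\rm dom}(\mathcal{S})$) together with a $\mathbb{V}$-piecewise computable $K$ satisfying $y \in \mathcal{T}(h(x)) \Rightarrow K(x, y) \in \mathcal{U}(x)$; applying $(\star)$ with $r = \emptyset$, $x = h(z_0)$, $z = z_0$, and $K^{\ast}(y) = K(z_0, y)$ yields $y \in \mathcal{T}(h(z_0))$ with $K(z_0, y) \notin \mathcal{U}(z_0)$, contradicting the reduction at $z_0$.

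For (3), the $(\mathbb{V}, \sigma_{\rm inv})$-Weihrauch reduction provides a degree-invariant $\sigma$-continuous $h$ (with $h(x) \leq_T x \oplus c_0$ for some $c_0$) and a $\mathbb{V}$-piecewise continuous $K$ that is $\mathbb{V}$-piecewise $r_0$-computable for some $r_0$. Under ${\sf ZF} + {\sf DC} + {\sf AD}$, Martin's Cone Theorem applied to the induced map on Turing degrees gives, on a cone above some $c$, one of two cases: either $h(z) \equiv_T z$ throughout the cone, in which case we pick $z \in \mathcal{E}$ above $c \oplus c_0 \oplus r_0 \oplus d$ and apply $(\star)$ with $x = h(z)$ and $K^{\ast}(y) = K(z, y)$, using $h(z) \equiv_T z$ to verify the hypothesis $r \leq_T x \oplus d \equiv_T z$ and then deducing the contradiction exactly as in part (1); or $h(z) \equiv_T h_0$ for some fixed $h_0$, in which case, for $z \in \mathcal{E}$ above the cone and above $(h_0 \oplus d)' \oplus r_0$, the $\Pi^0_1$ set $\mathcal{T}(h(z))$ is uniformly coded by $h_0 \oplus d$ and admits by the low basis theorem an element $y_0$ computable from $(h_0 \oplus d)'$, so $K(z, y_0) \in \mathcal{U}(z)$ is $z$-computable, contradicting $z \in \mathcal{E} \subseteq N(\mathcal{U})$. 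The principal obstacle is precisely this third part, where Martin's Cone Theorem (and hence AD) is what controls the otherwise arbitrary degree-invariant preprocessing $h$ and lets us reduce to the rigid situation treated in (1) and (2).
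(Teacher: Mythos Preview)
Your treatment of parts (1) and (2) is essentially the paper's argument. The one caveat is your preliminary claim that the hypothesized reduction forces ${\rm dom}(\mathcal{T})=2^\om$: this is defensible under the ``every uniformization of $\mathcal{T}$ yields a uniformization of $\mathcal{U}$'' reading of weak coWadge reducibility, but not under the pointwise ``$y\in\mathcal{T}(x)\Rightarrow k(x,y)\in\mathcal{U}(x)$'' reading. The paper itself is silent on this domain point, so you are not introducing a new gap, but be aware that your justification depends on which formulation you take.

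For part (3) there is a genuine gap in the attribution. Martin's Cone Theorem tells you that the degree-invariant set $\{z:h(z)\equiv_T z\}$ either contains or misses a cone; combined with $h(z)\leq_T z$ on a cone this gives either $h(z)\equiv_T z$ on a cone or $h(z)<_T z$ on a cone. But the step from ``$h(z)<_T z$ on a cone'' to ``$h$ is constant on a cone'' is \emph{not} Martin's Cone Theorem---it is the Slaman--Steel theorem (the known case of Martin's Conjecture for functions below the identity), which is exactly what the paper invokes. You should cite Slaman--Steel for the dichotomy, not Martin.

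Once the dichotomy is in hand, your endgame in the constant case differs from the paper's and is worth noting. The paper takes a global route: fixing any Baire-one uniformization $t$ of $\mathcal{T}$, the map $z\mapsto k(z,t(h(z)))$ is $\sigma$-continuous (since $t(h(z))$ has bounded degree on a cone), and this map uniformizes $\mathcal{U}$, contradicting the non-$\sigma$-uniformizability hypothesis. You instead work pointwise: for a single $z\in\mathcal{E}$ far enough up the cone, the Low Basis Theorem produces $y_0\in\mathcal{T}(h(z))$ with $y_0\leq_T(h_0\oplus d)'\leq_T z$, whence $K(z,y_0)\leq_T z$ lies in $\mathcal{U}(z)$, contradicting $z\in N(\mathcal{U})$. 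Both arguments are valid; yours is a bit more direct and only needs the pointwise property of $\mathcal{E}$ rather than the full non-$\sigma$-uniformizability of $\mathcal{U}$.
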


\begin{proof}
(1)
Suppose that $\mathcal{U}$ is weakly $\mathbb{V}$-coWadge reducible to $\mathcal{T}$.
Then there is a $\mathbb{V}$-piecewise $r$-computable function $k$ such that for any uniformization $t$ of $\mathcal{T}$, the composition $k\circ ({\rm id},t)$ uniformizes $\mathcal{U}$, that is, $k(x,t(x))\in\mathcal{U}(x)$ for all $x\in 2^\om$.
Choose $x\in\mathcal{E}$ with $r\leq_Tx$.
Such $x$ exists since $\mathcal{E}$ is pointed.
The function $k_0$ defined by $k_0(y)=k(x,y)$ for any $y$ is also $\mathbb{V}$-piecewise $x$-computable.
Therefore, by the property ($\star$), there is a uniformization $t_0$ of $\mathcal{T}$ such that $k_0\circ t_0(x)\not\in\mathcal{U}(x)$.
However this contradicts our assumption since $k(x,t_0(x))=k_0\circ t_0(x)$.

(2)
Next suppose for the sake of the contradiction that $\mathcal{U}$ is computably $(\mathbb{V},\sigma)$-Weihrauch reducible to $\mathcal{T}$.
Then there are a $\mathbb{V}$-piecewise computable function $k$ and a $\sigma$-computable function $h$ such that for any uniformization $t$ of $\mathcal{T}$, the function $k\circ({\rm id},t\circ h)$ uniformizes $\mathcal{U}$, that is, $k(z,t(h(z)))\in\mathcal{U}(z)$ for all $z\in 2^\om$.
Let $z\in\mathcal{E}$ be a computable element.
Then, $h(z)$ is also computable since $\sigma$-computability of $h$ implies that $h(z)\leq_Tz$.
In particular, $z\equiv_Th(z)$.
Moreover, the function $k_0$ defined by $k_0(y)=k(z,y)$ for any $y$ is also $\mathbb{V}$-piecewise computable.
Therefore, by the property ($\star$), there is a uniformization $t_0$ of $\mathcal{T}$ such that $k_0\circ t_0(h(z))\not\in\mathcal{U}(z)$.
However this contradicts our assumption since $k(z,t_0(h(z)))=k_0\circ t_0(h(z))$.

(3)
Suppose that $\mathcal{U}$ is $(\mathbb{V},\sigma_{\rm inv})$-Weihrauch reducible to $\mathcal{T}$, that is, for a sufficiently powerful oracle $r$, there are a $\mathbb{V}$-piecewise $r$-computable function $k$ and a degree-invariant $\sigma$-computable-relative-to-$r$ function $h$ such that for any uniformization $t$ of $\mathcal{T}$, the function $k\circ ({\rm id},t\circ h)$ uniformizes $\mathcal{U}$, that is, $k(z,t(h(z)))\in\mathcal{U}(z)$ for all $z\in 2^\om$.
By $\sigma$-computability of $h$ relative to $r$, we always have $h(z)\leq_Tz$ for all $z\geq_Tr$.
As before, if $z\in\mathcal{E}$ then we cannot have $r\leq_Th(z)\oplus d\equiv_Tz$; therefore, it must hold that $h(z)<_Tz$ for all $z\geq_Tr\oplus d$ and $z\in\mathcal{E}$.
In particular, $h(z)<_Tz$ on a cone, that is, there is $c\in 2^\om$ such that $h(z)<_Tz$ for all $z\geq_Tc$ since $h$ is degree-invariant and $\mathcal{E}$ is pointed (that is, $\mathcal{E}$ contains a Turing cone).
By degree-invariance of $h$, we can use the Slaman-Steel Theorem \cite[Theorem 2]{SlaSte88} to get that $h$ is constant on a cone, that is, there are $c,y\in 2^\om$ such that $h(z)\equiv_Ty$ for all $z\geq_Tc$.
Now recall that every compact set admits a Baire-one uniformization, so let $t$ be such a Baire-one uniformization of $\mathcal{T}$.
In particular, whenever $z\geq_Tc$, we have $t(h(z))\leq_T(y\oplus u)'$ for some oracle $u\in 2^\om$.
Therefore, $k(z,t(h(z)))\leq_Tr\oplus z\oplus (y\oplus u)'$ holds for all $z\geq_Tc$.
Note that $v:=c\oplus r\oplus (y\oplus u)'$ is a constant, and we have $k(z,t(h(z)))\leq_Tz\oplus v$ for all $z\in 2^\om$.
This would imply that $z\mapsto k(z,t(h(z)))$ is $\sigma$-continuous.
However, since $k(z,t(h(z)))\in\mathcal{U}(z)$, this would give a $\sigma$-continuous uniformization of $\mathcal{U}$, which is a contradiction.
Note that if we only consider degree-invariant $\sigma$-continuous {\em Borel} functions, then we can avoid the use of the axiom of determinacy.
\end{proof}

We do not know whether the property ($\star$) implies the similar separation result for $(\mathbb{V},\sigma)$-Weihrauch reducibility as well.

\subsection*{Approximation of Trees}

To prove the main theorems, we will need the property ($\star$).
We first fix a sufficiently powerful oracle $d$ which ensures that both $\mathcal{S}$ and $\mathcal{U}$ are $\Pi^0_1(d)$.
Let $\mathcal{E}$ be a pointed perfect set as in Lemma \ref{lem:pointed}, so that $\mathcal{U}(z)$ has no $z$-computable element for any $z\in\mathcal{E}$.
Without loss of generality, we may assume that $d\equiv_TE$, where $E$ is a pointed tree generating $\mathcal{E}$.
This is because, for a canonical $E$-computable homeomorphism $\psi:2^\om\to\mathcal{E}$, we consider $\mathcal{E}_d=\{\psi(x\oplus d):x\in 2^\om\}$.
It is easy to see that $\mathcal{E}_d$ is $\Pi^0_1(d\oplus E)$, and that $z\geq_Td\oplus E$ holds for all $z\in\mathcal{E}_d$.
Then, we replace $d$ and $\mathcal{E}$ with $d\oplus E$ and $\mathcal{E}_d$, respectively.

Our $d$-computable construction of a compact set $\mathcal{T}$ will be {\em fiber-wise}, that is, we will construct an $(x\oplus d)$-computable tree $T(x)$ uniformly in $x$ (where $\mathcal{T}(x)$ is the $x$-th fiber of the projection of $\mathcal{T}$ into the first coordinate).
Hereafter, by $S(x)$, $T(x)$ and $U(x)$ we denote the $(x\oplus d)$-computable trees whose infinite paths form the fibers $\mathcal{S}(x)$, $\mathcal{T}(x)$ and $\mathcal{U}(x)$, respectively.
Such trees exist since $\mathcal{S}(x)$, $\mathcal{T}(x)$ and $\mathcal{U}(x)$ are $\Pi^0_1(x\oplus d)$ subset of $2^\om$ uniformly in $x$.
On each fiber $T(x)$ our strategy looks at the fibers $(U(z):x\oplus d\equiv_Tz\in\mathcal{E})$.
If $x\oplus d\equiv_Tz\in\mathcal{E}$ then, in particular, $z\geq_Td$, and therefore, by the property of $d$ mentioned above, $\mathcal{U}(z)$ has no $(x\oplus d)$-computable element.

Now we describe a uniform $(x\oplus d)$-computable approximation of the collection $(U(z):x\oplus d\equiv_Tz\in\mathcal{E})$.
Let $\Phi_i^d$ be the $i$-th partial $d$-computable function, and $\Phi_j$ be the $j$-th partial computable function.
If $z\equiv_Tx\oplus d$ then there are indices $i$ and $j$ such that $z=\Phi^d_i(x)$ and $x\oplus d=\Phi_j(z)$.
In particular, $\Phi_j\circ\Phi_i^d(x)=x\oplus d$.
We will define a tree $U^{x}_{i,j}$ for each pair $(i,j)$ of indices such that if $\Phi_j\circ\Phi_i^d(x)=x\oplus d$ and $\Phi_i^d(x)\in\mathcal{E}$ hold then $U^{x}_{i,j}=U(\Phi_i^d(x))$; otherwise $U^{x}_{i,j}$ is a finite tree.
This ensures that $U^x_{i,j}$ has no $(x\oplus d)$-computable infinite path for any $i,j\in\om$.

We first note that since $\mathcal{U}$ is $\Pi^0_1(d)$, there is a $d$-computable map sending each $z$ into a $\Pi^0_1(z\oplus d)$-code of the fiber $\mathcal{U}(z)$.
In other words, it is straightforward to see that there is a uniformly $d$-computable way of approximating all fibers of $\mathcal{U}$ as follows:
\begin{itemize}
\item Given a string $\tau\in 2^{<\om}$, $U(\tau)$ is a finite tree of height $|\tau|$.
\item If $\sigma\prec\tau$ then $U(\tau)\setminus U(\sigma)$ consists only of strings of length greater than $|\sigma|$.
\item $U(z)=\bigcup_nU(z\upr n)$ for all $z\in 2^\om$.
\end{itemize}

Given $\sigma\in 2^{<\om}$, as usual, by $\Phi_i(\sigma)$ we denote a binary string obtained by the stage $|\sigma|$-approximation of the $i$-th Turing machine computation $\Phi_i$ by using $\sigma$ as an oracle.
Given $s$, let $\ell_{i,j}[s]$ be the maximal length $\ell\in\om$ such that
\[\Phi_j\circ\Phi_i^d(x\upr s)\upr \ell=(x\oplus d)\upr \ell\mbox{, and }\Phi_i^d(x\upr s)\upr \ell\in E.\]

Then we define the stage $s$-approximation of $U^x_{i,j}$ as follows:
\[U^x_{i,j}[s]=\{\sigma\in U(\Phi_i^d(x\upr s)):|\sigma|<l_{i,j}[s]\}.\]
It is not hard to see that $U^x_{i,j}:=\bigcup_{s\in\om}U^x_{i,j}[s]$ satisfies the desired condition, that is, if $z\in\mathcal{E}$ and $z\equiv_Tx\oplus d$ via indices $(i,j)$ then $U^x_{i,j}=U(z)$, and if $(i,j)$ is not a correct pair of indices then $U^x_{i,j}$ is finite.

\subsection*{Enumeration of Flows}

To show our main theorems, we need the notion of a partial flow.
A {\rm partial flow} on a vein $\mathbb{V}$ is a pair $\Lambda=(\mathbf{V},\Gamma)$ of a labeled well-founded tree $\mathbf{V}=(V,{\rm rk}_V)$ and a {\em partial} flowchart $\Gamma$ on $\mathbf{V}$ such that $\mathbf{V}$ is a labeled subtree of $\mathbb{V}^b$ for some branching function $b$.
Here, a partial flowchart $\Gamma$ is a tuple $(P_\xi,f_\xi)_{\xi\in V}$ such that $P_\xi$ is a $\mathbf{\Pi}^0_{{\rm rk}_V(\xi)}$ set and $f_\xi$ is a partial continuous function.
As in Definition \ref{def:labeledwellfounded}, for a given $x$, the leftmost leaf $\rho\in V^{\rm leaf}$ such that $x\in\bigcap_{\sigma\preceq\rho}P_\sigma$ is said to be the true path of $\Lambda$ along $x$, and written as ${\rm TP}_\Lambda(x)$, {\em if such $\rho$ exists}.
Here, note that we do {\em not} require that $(P_{\xi\fr n})_{n}$ be a cover of $P_\xi$, and the domain of $f_\xi$ include $D_\xi$, where $D_\xi$ is the set of all $x$ such that ${\rm TP}_\Lambda(x)$ is defined, and ${\rm TP}_\Lambda(x)=\xi$.
A partial flow $\Lambda$ always defines a {\em partial} function $f_\Lambda$ by $f_\Lambda(x)=f_{{\rm TP}_\Lambda(x)}(x)$ for any $x$ such that ${\rm TP}_\Lambda(x)$ defines some value $\xi$ and $x\in {\rm dom}(f_\xi)$.
The set of all such $x$'s is called the actual domain of $\Lambda$, and denoted by ${\rm dom}(\Lambda)$, which is now possibly different from $P_{\langle\rangle}$.

Hereafter we assume that all veins $\mathbb{V}$ are of Borel rank $(1,2)$.
Let $\mathbb{V}=(\mathcal{V},{\rm rk}_\mathcal{V})$ be a vein, and $\Lambda=(V,{\rm rk}_V,(P_\xi),(f_\xi))$ be a partial flow on the vein $\mathbb{V}$, where $(V,{\rm rk}_V)$ is of the form $(\mathcal{V}^b,{\rm rk}_{\mathcal{V}^b})$ for some branching function $b$.
Then, one can define a $\Pi^0_2$ ($\Pi^0_1$, resp.)~formula $p$ ($q$, resp.) on $V\times\om\times 2^{<\om}\times 2^{<\om}$ (with a parameter $z$), a partial function $\eta:\subseteq V\times\om\to 2^{<\om}$, and a partial continuous function $f:\subseteq V^{\rm leaf}\times 2^\om\to 2^\om$ as follows:
\begin{align*}
x\in P_{\xi\fr n}\;&\iff\;
\begin{cases}
(\forall s)(\exists t\geq s)\;p(\xi,n,x\upr t,z\upr t)&\mbox{ if ${\rm rk}_V(\xi)=2$},\\
(\forall s)\;q(\xi,n,x\upr s,z\upr s)&\mbox{ if ${\rm rk}_V(\xi)=1$},\\
x\upr |\eta(\xi,n)|=\eta(\xi,n)&\mbox{ if ${\rm rk}_V(\xi)=0$},
\end{cases}\\
f_\xi(x)&=f(\xi,x)\qquad\mbox{if $\xi\in V^{\rm leaf}$ and $x\in{\rm dom}(f_\xi)$.}
\end{align*}
Here recall our convention mentioned after Lemma \ref{lem:normal} that a rank $0$ set assigned to a rank $0$ node is generated by a single binary string. 
If we know all information on $(b,p,q,\eta,f)$ and $z$, then we can recover the partial flow $\Lambda$.
We now introduce an enumeration of (partial) flows on a fixed vein $\mathbb{V}=(\mathcal{V},{\rm rk}_\mathcal{V})$.

Given an oracle $z\in 2^\om$ and $e=\pair{e_0,e_1,e_2,e_3,e_4}$, consider the tuple $(\varphi^z_{e_0},p_{e_1},q_{e_2},\eta^z_{e_3},\Phi^z_{e_4})$, where $\varphi^z_{d}:\subseteq\mathcal{V}^{\sf fin}\to\om$ is the $d$-th partial $z$-computable function, $p_d$ ($q_d$, resp.)~is the $d$-th $\Pi^0_2$ ($\Pi^0_1$, resp.)~formula on $\om^{<\om}\times\om\times 2^{<\om}\times 2^{<\om}$, $\eta_d^z:\om^{<\om}\times\om\to 2^{<\om}$ is the $d$-th partial $z$-computable function, and $\Phi_d:\subseteq \om^{<\om}\times 2^\om\to 2^\om$ is the $d$-th partial computable function.

Note that our branching function $\varphi^z_{e_0}$ is partial, so we need to describe how to produce a labeled well-founded tree from a partial branching function.
The idea is that we put successors of $\sigma$ into our tree only after $\varphi^z_{e_0}(\sigma)$ returns some outcome.
In other words, if the computation $\varphi^z_{e_0}(\sigma)$ never halts, all successors of $\sigma$ would vanish.
We also note that $\eta^z_{e_3}$ is partial as well, so we also require $\eta^z_{e_3}(\sigma,n)$ to be defined before putting the node $\sigma\fr n$ into our tree. 
Based on the above idea, we define $\mathbf{V}^z_{e,s}=(V^z_{e,s},{\rm rk}^z_{e,s})$, {\em the stage $s$ approximation of the $e$-th $z$-computable branching of $\mathbb{V}$} (with a {\em copy-source-referring} function $\iota:V^z_{e,s}\to \mathcal{V}$), as follows:
\begin{enumerate}
\item $\pair{}\in V^z_{e,s}$ and $\iota(\pair{})=\pair{}$.
\item If $\sigma\in V^z_{e,s}$, ${\rm br}_\mathcal{V}(\iota(\sigma))=1$ and the computation $\varphi^z_{e_0}(\iota(\sigma))$ converges by stage $s$, then $\sigma$ is converted into a $\varphi^z_{e_0}(\iota(\sigma))$-branching node, that is,
\begin{align*}
\pair{{\rm rk}^z_{e,s}(\sigma),{\rm br}_{V^z_{e,s}}(\sigma)}=&\pair{{\rm rk}_\mathcal{V}(\iota(\sigma)),\varphi^z_{e_0}(\iota(\sigma))},\\
\sigma\fr n\in V^z_{e,s}, \quad\quad & \iota(\sigma\fr n)=\iota(\sigma)\fr\ast,
\end{align*}
for every $n<\varphi^z_{e_0}(\sigma)$.
Here $\iota(\sigma)\fr\ast$ is the unique immediate successor of $\iota(\sigma)$ in $\mathcal{V}$.
\item If $\sigma\in V^z_{e,s}$ and ${\rm br}_\mathcal{V}(\iota(\sigma))=\om$, then we proceed as follows:
First, if the Borel rank of $\sigma$ is greater than $0$, then $\sigma$ remains the same as the copy source node $\iota(\sigma)$.
If the Borel rank of $\sigma$ is $0$ and moreover the associated rank $0$ set $\eta^z_e(\sigma,n)$ is determined at stage $s$, then $\sigma$ remains the same as the copy source node $\iota(\sigma)$, as well.
Otherwise (that is, the $e_3$-th computation has not yet computed the associated rank $0$ set $\eta^z_{e_3}(\sigma,n)$ at stage $s$) we do not put $\sigma\fr n$.
Formally speaking, we always define
\[\pair{{\rm rk}^z_{e,s}(\sigma),{\rm br}_{V^z_{e,s}}(\sigma)}=\pair{{\rm rk}_\mathcal{V}(\iota(\sigma)),\om},\]
and moreover, for any $n$ such that $\iota(\sigma)\fr n\in \mathcal{V}$, 
\[\sigma\fr n\in V^z_{e,s}, \quad\quad  \iota(\sigma\fr n)=\iota(\sigma)\fr n,\]
 if ${\rm rk}_\mathcal{V}(\sigma)>0$; or if ${\rm rk}_\mathcal{V}(\sigma)=0$ and the computation $\eta^z_{e_3}(\sigma,n)$ converges by stage $s$.
\item If $\sigma\in V^z_{e,s}$ and ${\rm br}_\mathcal{V}(\iota(\sigma))=0$, then $\sigma$ remains the same as the copy source node $\iota(\sigma)$, that is,
\begin{align*}
\pair{{\rm rk}^z_{e,s}(\sigma),{\rm br}_{V^z_{e,s}}(\sigma)}=&\pair{{\rm rk}_\mathcal{V}(\iota(\sigma)),0}.
\end{align*}
Moreover, we put $\sigma$ into $V^{z,{\rm leaf}}_{e,s}$.
Note that even if $\sigma$ is a leaf of $V^z_{e,s}$, the leaf $\sigma$ may not belong to $V^{z,{\rm leaf}}_{e,s}$.
\end{enumerate}\medskip

\noindent We call $\Lambda^z_{e,s}=(V^z_{e,s},{\rm rk}^z_{e,s},p_{e_1},q_{e_2},\eta^z_{e_3},f^z_{e_4})$ the {\em stage $s$ approximation of the $e$-th partial $z$-computable flow} on $\mathbb{V}$.
We then recover $(P^{z,e}_\xi:\xi\in V^z_{e,s})$ by using the above mentioned equivalence.

\subsection*{Weak-Totalization of Flows}

Without loss of generality, we can always assume that for any $\xi\in V^z_{e,s}$, if $\xi$ is finitely branching and $\varphi^z_{e_0}(\iota(\xi))$ is defined by stage $s$ then $(P^{z,e}_{\xi\fr n})$ covers $P^{z,e}_\xi$ (by assuming that the rightmost immediate successor of a finite branching node accepts all reals $x$).
However, it is not generally true for infinite branching nodes.
To get the covering property for infinite branching nodes (by modifying our tree $V^z_{e,s}$), we note that, since our vein is strongly normal, the predecessor of an infinite branching node $\zeta$ of positive length is finitely branching and ${\rm rk}^z_{e,s}(\zeta)<{\rm rk}^z_{e,s}(\zeta^-)$.
Now consider a finite branching node $\xi\in V^z_{e,s}$ with successors $(\xi\fr n)_{n<c}$.
We double the number of branches of $\xi$, and consider:
\[P^{z,e}_{\xi\fr 2n}=2^\om\setminus\bigcup_mP^{z,e}_{\xi\fr n\fr m},\; P^{z,e}_{\xi\fr 2n+1}=P^{z,e}_{\xi\fr n},\mbox{ and }P^{z,e}_{\xi\fr 2n+1\fr m}=P^{z,e}_{\xi\fr n\fr m}.\]
Note that the Borel complexity of $P^{z,e}_{\xi\fr 2n}$ is ${\rm rk}^z_e(\xi\fr n)+1\leq{\rm rk}^z_e(\xi)$ by normality, and $P^{z,e}_{\xi\fr n}$ is covered by $(P^{z,e}_{\xi\fr n\fr m})_m$.
It is clear that this modification does not produce any change on the generated function $f_{\Lambda^z_e}$.
We call this procedure the {\em weak-totalization} of a given vein.
We will give a formal description of weak-totalization below.

A partial flow $\Lambda=(V,{\rm rk}_V,p,q,\eta,f)$ automatically yields $(P_\xi)_{\xi\in V}$ where $P_{\pair{}}=2^\om$ (note that $P_{\pair{}}$ is possibly different from the actual domain of the generated function $f_\Lambda$).
Such a flow is said to be {\em weakly total} if for every non-terminal $\xi\in V$, $P_\xi$ is covered by $(P_{\xi\fr n})_{n}$.
Let $\Lambda=(V,{\rm rk}_V,p,q,\eta,f)$ be a partial flow on a vein $\mathbb{V}$.
We first note that for any $\sigma\in V$ we have that ${\rm rk}_V(\sigma\fr i)={\rm rk}_V(\sigma\fr j)$ and that ${\rm br}_V(\sigma\fr i)=\om$ if and only if ${\rm br}_V(\sigma\fr j)=\om$ whenever $\sigma\fr i,\sigma\fr j\in V$ since $V$ is obtained as the $b$-branching of a vein for some branching function $b$.
Now we focus on a non-terminal string $\sigma\in V$ such that ${\rm br}_V(\sigma)<\om$ and ${\rm br}_V(\sigma\fr j)=\om$ for some/any $j$.
Let $V^{\ast}$ be the set of all such strings.
We define the {\em weak-totalization} $\Lambda^{\rm tot}=(V^{\rm tot},{\rm rk}_V^{\rm tot},p^{\rm tot},q^{\rm tot},\eta^{\rm tot},f^{\rm tot})$ (with a copy-source-referring function $\iota:V^{\rm tot}\to V$) as follows:
\begin{enumerate}
\item 
If the root of $V$ is infinitely branching, i.e., ${\rm br}_V(\pair{})=\om$, then we add a new two-branching node above the root:
\begin{align*}
\pair{{\rm rk}_V^{\rm tot}(\pair{}),{\rm br}_{V^{\rm tot}}(\pair{})}&=\pair{{\rm rk}_V(\pair{})+1,2},\\
\pair{},\pair{0},\pair{1}\in V^{\rm tot},\;& \iota(\pair{0})=\star,\;\iota(\pair{1})=\pair{}.
\end{align*}
Here $\star$ is a fixed new symbol which is not contained in $V$ (see the items (4)--(5)).
\item If $\sigma\in V^{{\rm tot}}$, $\iota(\sigma)\in V^\ast$, then we double the number of branches of $\sigma$, that is,
\begin{align*}
\pair{{\rm rk}_V^{\rm tot}(\sigma),{\rm br}_{V^{\rm tot}}(\sigma)}=&\pair{{\rm rk}_V(\iota(\sigma)),2\cdot{\rm br}_V(\iota(\sigma))},\\
\sigma\fr 2n\in V^{\rm tot}, \quad\quad & \iota(\sigma\fr 2n)=\star,\\
\sigma\fr 2n+1\in V^{\rm tot}, \qquad & \iota(\sigma\fr 2n+1)=\iota(\sigma)\fr n
\end{align*}
for every $n<{\rm br}_V(\iota(\sigma))$ such that $\iota(\sigma)\fr n\in V$.
\item If $\sigma\in V^{\rm tot}$ and $\iota(\sigma)\not\in V^\ast$, then $\sigma$ remains unchanged, that is,
\begin{align*}
\pair{{\rm rk}_V^{\rm tot}(\sigma),{\rm br}_{V^{\rm tot}}(\sigma)}&=\pair{{\rm rk}_V(\iota(\sigma)),{\rm br}_V(\iota(\sigma))},\\
\sigma\fr n\in V^{\rm tot}, \quad\quad & \iota(\sigma\fr n)=\iota(\sigma)\fr n,
\end{align*}
for every $n\in\om$ such that $\iota(\sigma)\fr n\in V$.
Moreover, if $\iota(\sigma)\in V^{{\rm leaf}}$, then we declare that $\sigma\in V^{{\rm tot},{\rm leaf}}$.
\item If $\sigma\in V^{\rm tot}$ and $\iota(\sigma)=\star$, then we declare that $\sigma$ is a leaf, that is,
\begin{align*}
\pair{{\rm rk}_V^{\rm tot}(\sigma),{\rm br}_{V^{\rm tot}}(\sigma)}=&\pair{0,0}.
\end{align*}
and we declare that $\sigma\in V^{{\rm tot},{\rm leaf}}$.
\item For every $\sigma\in V^{\rm tot}$, if $\iota(\sigma)\in V^\ast$, say ${\rm rk}_V(\iota(\sigma))=2$ and ${\rm rk}_V(\iota(\sigma)\fr j)=1$ for some/any $j$, then we define 
\begin{align*}
p^{\rm tot}(\sigma,2n,\alpha,\beta)\;&\iff\;(\forall m)\;\neg q(\iota(\sigma)\fr n,m,\alpha,\beta),\\
p^{\rm tot}(\sigma,2n+1,\alpha,\beta)\;&\iff\;p(\iota(\sigma),n,\alpha,\beta).
\end{align*}
For other cases, we also define the corresponding formulas on $\sigma$ by the same way.
if $\iota(\sigma)\not\in V^\ast$, then we define $p^{\rm tot}$ as follows:
\[p^{\rm tot}(\sigma,n,\alpha,\beta)\;\iff\;p(\iota(\sigma),n,\alpha,\beta)\]
Similarly, we also define $q^{\rm tot}$ and $\eta^{\rm tot}$ in the same way.
For $\sigma\in V^{{\rm tot},{\rm leaf}}$ with $\iota(\sigma)\not=\star$, we define $f^{\rm tot}(\sigma,\cdot)=f(\iota(\sigma),\cdot)$ as well.
Finally, if $\iota(\sigma)=\star$, then we define $f^{\rm tot}(\sigma,\cdot)$ as a nowhere defined function.
\end{enumerate}\medskip

\noindent It is not hard to check that $\Lambda^{\rm tot}$ is weakly total and $f_\Lambda=f_{\Lambda^{\rm tot}}$.
By strong-normality mentioned above, it is also clear that the underlying vein $\mathbb{V}^{\rm tot}$ of the weak-totalization $\Lambda^{\rm tot}$ is of the form $\mathbb{V}^{\oplus{\rm rk}_V(\langle\rangle)+1}$ if the root of $V$ is infinitely branching; otherwise $\mathbb{V}^{\rm tot}=\mathbb{V}$.
We now think of each $e\in\om$ as an index of weakly total flows $(\Lambda^{z,{\rm tot}}_{e,s})_{s\in\om}$.


\subsection*{Requirements}

To prove Theorems \ref{thm:main}, \ref{thm:main2} and \ref{thm:main3}, we will construct a $(z\oplus d)$-computable tree $T(z)\subseteq 2^{<\om}$ uniformly in $z$ which fulfills the following requirements:
\begin{align*}
\mathcal{G}:&\;(\exists g\in\mathbb{V}^{\prime\prime}\mathtt{C}^d)(\forall x\in\mathcal{T}(z))\;g(z,x)\in\mathcal{S}(z),\\
\mathcal{N}^z_{e,i,j}:&\;\mathcal{S}(z)\not=\emptyset\;\longrightarrow\;(\exists x\in\mathcal{T}(z))\;f_{\Lambda^{z\oplus d}_e}(x)\not\in\mathcal{U}^z_{i,j}.
\end{align*}
where recall that $\mathbb{V}^{\prime\prime}\mathtt{C}^d$ is the class of all $\mathbb{V}^{\prime\prime}$-piecewise $d$-computable functions.

The global requirement $\mathcal{G}$ clearly ensures the assertion (2) in Theorems \ref{thm:main} and \ref{thm:main3}.
The requirements $(\mathcal{N}^z_{e,i,j})$ ensure the assertions (3) in Theorems \ref{thm:main} and \ref{thm:main3} and (4) in Theorem \ref{thm:main2}.
To see this, it suffices to check that the requirements $(\mathcal{N}^z_{e,i,j})$ entail the property ($\star$) since the property ($\star$) implies 
these assertions as mentioned before.
Let $k$ be a $\mathbb{V}$-piecewise $r$-computable function for some $r\leq_Tz\oplus d$.
Then there is an index $e$ such that $k=\Lambda^{z\oplus d}_e$.
Moreover, if $y\equiv_Tz\oplus d$ then there are indices $i,j$ such that $\mathcal{U}(y)=\mathcal{U}^z_{i,j}$.
Therefore, by choosing a uniformization $t$ of $\mathcal{T}$ satisfying $t(z)=x$ for an $x$ in the above $\mathcal{N}^z_{e,i,j}$, we have $k\circ t(z)\not\in\mathcal{U}(y)$ as desired.

To simplify our argument, we assume that $z=d=\emptyset$.
The proof for general $z$ and $d$ is a straightforward relativization of our strategy for $z=d=\emptyset$.
Moreover, for instance, we use the symbols $S,T,U$ instead of $S(\emptyset),T(\emptyset),U(\emptyset)$, respectively, if there is no confusion.

First note that if $\mathcal{S}:=\mathcal{S}(\emptyset)$ is nonempty, there are infinitely many strings $\rho$ such that $\rho$ is a minimal string which is not contained in the tree $S$. 
Let $\rho_e$ be the $e$-th such string.
Clearly $\rho_d$ is incomparable with $\rho_e$ whenever $d\not=e$.
For any $e,i,j,s\in\om$, we will construct a computable monomorphism $\gamma^{e,i,j}_s:S\to 2^{<\om}$, that is, $\sigma\preceq\tau$ if and only if $\gamma^{e,i,j}_s(\sigma)\preceq\gamma^{e,i,j}_s(\tau)$.
The monomorphism $\gamma^{e,i,j}_s$ also satisfies that $\gamma^{e,i,j}_s(\alpha)\succeq\rho_{\langle e,i,j\rangle}$ for any $\alpha\in S$.
Then the stage $s$ approximation of our tree $T_s$ will be defined as follows:
\[
T_s=S\cup\{\tau\in 2^{<\om}:(\exists e,i,j)(\exists\alpha\in S)\;\tau\preceq \gamma^{e,i,j}_s(\alpha)\}.
\]
Moreover, we will ensure that $\gamma^{e,i,j}_{s+1}(\alpha)\in T_s$ for all $\alpha\in S$ and $s\in\om$.
Therefore $T$ will be defined as follows:
\[
T=\bigcap_{s\in\om}T_s=S\cup\{\tau\in 2^{<\om}:(\exists e,i,j)(\exists\alpha\in S)\;\tau\preceq \lim_{s\to\infty}\gamma^{e,i,j}_s(\alpha)\}.
\]
In particular, the $\emptyset$-th fiber $\mathcal{T}$ of our compact set will be of the following form:
\[\mathcal{T}=\mathcal{S}\cup\left\{\lim_\ell\lim_s\gamma^{e,i,j}_s(x\upr \ell):e\in\om\mbox{ and }x\in\mathcal{S}\right\}.\]
Note that our requirement $\mathcal{N}_e$ will be ensured in the following way: 
\[\mathcal{N}_{e,i,j}:\;(\forall x\in\mathcal{S})\;f_{\Lambda_e}(\lim_\ell\lim_s\gamma^{e,i,j}_s(x\upr \ell))\not\in\mathcal{U}_{i,j}.\]
This construction will automatically ensure that $\mathcal{S}\subseteq\mathcal{T}$.
Therefore, the identity map witnesses the assertion (1) in Theorems \ref{thm:main} and \ref{thm:main3}.

\subsection*{Priority Tree}

To simplify our notations, we first fix $e,i,j$, and remove $e,i,j$ from superscripts and subscripts, e.g., hereafter $\gamma^{e,i,j}_s$ and $\rho_{e,i,j}$ will be denoted by $\gamma_s$ and $\rho$ respectively.
Let $\Lambda=(V,{\rm rk},p,q,\eta,f)$ be the weak-totalization of the $e$-th partial computable flow on a fixed vein $\mathbb{V}=(\mathcal{V},{\rm rk}_\mathcal{V})$ (we also abbreviate all superscripts and subscripts in $\Lambda^{\rm tot}_e$ for notational convenience).
We call $V$ a {\em priority tree} (associated with the $\Lambda$-piecewise computation), and each $\xi\in V$ a {\em $\Lambda$-strategy} (or simply, a {\em strategy}).
At the beginning of stage $s$, we inductively assume that $T_s$ has already been constructed, where $T_0$ is the tree consisting of all strings comparable with $\rho=\rho_{e,i,j}$.
Let $\Lambda_s=(V_s,{\rm rk}_s,p,q,\eta_s,f_s)$ be the stage $s$ approximation of the weakly total flow $\Lambda$.

For each string $\sigma\in T_s$ of length $s$, we will inductively define the {\em current true path ${\rm tp}_\Lambda(\sigma)\in V_s$} and for each strategy $\xi\in V$, the {\em $\xi$-timer} $t_\Lambda(\xi,\sigma)\in\om$.
On the root $\pair{}$, reset the $\xi$-timer to be $t_\Lambda(\xi,\pair{})=0$ for each strategy $\xi\in V$.
Assume that the current true path ${\rm tp}_\Lambda(\sigma^-)\in V_s$ and the $\xi$-timer $t_\Lambda(\xi,\sigma^-)$  for every strategy $\xi\in V$ has been already defined.
Assume inductively that $\xi:={\rm tp}_\Lambda(\sigma)\upr n$ has already been produced.
\begin{enumerate}
\item If the computation ${\rm rk}(\xi)$ does not converge by stage $s$, then recall that $\xi$ has no successor in $V_s$.
Then we define ${\rm tp}_\Lambda(\sigma)=\xi$.
\item If $\xi\not\in V^{\rm leaf}$ and ${\rm rk}(\xi)=2$, the {\em outcome} of the strategy $\xi$ is the least $i$ such that $p(\xi,i,\sigma\upr t_\Lambda(\xi,\sigma^-))$, and define ${\rm tp}_\Lambda(\sigma)\upr n+1=\xi\fr i$.
\item If $\xi\not\in V^{\rm leaf}$ and ${\rm rk}(\xi)= 1$, the outcome of the strategy $\xi$ is the least $i$ such that $q(\xi,i,\sigma\upr j)$ for every $j<s$, and define ${\rm tp}_\Lambda(\sigma)\upr n+1=\xi\fr i$.
\item If $\xi\not\in V^{\rm leaf}$ and ${\rm rk}(\xi)=0$, there are two cases:
\begin{enumerate}
\item If there is $i\leq s$ such that the computation of $\eta(\xi,i)$ converges by stage $s$ and $\sigma\succeq\eta(\xi,i)$, then the outcome of $\xi$ is the least such $i$, and define ${\rm tp}_\Lambda(\sigma)\upr n+1=\xi\fr i$.
\item Otherwise, define ${\rm tp}_\Lambda(\sigma)=\xi$.
\end{enumerate}
\item If $\xi$ is a leaf of $V$, then define ${\rm tp}_\Lambda(\sigma)=\xi$.
\end{enumerate}

For (2) and (3), we note that such $i$ must exist by weak-totality of $\Lambda$.
If $\xi\preceq{\rm tp}_\Lambda(\sigma)$, then we say that {\em $\xi$ is eligible to act along $\sigma$}, and the strategy $\xi$ sets the $\xi$-timer ahead by one second, i.e., $t_\Lambda(\xi,\sigma)=t_\Lambda(\xi,\sigma^-)+1$.
Otherwise, put $t_\Lambda(\xi,\sigma)=t_\Lambda(\xi,\sigma^-)$.

\begin{lemma}\label{lem:leftmost}
${\rm TP}_\Lambda(x)=\liminf_{n\to\infty}{\rm tp}_\Lambda(x\upr n)$, that is, ${\rm TP}_\Lambda(x)$ is the leftmost leaf of $S$ that is eligible to act along $x\upr n$ for infinitely many $n$.
\qed
\end{lemma}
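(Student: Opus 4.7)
The plan is to prove the identity by simultaneous induction on depth $k$, establishing for each $k$ both (A) the prefix ${\rm TP}_\Lambda(x)\upr k$ is visited by ${\rm tp}_\Lambda(x\upr n)$ for infinitely many $n$, and (B) no node of length $k$ lying strictly to the left of ${\rm TP}_\Lambda(x)\upr k$ is visited infinitely often. Since $V$ is well-founded, this forces the liminf to be exactly the leaf ${\rm TP}_\Lambda(x)$. The base case $k=0$ is trivial: the root is visited at every stage.

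For the inductive step, set $\xi={\rm TP}_\Lambda(x)\upr k$ and let $i$ be such that $\xi\fr i={\rm TP}_\Lambda(x)\upr(k+1)$; by definition of the true path, $x\in P_{\xi\fr i}$ while $x\notin P_{\xi\fr j}$ for every $j<i$. From (A) at level $k$, the strategy $\xi$ is eligible along cofinally many $x\upr n$, so its timer $t_\Lambda(\xi,x\upr n)$ increments unboundedly and hits every natural number. I then analyze the outcome of $\xi$ by cases on ${\rm rk}(\xi)\in\{0,1,2\}$.

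The cases ${\rm rk}(\xi)=0$ and ${\rm rk}(\xi)=1$ are the easy ones: $P_{\xi\fr j}$ is then given by a clopen or $\Pi^0_1$ condition on finite prefixes of $x$, so the failure of $x\in P_{\xi\fr j}$ for $j<i$ is detected at some finite stage, while the truth of $x\in P_{\xi\fr i}$ is a condition on a single finite prefix or an intersection of such conditions. Consequently the computed outcome at $\xi$ stabilizes at $i$ after some stage, forcing $\xi\fr i$ to be visited cofinitely often among the $n$ where $\xi$ is eligible, and any $\xi\fr j$ with $j<i$ only finitely often. The delicate case is ${\rm rk}(\xi)=2$: here $P_{\xi\fr j}$ is $\Pi^0_2$, so the witness $p(\xi,j,x\upr t)$ may hold spuriously even when $x\notin P_{\xi\fr j}$. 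For this case I use that for each $j<i$ there is a threshold $s_j$ beyond which $\neg p(\xi,j,x\upr t)$ for all $t\geq s_j$, whereas for the true outcome $i$ the condition $p(\xi,i,x\upr t)$ holds at cofinally many $t$. Combined with the observation that the timer $t_\Lambda(\xi,x\upr n^-)$ ranges over all of $\om$, the computed outcome is $\geq i$ for all sufficiently large $n$ and equals $i$ at infinitely many such $n$.

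These facts give both (A) at level $k+1$ (namely that $\xi\fr i$ is visited infinitely often) and (B) at level $k+1$: any node of length $k+1$ strictly to the left of ${\rm TP}_\Lambda(x)\upr(k+1)$ is either an extension of some $\xi\fr j$ with $j<i$ (excluded by the outcome analysis above) or has its length-$k$ prefix strictly to the left of $\xi$ (excluded by (B) at level $k$). The main obstacle is the $\Pi^0_2$ case, where one must carefully control the interaction between the timer mechanism and the $\forall\exists$ structure of the defining formula; this is the only point where the specific design of the approximation, including the role of weak-totalization and the timer incrementation rule, is essential.
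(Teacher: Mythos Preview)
Your proof is correct. The paper itself gives no argument for this lemma---it is stated with a bare \qed and left to the reader---so there is no ``paper's own proof'' to compare against. Your inductive scheme on the depth $k$, splitting into the two assertions (A) and (B), is exactly the standard way to verify such a $\liminf$ identity for a priority-tree approximation, and your case analysis on ${\rm rk}(\xi)\in\{0,1,2\}$ correctly tracks how each Borel rank interacts with the approximation mechanism. In particular, your handling of the ${\rm rk}(\xi)=2$ case is the right one: because the node is finitely branching (the vein has Borel rank $(1,2)$), there are only finitely many thresholds $s_j$ for $j<i$ to clear, and since the $\xi$-timer increments by exactly one at each stage where $\xi$ is eligible (infinitely many by (A) at level $k$), it hits every natural number, so it eventually exceeds all the $s_j$ and subsequently lands on infinitely many witnesses $t$ with $p(\xi,i,x\upr t)$. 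Weak-totality is indeed what guarantees that the least-$i$ search in items (2) and (3) of the definition of ${\rm tp}_\Lambda$ always terminates, as the paper notes just after that definition.

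One small remark: the statement in the paper says ``leftmost leaf of $S$'', which is evidently a typo for ``leftmost leaf of $V$''; your proof implicitly reads it that way, which is correct.
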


Obviously, $\sigma\mapsto{\rm tp}_\Lambda(\sigma)$ is computable.
Therefore, this is an effective procedure approximating the piecewise computation induced by the flow $\Lambda$.

\begin{definition}[Priority-Value]
Given $\sigma\in T_{|\sigma|}$, the {\em priority value} ${\rm prior}_\Lambda(\xi,\sigma)$ of $\xi\in V$ along a string $\sigma$ is defined as follows:
\[{\rm prior}_\Lambda(\xi,\sigma)=\sum_{s=0}^{|\sigma|}\#\{\zeta<_{\rm left}\xi:\zeta\mbox{ is eligible to act along }\sigma\upr s\}.\]
\end{definition}\medskip

\noindent In other words, the current true path ${\rm tp}_\Lambda(\sigma)$ forces all strategies strictly to the right of ${\rm tp}_\Lambda(\sigma)$ to increase their priority values.
Clearly, if a strategy is an initial segment of the true path, then its priority value converges to some finite number.
It is also not hard to see the following:

\begin{lemma}\label{lem:smooth}
Consider the following partial functions $\tilde{s},\hat{s}$:
Let $n\in\om$, $x\in\om^\om$, and a strategy $\xi\in V$ be given (as inputs).
For any $u\in\om$ and any strategy $\zeta$ which is eligible to act along $x\upr u$,
\begin{enumerate}
\item if $u\geq\tilde{s}(\xi,x,n)$, and if $\xi<_{\rm left}\zeta$, then the priority value of $\zeta$ must be greater than $n$,
\item if $u\geq \hat{s}(x,n)$, either $\zeta$ is an initial segment of the true path or else the priority value of $\zeta$ must be greater than $n$.
\end{enumerate}

\noindent Then, $\hat{s}:2^\om\times\om\to\om$ is total, and $\tilde{s}(\xi,x,n)$ is defined for all $x\in 2^\om$ and $n\in\om$ whenever $\xi$ is an initial segment of the true path.
Moreover, $\tilde{s}$ is computable.
\end{lemma}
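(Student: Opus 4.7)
The plan is to realize $\tilde{s}$ and $\hat{s}$ explicitly from the computable approximation of the true path, then verify properties (1) and (2) using the $\liminf$ characterization of ${\rm TP}_\Lambda$ from Lemma \ref{lem:leftmost}.

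I would first set $\tilde{s}(\xi, x, n)$ to be the least stage $u$ such that $\xi$ has been eligible to act along $x \upr s$ for at least $n+1$ values $s \leq u$. Since $\sigma \mapsto {\rm tp}_\Lambda(\sigma)$ is computable, checking whether $\xi \preceq {\rm tp}_\Lambda(x \upr s)$ is also computable, so $\tilde{s}$ is computable with oracle $x$. When $\xi$ is an initial segment of the true path, Lemma \ref{lem:leftmost} guarantees that $\xi$ is eligible infinitely often, so $\tilde{s}(\xi, x, n)$ converges. Property (1) is then immediate: if $u \geq \tilde{s}(\xi, x, n)$ and $\zeta$ is eligible at $x \upr u$ with $\xi <_{\rm left} \zeta$, then at each of the $n+1$ stages where $\xi$ was eligible, $\xi$ itself contributes $1$ to the count in the summand defining ${\rm prior}_\Lambda(\zeta, x \upr u)$, so ${\rm prior}_\Lambda(\zeta, x \upr u) \geq n+1 > n$.

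For $\hat{s}$, write $\zeta_0 = {\rm TP}_\Lambda(x)$. Using the $\liminf$ characterization, for each level $\ell < |\zeta_0|$ there is a finite stage $u_\ell$ past which ${\rm tp}_\Lambda(x \upr u)(\ell) \geq \zeta_0(\ell)$ whenever defined; otherwise a leaf strictly to the left of $\zeta_0$ would be eligible infinitely often, contradicting the minimality of $\zeta_0$. Set $u^* = \max_{\ell < |\zeta_0|} u_\ell$ and define
\[
\hat{s}(x, n) := \max\bigl(u^*,\; \max_{1 \leq k \leq |\zeta_0|} \tilde{s}(\zeta_0 \upr k, x, n)\bigr).
\]
For any $u \geq \hat{s}(x, n)$ and any eligible $\zeta$ not on the true path, $\zeta$ must be incomparable with $\zeta_0$ (as $\zeta_0$ is a leaf of $V$), diverging at some level $\ell < |\zeta_0|$. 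By the choice of $u^*$, the divergence must satisfy $\zeta(\ell) > \zeta_0(\ell)$. Taking $\xi := \zeta_0 \upr (\ell+1)$, which lies on the true path and is $<_{\rm left} \zeta$, the inequality $u \geq \tilde{s}(\xi, x, n)$ combined with property (1) yields ${\rm prior}_\Lambda(\zeta, x \upr u) > n$. Totality of $\hat{s}$ follows: $u^*$ is a finite max of finite values, and each $\tilde{s}(\zeta_0 \upr k, x, n)$ is defined since $\zeta_0 \upr k$ lies on the true path.

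The main obstacle is ruling out strategies strictly to the left of the true path, whose priority is not forced to grow by eligibility of prefixes of $\zeta_0$ (those lie to the right of such $\zeta$, not to the left). The resolution is precisely the $\liminf$ argument above, which confines such $\zeta$ to stages $\leq u^*$, after which they never again become eligible. Since $\hat{s}$ is only required to be total (not computable), the use of the non-effective quantities $u^*$ and $|\zeta_0|$ causes no difficulty.
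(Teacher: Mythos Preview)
Your proof is correct and follows essentially the same approach as the paper: define $\tilde{s}$ by counting stages at which $\xi$ is eligible, and define $\hat{s}$ as the maximum of a stage $u^*$ (the paper's $s_0$) after which nothing strictly left of the true path is eligible and values of $\tilde{s}$ applied to initial segments of the true path. Your version is in fact slightly more careful than the paper's in two places: you use $n+1$ eligibilities (rather than the paper's ``$n$-th stage''), which is what is actually needed to force priority strictly greater than $n$; and you take the maximum of $\tilde{s}(\zeta_0\upr k,x,n)$ over all $k\le|\zeta_0|$, whereas the paper picks a single unspecified initial segment $\xi$ --- this only works if $\xi$ is long enough (e.g., the full leaf $\zeta_0$), since a short prefix of $\zeta_0$ may be $\preceq$-comparable with a strategy $\zeta$ to the right of $\zeta_0$ and hence not $<_{\rm left}\zeta$.
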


\begin{proof}
There are infinitely many stages $(t_i)_{i\in\om}$ such that the true path is eligible to act along $x\upr t_i$.
Let $t_n>s_0$ be the $n$-th such stage.
Then, after stage $t_n$, the priority value of any strategy strictly to the right of the true path becomes greater than $n$ by definition.
We now define the function $\tilde{s}$ as follows: given $\xi,x,n$, we wait for seeing the $n$-th stage $u_n$ at which the strategy $\xi$ is eligible to act along $x$, and define $\tilde{s}(\xi,x,n)=u_n$ (this value may not be defined if $\xi$ is not an initial segment of the true path).
This procedure is computable.
If $\xi$ is an initial segment of the true path, then $\tilde{s}(\xi,x,n)=t_n$ which satisfies the desired condition.
To define $\hat{s}$, by using Lemma \ref{lem:leftmost}, we choose a stage $s_0$ such that no strategy strictly to the left of the true path is eligible to act along $x\upr s_0$.
We also choose $\xi$, an initial segment of the true path.
Then, if $t\geq\max\{s_0,\tilde{s}(\xi,x,n)\}$, $\zeta$ is eligible to act along $x\upr t$, and the priority value of $\zeta$ is not greater than $n$, then $\zeta$ must be an initial segment of the true path.
Consequently, $\hat{s}(x,n)=\max\{s_0,\tilde{s}(\xi,x,n)\}$ satisfies the desired condition.
\end{proof}


\subsection*{Construction}

We first put $\gamma_0(\alpha)=\rho\fr \alpha$ for every $\alpha\in S$.
Assume that $\gamma_s(\alpha)$ (and hence $T_s$) has already been constructed.

We say that $\alpha\in S$ is {\em active at stage $s$} if the length of $\gamma_s(\alpha)$ is at most $|\rho|+s$.
In other words, $\gamma_s(\alpha)$ is an initial segment of a string of $T^\ast_s$, where $T^\ast_s$ is the set of all strings in $T_s$ of length $|\rho|+s$.
Given $\sigma\in T_s^\ast$, let $\gamma_s^\leftarrow(\sigma)$ denote the maximal string $\alpha\in S$ such that $\gamma_s(\alpha)\preceq\sigma$.
Then the set of all active strings of $S$ (with respect to a fixed triple $e,i,j$) at stage $s$ can be written as follows:
\[S_s=\{\alpha\in S:(\exists\sigma\in T^\ast_s)\;\alpha\preceq\gamma^{\leftarrow}_s(\sigma)\}.\]

For a leaf $\xi\in V^{\rm leaf}$, if the immediate predecessor $\xi^-$ is finitely branching, then we define ${\xi}^\ast=\xi^-$; otherwise, we define ${\xi}^\ast=\xi$.
A strategy is said to be {\em almost-terminal} if it is of the form $\xi^\ast$ for some leaf $\xi\in V^{\rm leaf}$.
We write $V^{\rm leaf}_\xi$ as the set of leaves in $V$ extending $\xi$.
Note that $V^{\rm leaf}_\xi$ is finite for any almost-terminal strategy $\xi$.

We now see that each $\sigma\in T_s^\ast$ is layered as
\[\gamma_s(\gamma_s^{\leftarrow}(\sigma)\upr 0)\prec \gamma_s(\gamma_s^{\leftarrow}(\sigma)\upr 1)\prec\dots\prec \gamma_s(\gamma_s^{\leftarrow}(\sigma))\preceq\sigma\]

Given $\sigma$, an almost-terminal strategy $\xi$ calculates the priority value $p={\rm prior}(\xi,\sigma)$, and then monitors the $p$-th level of the above layer.
The almost-terminal strategy $\xi$ is allowed to extend the $p$-th level string as $\gamma_{s+1}(\gamma_s^{\leftarrow}(\sigma)\upr p)\succ \gamma_{s}(\gamma_s^{\leftarrow}(\sigma)\upr p)$.
Such an action may injure all lower priority strategies.
Formally speaking, we say that an almost-terminal strategy $\xi\in V$ is {\em active along $\sigma$} if $\xi$ is eligible to act along $\sigma$ and moreover, its priority value ${\rm prior}(\xi,\sigma)$ is less than or equal to the length of $\gamma_s^{\leftarrow}(\sigma)$.
Then we also say that $\xi$ {\em monitors $\alpha$ along $\sigma$} if $\xi$ is active along $\sigma$, and if
\[\alpha=\gamma_s^{\leftarrow}(\sigma)\upr {\rm prior}(\xi,\sigma).\]


\subsubsection*{Strategy}
Stage $s$ has substages $t\leq u$ at which an almost-terminal strategy $\xi\in V$ of the priority value $t$ along some string may act, where $u$ is the length of a longest string in $S_s$.
We describe the action of our strategy at substage $t$.
\begin{enumerate}
\item We say that an almost-terminal strategy $\xi$ {\em requires attention at substage $t$} if
\begin{align*}
(\exists\sigma_\xi\in T_s^\ast)(\exists\alpha_\xi\in &S_s\cap 2^t)\;[\xi\mbox{ monitors $\alpha_\xi$ along $\sigma_\xi$},\\
&\mbox{ and }(\exists\lambda\in V^{\rm leaf}_\xi)\;f_\lambda(\gamma_s(\alpha_\xi))\prec f_\lambda(\sigma_\xi)\in U[s]],
\end{align*}
where recall that $U[s]$ is the stage $s$ approximation of $U^z_{i,j}$ for $z=\emptyset$ and fixed $i$ and $j$.
Recall also that $U:=\bigcup_sU[s]$ has no computable element.
\item If such a $\xi\in V$ exists, choose a strategy $\xi$ having the shortest $\gamma_s(\alpha_\xi)$ among strategies requiring attention at substage $t$.
Then we say that {\em $\xi$ receives attention along $\sigma_\xi$ at substage $t$}, and the strategy $\xi$ acts as follows:
\begin{enumerate}
\item Define $\gamma_{s+1}(\alpha_\xi)$ to be such a string $\sigma_\xi\in T_s$.
\item Then, injure all lower priority constructions by defining $\gamma_{s+1}(\alpha_\xi\fr\beta)=\gamma_{s+1}(\alpha_\xi)\fr\beta$ for every $\beta$ such that $\alpha_\xi\fr\beta\in S$.
For a string $\beta\in S$ which does not extend $\alpha_\xi$, we define $\gamma_{s+1}(\beta)=\gamma_{s}(\beta)$.
We have $\gamma_{s+1}(\beta)\succeq\gamma_s(\beta)$ unless $\beta\succ\alpha_\xi$.
\item Skip all substrategies after $t+1$, and go to substage $0$ of stage $s+1$.
\end{enumerate}
\item If there is no such $\xi$ and if $t<u$, go to substage $t+1$.
If $t=u$, then define $\gamma_{s+1}(\alpha)=\gamma_s(\alpha)$ for all $\alpha\in S$ and go to substage $0$ of stage $s+1$.
\end{enumerate}\medskip

\noindent By our construction, it is clear that $\sigma\in T_s^\ast$ implies $\sigma\fr\tau\in T_s$ for all $\tau\in 2^{<\omega}$.
Therefore, this construction ensures that $\gamma_{s+1}(\alpha)\in T_s$ for all $\alpha\in S$.

\begin{lemma}\label{lem:prio-converge}
Every strategy requires attention at most finitely often.
Therefore, $\lim_s\gamma(\alpha,s)$ converges for every $\alpha\in S$.
\end{lemma}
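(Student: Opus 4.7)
The plan is to argue by induction on the leftmost-first ordering on the almost-terminal strategies of $V$, proving that each such $\xi$ requires attention only finitely often; the convergence of $\lim_s \gamma_s(\alpha)$ then follows at once, since $\gamma_s(\alpha)$ is altered only when some almost-terminal $\xi$ acts with $\alpha_\xi \preceq \alpha$, and only finitely many such $\alpha_\xi$ (prefixes of $\alpha$ lying in $S$) are even candidates. Fix an almost-terminal $\xi$ and assume the claim for every $\zeta <_{\rm left} \xi$; let $s_0$ be a stage past which no such $\zeta$ acts, so that after $s_0$ no injury to $\gamma_s(\alpha)$ coming from a higher-priority action can occur for any $\alpha$ extending any $\alpha_\xi$ monitored by $\xi$.

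The heart of the argument is the per-$\alpha$ claim: for each fixed $\alpha \in S$, $\xi$ receives attention with $\alpha_\xi = \alpha$ only finitely often after $s_0$. Suppose instead this happens at stages $s_0 < s_1 < s_2 < \dots$. Each such action provides a leaf $\lambda \in V^{\rm leaf}_\xi$ with $f_\lambda(\gamma_{s_k}(\alpha)) \prec f_\lambda(\sigma_{\xi,s_k}) \in U[s_k]$, and sets $\gamma_{s_k+1}(\alpha) = \sigma_{\xi,s_k}$. Since $V^{\rm leaf}_\xi$ is finite---which is exactly the reason one restricts to \emph{almost-terminal} $\xi$---pigeonhole selects a single $\lambda^\ast$ witnessing infinitely many of these actions, at stages $(s_{k_i})_i$. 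Past $s_0$ no higher-priority strategy injures $\alpha$, so the strings $\gamma_{s_{k_i}}(\alpha)$ form a strictly nested computable chain; setting $y = \bigcup_i \gamma_{s_{k_i}}(\alpha)$, the function $f_{\lambda^\ast}$ produces a computable infinite path $f_{\lambda^\ast}(y) \in [U^\emptyset_{i,j}]$. This contradicts the construction of $U^\emptyset_{i,j}$, which (by the choice of the pointed perfect set $\mathcal{E}$ and Lemma~\ref{lem:perfect-avoid}) is either finite or equal to $\mathcal{U}(z)$ for a computable $z \in \mathcal{E}$, and hence contains no computable infinite path.

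The second step is to bound the set of $\alpha$'s that $\xi$ can ever monitor. Because $|\alpha_\xi| = {\rm prior}_\Lambda(\xi, \sigma_\xi)$ whenever $\xi$ monitors $\alpha_\xi$, it suffices to bound the priority value of $\xi$ across all $\sigma$'s on which $\xi$ ever acts. Lemma~\ref{lem:smooth} is tailor-made for this: past a suitable stage, any $\zeta <_{\rm left} \xi$ eligible to act either lies on the genuine true path (impossible for such $\zeta$, by the leftmost-first inductive hypothesis, once leftward activity has settled) or has priority value exceeding any prescribed $N$, so the priority value of $\xi$ itself cannot rise above a finite bound $N_\xi$. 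Since $S \cap 2^{\leq N_\xi}$ is finite, combining this with the per-$\alpha$ claim yields the desired global finiteness. The main technical obstacle is precisely this bookkeeping: priority values depend on the evolving partial approximations $\Lambda_s$ and current true paths ${\rm tp}_\Lambda(\sigma)$, so strategies must be split into those on the eventual true path, strictly to its left, and strictly to its right, and Lemma~\ref{lem:smooth} must be invoked to control each case uniformly.
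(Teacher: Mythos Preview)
Your induction is organized along the wrong axis, and this creates gaps in both steps. You induct on almost-terminal $\xi$ in the $\leq_{\rm left}$ order and assume that every $\zeta<_{\rm left}\xi$ has stopped acting by some stage $s_0$. But injuries to $\gamma_s(\alpha)$ are not caused by strategies to the left of $\xi$; they are caused by \emph{any} strategy acting at a substage $t<|\alpha|$ while monitoring some $\beta\prec\alpha$ along some $\sigma'$. That strategy can perfectly well be $\xi$ itself (monitoring a shorter $\beta$ along a different string) or some $\zeta>_{\rm left}\xi$ whose priority value along $\sigma'$ happens to be small; priority values are string-dependent, so ``to the right of $\xi$'' does not mean ``larger substage''. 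Hence your nested-chain claim for $\gamma_{s_{k_i}}(\alpha)$ is not justified by the hypothesis that leftward strategies are done. A second problem: even granting the per-$\alpha$ claim, your bound on the set of $\alpha$'s that $\xi$ ever monitors fails. Lemma~\ref{lem:smooth} controls priority values along a \emph{fixed} $x$, but ${\rm prior}(\xi,\sigma)$ varies with $\sigma$ and is unbounded across the strings on which $\xi$ is eligible (e.g.\ along any $x$ where $\xi$ lies to the right of ${\rm TP}_\Lambda(x)$). So $\xi$ can monitor $\alpha$'s of arbitrarily large length, and there is no finite $N_\xi$. (There is also a limit-ordinal issue: since $V$ may be infinitely branching, there can be infinitely many $\zeta<_{\rm left}\xi$, and ``each acts finitely often'' does not yield a single $s_0$.)

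The paper's argument avoids all of this by inducting on $\alpha\in S$ rather than on $\xi$. Once $\gamma_s(\beta)$ has stabilized for the finitely many $\beta\prec\alpha$, item (2-b) of the construction makes $\gamma_s(\alpha)$ monotone; if it fails to converge its limit is a single computable $x$, and \emph{then} one identifies the unique almost-terminal $\xi\preceq{\rm TP}_\Lambda(x)$ responsible for all further changes (via $\hat{s}(x,|\alpha|)$ from Lemma~\ref{lem:smooth}, applied to this specific $x$). The pigeonhole on $V^{\rm leaf}_\xi$ and the ``computable path in $\mathcal{U}$'' contradiction are then exactly as you wrote. The point is that the relevant $\xi$ is discovered \emph{a posteriori} from the limit $x$, not fixed in advance by an outer induction.
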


\begin{proof}
For $\alpha\in S$, inductively assume that we have already shown that $\lim_s\gamma_s(\beta)$ converges for any initial segment $\beta\prec\alpha$.
Let $s_0$ be the least stage such that $\gamma_s(\beta)=\gamma_{s_0}(\beta)$ for all $s\in\om$ and $\beta\prec\alpha$.
The item (2-b) in our construction ensures that $\gamma_s(\alpha)$ is monotone after stage $s_0$.
Suppose for the sake of contradiction that $x=\lim_s\gamma_s(\alpha)$ does not converge, that is, $x$ is an infinite string.
By monotonicity of $\gamma_s(\alpha)$ after stage $s_0$ and effectivity of our construction, it is not hard to see that $x$ is computable.

We first note that if $\gamma_{s+1}(\alpha)\succ\gamma_s(\alpha)$ happens for some $s\geq s_0$, then this change is caused by a strategy $\xi$ monitoring $\alpha$ along some string $\sigma_\xi$ at stage $s$.
Moreover, this $\sigma_\xi$ must be an initial segment of $x$.
Otherwise, the change $\gamma_{s+1}(\alpha)\succ\gamma_s(\alpha)$ is caused by an action of a strategy along $\sigma_\xi\not\prec x$, and then the strategy requires $\gamma_{s+1}(\alpha)$ to become $\sigma_\xi\not\prec x$, which contradicts monotonicity of $\gamma_s(\alpha)$ after stage $s_0$.

Let $\xi:={\rm TP}_\Lambda(x)$ be the true path through $\Lambda$ along $x$.
By our construction, only strategies $\xi$ with ${\rm prior}(\xi,x\upr s)\leq |\alpha|$ can change the value $\gamma_s(\alpha)$.
Let $\hat{s}(x,|\alpha|)$ be a stage in Lemma \ref{lem:smooth}, and let $s_1$ be the maximum of $s_0$ and $\hat{s}(x,|\alpha|)$.
Then, if $\gamma_s(\alpha)$ changes after stage $s_1$, this change must be caused by an initial segment $\xi$ of the true path ${\rm TP}_\Lambda(x)$.
Clearly, there is a unique almost-terminal strategy $\xi$ which is an initial segment of the true path ${\rm TP}_\Lambda(x)$.

Let $s(n)\geq s_1$ be the $n$-th stage such that $\gamma_{s(n)+1}(\alpha)\succ\gamma_{s(n)}(\alpha)$ happens.
In this case, the unique almost-terminal strategy $\xi\prec{\rm TP}_\Lambda(x)$ requires attention at substage $|\alpha|$ of stage $s(n)$ with witnesses $\sigma_\xi\prec x$ and $\alpha$, and therefore, there is a leaf $\lambda(n)\in V$ extending $\xi$ such that $f_{\lambda(n)}(\gamma_{s(n)}(\alpha))\prec f_{\lambda(n)}(\sigma_\xi)\in U$.
Moreover, since the change of $\gamma_s(\alpha)$ is caused by this strategy $\xi$, it must receive attention along $\sigma_\xi$, and therefore $\gamma_{s(n)+1}(\sigma_\xi)=\sigma_\xi$.
Hence, we have $f_{\lambda(n)}(\gamma_{s(n)}(\alpha))\prec f_{\lambda(n)}(\gamma_{s(n)+1}(\alpha))\in U$ for every $n\in\om$.
By the definition of being almost-terminal, there are only finitely many leaves in $V$ extending $\xi$.
Therefore, by the pigeonhole principle, there is a leaf $\lambda\in V$ extending $\xi$ such that $f_{\lambda}(\gamma_{s(n)}(\alpha))\prec f_{\lambda}(\gamma_{s(n)+1}(\alpha))\in U$ for infinitely many $n\in\om$.
%
By monotonicity of $\gamma$ after stage $s_0$ and the above property, $f_\lambda(x)$ produces an infinite string and $f_\lambda(x)\in\mathcal{U}$.
Since $f_\lambda$ and $x$ are computable, $f_\lambda(x)$ is a computable element of $\mathcal{U}$.
However, this contradicts our assumption that $\mathcal{U}$ has no computable element.
\end{proof}

Hereafter we write $\gamma(x)=\lim_\ell\lim_s\gamma_s(x\upr \ell)$.
Recall that every $y\in\mathcal{T}$ extending $\rho$ is of the form $\gamma(x)$ for some $x\in\mathcal{S}$.
By Lemma \ref{lem:prio-converge}, given $y\in T$ and $n\in\om$, for any sufficiently large $\ell_n$ and $s_n$ such that $\gamma^{\leftarrow}_{s_n}(y\upr\ell_n)\upr n$ is uniquely determined.
Therefore, we define $\gamma^{\leftarrow}(y)$ as an infinite string satisfying $\gamma^{\leftarrow}(y)\upr n=\gamma^{\leftarrow}_{s_n}(y\upr \ell_n)\upr n$.
It is not hard to see that $\gamma^\leftarrow(\gamma(x))=x$ for every $x\in\mathcal{S}$.

\begin{lemma}
$f_\Lambda(\gamma(x))\not\in\mathcal{U}$ for every $x\in\mathcal{S}$.
\end{lemma}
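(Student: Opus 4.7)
The plan is to assume for contradiction that $f_\Lambda(\gamma(x))\in\mathcal{U}$ and leverage this to force the priority-tree construction to act on a node whose $\gamma$-image has already stabilized, contradicting Lemma~\ref{lem:prio-converge}. Set $\xi:={\rm TP}_\Lambda(\gamma(x))$, a leaf of $V$, and let $\xi^{\ast}$ be the unique almost-terminal strategy with $\xi\in V^{\rm leaf}_{\xi^{\ast}}$; the set $V^{\rm leaf}_{\xi^{\ast}}$ is finite by the definition of almost-terminal. Since $\xi^{\ast}\preceq\xi$ is an initial segment of the true path, Lemma~\ref{lem:leftmost} yields that $\xi^{\ast}$ is eligible to act along $\gamma(x)\upr s$ cofinally in $s$, whereas strategies strictly to the left of $\xi^{\ast}$ contribute only a bounded total to the priority sum. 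Hence ${\rm prior}_\Lambda(\xi^{\ast},\gamma(x)\upr s)$ stabilizes to some $p\in\om$. Using Lemma~\ref{lem:prio-converge} and the finiteness of $2^{\leq p}\cap S$, fix a stage $s_0$ such that for every $s\geq s_0$: (a) $\gamma_s(\alpha)$ has reached $\lim_{s'}\gamma_{s'}(\alpha)$ for every $\alpha\in 2^{\leq p}\cap S$; (b) ${\rm prior}_\Lambda(\xi^{\ast},\gamma(x)\upr s)=p$; and (c) no strategy strictly to the left of $\xi^{\ast}$ is eligible to act along $\gamma(x)\upr s$. Writing $w=\lim_s\gamma_s(x\upr p)$, which is a finite prefix of $\gamma(x)$, condition (a) also forces $\gamma_s^{\leftarrow}(\gamma(x)\upr s)\upr p=x\upr p$ for every $s\geq s_0$.

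Next, exploit the hypothesis. Because $f_\xi$ is continuous and $f_\xi(\gamma(x))\in\mathcal{U}$ is an infinite path, there exists $s^{\ast}\geq s_0$ at which $\xi^{\ast}$ is eligible along $\gamma(x)\upr s^{\ast}$ and the finite output $f_\xi(\gamma(x)\upr s^{\ast})$ properly extends the finite string $f_\xi(w)$; and since every finite prefix of $f_\xi(\gamma(x))$ eventually lies in $U[s]$, we may additionally require $f_\xi(\gamma(x)\upr s^{\ast})\in U[s^{\ast}]$. Therefore $\xi^{\ast}$ requires attention at substage $p$ of stage $s^{\ast}$, witnessed by a suitable prefix $\sigma_\xi\in T^{\ast}_{s^{\ast}}$ of $\gamma(x)$, together with $\alpha_\xi=x\upr p$ and $\lambda=\xi\in V^{\rm leaf}_{\xi^{\ast}}$.

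The construction of stage $s^{\ast}$ processes substages in increasing order, and as at least $\xi^{\ast}$ is a candidate at substage $p$, some almost-terminal strategy $\xi'$ receives attention at some substage $t\leq p$. The monitoring condition forces $\gamma_{s^{\ast}}(\alpha_{\xi'})\prec\sigma_{\xi'}$ strictly, and the action rule then sets $\gamma_{s^{\ast}+1}(\alpha_{\xi'})=\sigma_{\xi'}\succ\gamma_{s^{\ast}}(\alpha_{\xi'})$, where $\alpha_{\xi'}\in 2^t\cap S\subseteq 2^{\leq p}\cap S$. This contradicts condition (a), completing the proof. The main obstacle is assembling (a)--(c) simultaneously and producing the stage $s^{\ast}$ from the continuity of $f_\xi$; once that is in hand, the ``shortest $\gamma_s(\alpha_\xi)$'' tie-breaking rule is irrelevant, since the contradiction arises from \emph{any} action at any substage $\leq p$ after stabilization.
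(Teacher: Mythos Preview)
Your proof is correct and follows essentially the same route as the paper's: assume $f_\Lambda(\gamma(x))\in\mathcal{U}$, let the priority value of the almost-terminal strategy on the true path stabilize to some $p$, invoke Lemma~\ref{lem:prio-converge} to freeze all $\gamma_s(\alpha)$ with $|\alpha|\le p$, and then argue that the hypothesis forces some strategy to receive attention at a substage $\le p$ after the freeze, a contradiction. Your write-up is in fact slightly more careful than the paper's in explicitly separating the leaf $\xi={\rm TP}_\Lambda(\gamma(x))$ from the almost-terminal strategy $\xi^\ast\preceq\xi$ and in spelling out why \emph{some} action must occur at a substage $\le p$ (rather than necessarily at $p$ itself); the only cosmetic slip is a length mismatch between ``$\gamma(x)\upr s^\ast$'' and the element $\sigma_\xi\in T^\ast_{s^\ast}$, which has length $|\rho|+s^\ast$, but this is purely notational.
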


\begin{proof}
Otherwise, there is $x\in\mathcal{S}$ such that $f_{\Lambda_e}(\gamma(x))\in\mathcal{U}$.
Recall that $\Lambda$ is the weak-totalization of the $e$-th partial computable flow $\Lambda_e$; therefore $\Lambda$ is equivalent to $\Lambda_e$.
In particular, $f_\Lambda(\gamma(x))=f_{\Lambda_e}(\gamma(x))$ holds.
We denote by $\xi$ the true path ${\rm TP}_\Lambda(\gamma(x))$ along $\gamma(x)$, and then we have $f_\Lambda(\gamma(x))=f_\xi(\gamma(x))$.
Since $\xi$ is the true path along $\gamma(x)$, there is stage $s_0$ such that the priority value of $\xi$ along $\gamma(x)$ converges to some $p\in\om$, that is, ${\rm prior}(\xi,\gamma(x)\upr s)=p$ for all $s\geq s_0$.
By Lemma \ref{lem:prio-converge} there is stage $s_1\geq s_0$ such that $\gamma_u(\alpha)=\gamma_{s_1}(\alpha)$ for any $\alpha\in S$ of length at most $p$ and any stage $u\geq s_1$.
In particular, no strategy which monitors $\alpha\in S$ of length at most $p$ along some string receives attention at substage $t\leq p$ of stage after $s_1$.
For $\alpha=\gamma^{\leftarrow}(\gamma(x))\upr p$, note that $\gamma_u(\alpha)$ for any $u\geq s_1$ is of the form $\gamma(x) \upr \ell$ for some $\ell\in\om$.
However, if $f_\xi(\gamma(x))\in\mathcal{U}$ then for any $\ell\in\om$ there is $u\geq s_1$ such that $f_\xi(\gamma(x)\upr \ell)\prec f_\xi(\gamma(x)\upr v)\in U[v]$ for all $v\geq u$.
Moreover, $\xi$ is eligible to act at some stage $v\geq u$ since $\xi$ is the true path along $\Lambda$ at $\gamma(x)$.
Therefore, some strategy must receive attention at substage $\leq p$ of such stage $v\geq u\geq s_1$, which is a contradiction because of our choice of $s_1$.
\end{proof}

Finally, we direct our attention to the global requirement $\mathcal{G}$, and therefore, we have to analyze the whole picture of the $\emptyset$-th fiber of $\mathcal{T}$.
To see the property of $\mathcal{T}(\emptyset)$ we now need to restore the subscripts and superscripts such as $e,i,j$.
For instance, we consider $\gamma_{e,i,j}(x)=\lim_\ell\lim_s\gamma^{e,i,j}_s(x\upr \ell)$ and $\gamma^{\leftarrow}_{e,i,j}$ defined as above.
Recall that the $\emptyset$-th fiber of $\mathcal{T}$ is defined as $\mathcal{S}(\emptyset)\cup\{\gamma_{e,i,j}(x):e,i,j\in\om\mbox{ and }x\in\mathcal{S}(\emptyset)\}$.
To make sure that the global requirement $\mathcal{G}$ is satisfied, we will construct a $\mathbb{V}'$-piecewise computable function $g_\emptyset:\mathcal{T}(\emptyset)\to\mathcal{S}(\emptyset)$.
We consider the following function:
\[
g_\emptyset(y)=
\begin{cases}
y & \mbox{if }y\in\mathcal{S}(\emptyset),\\
x & \mbox{if }y\mbox{ is of the form $\gamma_{e,i,j}(x)$}.
\end{cases}
\]

\begin{lemma}\label{lem:global-requirement}
$g_\emptyset$ is $\mathbb{V}'$-piecewise computable.
\end{lemma}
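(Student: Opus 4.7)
The plan is to exhibit an explicit flow on the vein $\mathbb{V}'$ whose induced function equals $g_\emptyset$. Since every $\rho_{e,i,j}$ was chosen outside the tree $S(\emptyset)$, the domain $\mathcal{T}(\emptyset)$ admits, in its relative topology, the clopen disjoint decomposition
\[
\mathcal{T}(\emptyset)=\mathcal{S}(\emptyset)\sqcup\bigsqcup_{e,i,j\in\om}E_{e,i,j},\qquad E_{e,i,j}:=[\rho_{e,i,j}]\cap\mathcal{T}(\emptyset).
\]
On $\mathcal{S}(\emptyset)$ the function $g_\emptyset$ is the identity, and on each $E_{e,i,j}$ it coincides with the partial inverse $\gamma^{\leftarrow}_{e,i,j}$ of the monomorphism $\gamma_{e,i,j}$; it is this partial inverse that must actually be realised inside the $\mathbb{V}'$-structure.

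I would use the finitely branching root of $\mathbb{V}'$---of rank ${\rm rk}_\mathcal{V}(\langle\rangle)+1$ when ${\rm br}_\mathcal{V}(\langle\rangle)=\om$, and of rank $1$ in the complementary case---to separate $\mathcal{S}(\emptyset)$ from its complement via a $2$-branching cover, which fits both Borel budgets since the complement is clopen in $\mathcal{T}(\emptyset)$. Along the $\mathcal{S}(\emptyset)$-side I trivialise the remaining $\mathbb{V}^{\ominus 1}$-skeleton by letting the leftmost child cover the whole domain at every intermediate node, and attach the identity to the resulting leaf. Along the complementary side I use the $\om$-branching layer immediately below---the inserted $\oplus_\om 0$ in the finite-root case, or the root of $\mathbb{V}^{\ominus 1}$ itself in the infinite-root case, which has rank $\leq 1$ by the $(1,2)$-hypothesis---to dispatch on an enumeration of quadruples $(e,i,j,\xi)$, where $\xi$ ranges over the leaves of the priority tree of the weak totalization of $\Lambda^{e,i,j}$. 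The $(e,i,j,\xi)$-th cover set is $E_{e,i,j}\cap\bigcap_{\sigma\preceq\xi}P^{e,i,j}_\sigma$; by leftmost-true-path semantics, on input $y\in E_{e_0,i_0,j_0}$ the activated branch identifies $(e_0,i_0,j_0)$ together with the true path $\xi^\ast(y)$ of $\Lambda^{e_0,i_0,j_0}$ along $y$. For a deeper $\mathbb{V}$, this dispatch-and-true-path identification is spread across the successive levels of the $\mathbb{V}$-skeleton of $\mathbb{V}^{\ominus 1}$, with Proposition~\ref{prop:chara-label} ensuring the true-path map is $\mathbb{V}$-piecewise continuous.

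With $(e,i,j,\xi^\ast)$ fixed, what remains is to compute $\gamma^{\leftarrow}_{e,i,j}(y)$ using the subvein below. This is where the $\ominus 1$ modification is essential: below each former almost-terminal node it has inserted an $\om$-branching $\Pi^0_1$ layer, and I would use its $k$-th branch to express the $\Pi^0_1$ assertion that the almost-terminal strategy $\hat\xi^\ast$ extending $\xi^\ast$ does not receive attention along $y$ at any stage $\geq k$ with any witness $\alpha_\xi$ of length $\leq k$. Lemma~\ref{lem:prio-converge} guarantees some such $k$ exists, so leftmost match picks the minimal viable one. At the resulting leaf I place a continuous map which, given $y$ and $k$, recovers $x\upr n$ as the unique $\alpha\in S\cap 2^n$ with $\gamma^{e,i,j}_k(\alpha)\preceq y$ for every $n\leq k$, extending past $n=k$ using the monotonicity of $\gamma^{e,i,j}_s(\alpha)$ after stabilization.

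The step requiring the most care is this final readoff: a single threshold $k$ must yield all infinitely many bits of $\gamma^{\leftarrow}(y)$ continuously. I would address this by enumerating thresholds as pairs $(k,N)\in\om\times\om$ in a suitable priority order, so that the true path through the $\ominus 1$-layer selects, for every $N$, a pair $(k_N,N)$ with $k_N$ large enough to stabilize all $\alpha\in S\cap 2^{\leq N}$; a dovetailed readoff at each actual leaf-domain then produces a continuous function. The same recipe handles the finite-root case, where the $\oplus_\om 0\oplus 1$ structure at the top of $\mathbb{V}'$ takes over the roles of splitting off $\mathcal{S}(\emptyset)$ and dispatching on $(e,i,j)$.
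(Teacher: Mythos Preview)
Your overall architecture matches the paper's: use the top of $\mathbb{V}'$ to split off $\mathcal{S}(\emptyset)$ and dispatch on $(e,i,j)$, use the surviving $\mathbb{V}$-skeleton inside $\mathbb{V}^{\ominus 1}$ to follow the priority tree and locate the almost-terminal strategy $\xi$ on the true path, and then use the new $\Pi^0_1$-$\omega$-branching at the $\ominus 1$ layer to extract a stabilisation parameter so that a continuous function can read off $\gamma^{\leftarrow}_{e,i,j}(y)$. That is exactly the shape of the paper's proof.

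The gap is in the last step, where you have to specify \emph{which} $\Pi^0_1$ parameter to extract and \emph{why} that single parameter suffices for a continuous readoff of all infinitely many bits. Your branch condition only bounds when the almost-terminal strategy $\hat\xi^\ast$ itself stops receiving attention with short witnesses; it says nothing about strategies strictly to the left of $\hat\xi^\ast$ (which may still be eligible finitely often after your threshold $k$ and thereby change $\gamma_s(\alpha)$), nor about strategies to the right. Your attempted repair via pairs $(k,N)$ is not well-formed: the true path through the $\ominus 1$-layer selects \emph{one} leaf, not one per $N$, so you cannot recover ``for every $N$ a $k_N$'' from a single $\Pi^0_1$ branching.

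What the paper does instead is index the $\ominus 1$-branches by pairs $(s,n)$ encoding two separate pieces of information: $s$ is a stage after which no strategy to the \emph{left} of $\xi$ is ever eligible along $y$, and $n$ is the eventual value of the \emph{total length-of-agreement} $\sum_{\lambda\succeq\xi}\ell^{i,j}_\lambda(y)$ over the finitely many leaves above $\xi$. Both conditions are $\Pi^0_1$. Knowing $s$ pins down the priority value $p$ of $\xi$; knowing $n$ lets one compute a stage $s_0$ after which $\xi$ never again receives attention. For strategies to the \emph{right} of $\xi$ one does not need any extra $\Pi^0_1$ data at all: Lemma~\ref{lem:smooth} supplies a \emph{computable} function $\tilde{s}(\xi,y,\ell)$ which, because $\xi$ lies on the true path, returns for each $\ell$ a stage after which every right strategy eligible along $y$ has priority value $>\ell$. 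Hence $\gamma_s(\gamma^{\leftarrow}(y)\upharpoonright\ell)$ is frozen from stage $\max\{s_0,\tilde{s}(\xi,y,\ell)\}$ on, and the readoff of the $\ell$-th bit is continuous in $y$ uniformly in $\ell$. The reliance on the computability of $\tilde{s}$ is the point you are missing; without it, no single finite parameter at the $\ominus 1$ layer can do the job.
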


\begin{proof}
In this proof, to avoid confusion, we use the symbols $\lceil m_0,\dots,m_n\rceil$ to denote a natural number coding the tuple $\langle m_0,\dots,m_n\rangle$.
We need to construct a flow $\Lambda'$ on the vein $\mathbb{V}'$ such that $f_{\Lambda'}$ is equal to $g_\emptyset$.
Recall that $\mathbb{V}'$ is of the form $\mathbb{V}^{\ominus 1\oplus_\om 0\oplus 1}$ if the root of $\mathbb{V}$ is finitely branching; otherwise, it is of the form $\mathbb{V}^{\ominus 1\oplus k}$, where $k={\rm rk}(\langle\rangle)+1$.
Thus, all new nodes in $\mathbb{V}'$ are infinitely branching except for the root.
To define the flow, our branching function $b'$ first converts the root of $\mathbb{V}$ into a $2$-branching node, and put the $1$-replacement of the $(e,i,j)$-th flow below $\langle 1,\lceil e,i,j\rceil\rangle$.
That is, the branching function $b'$ is defined by $b'(\langle\rangle)=2$ and $b'(\langle k,\lceil e,i,j\rceil\rangle\fr\xi)=b_e(\xi)$ for all $k,e,i,j\in\om$ and $\xi\in \mathbf{V}_e$.
Here, if the root of $\mathbf{V}$ is infinitely branching, then hereafter we think of $\langle 1,\lceil e,i,j\rceil\rangle\fr\xi$ as an abbreviation of $\langle 1,\lceil e,i,j,n\rceil\rangle\fr\zeta$, where $\xi=n\fr\zeta$.

If the root of $\mathbf{V}$ is finitely branching, we define the labeled well-founded tree $\mathbf{V}'$ as the result after removing all extensions of $\langle 0\rangle$ from $(\mathbb{V}')^{b'}$.
More explicitly, we may define the labeled well-founded tree $\mathbf{V}'=(V',{\rm rk}')$ on the vein $\mathbb{V}'$ as follows:
\begin{align*}
V'=\{\langle\rangle,\langle 0\rangle,\langle 1\rangle\}&\cup\{\langle 1,\lceil e,i,j\rceil\rangle\fr\xi:e,i,j\in\om\mbox{ and }\xi\preceq\zeta\mbox{ for some }\zeta\in V_e^{\rm at}\}\\
&\cup\{\langle 1,\lceil e,i,j\rceil\rangle\fr\xi\fr n:\xi\in V_e^{\rm at}\mbox{ and }n\in\om\},
\end{align*}
where recall that $V_e^{\rm at}$ is the set of all almost-terminal strings in $V_e$, and ${\rm rk}'$ is defined in a straightforward manner.
If the root of $\mathbf{V}$ is infinitely branching, we convert $\langle 0\rangle$ into a two-branching Borel-rank-$1$ node such that the lefthand side $\langle 00\rangle$ is a leaf and the righthand side $\langle 01\rangle$ is an infinitely-branching Borel-rank-$0$ node all of whose successors are leaves.
Formally speaking, we define the labeled well-founded tree $\mathbf{V}'=(V',{\rm rk}')$ on the vein $\mathbb{V}'$ as follows:
\begin{align*}
V'=\{\langle\rangle,\langle &0\rangle,\langle 1\rangle,\langle 00\rangle,\langle 01\rangle\}\cup\{\langle 1,\lceil e,i,j\rceil\rangle\fr\xi:e,i,j\in\om\mbox{ and }\xi\preceq\zeta\mbox{ for some }\zeta\in V_e^{\rm at}\}\\
&\cup\{\langle 01e\rangle:e\in\om\}\cup\{\langle 1,\lceil e,i,j\rceil\rangle\fr\xi\fr n:\xi\in V_e^{\rm at}\mbox{ and }n\in\om\},
\end{align*}
and ${\rm rk}'(\langle\rangle)={\rm rk}(\langle\rangle)+1$, ${\rm rk}'(\langle 0\rangle)=1$, and ${\rm rk}'(\langle 01\rangle)=0$; the other values of ${\rm rk}'$ are defined in a  straightforward manner.

We place a flowchart $\Lambda'$ on the labeled well-founded tree $(V',{\rm rk}')$.
We first assume that the root of $\mathbb{V}$ is finitely branching, and therefore $\mathbb{V}'$ is of the form $\mathbb{V}^{\ominus 1\oplus_\om 0\oplus 1}$.
Recall that the root $\langle\rangle$ branches into two nodes in $V'$, and the Borel rank ${\rm rk}'(\langle\rangle)$ of this branching is $1$.
We put the $\Pi^0_1$-branch on the root $\langle\rangle$ which asks whether a given input $x\in 2^\om$ extends $\rho_{e,i,j}$ for some $e,i,j$ or not.
Formally speaking, the first $\Pi^0_1$-branching condition is given as follows:
\[
(\forall\sigma\in 2^{<\om})\quad q'(\langle\rangle,0,\sigma)\;\iff\;(\forall e,i,j\in\om)\;\rho_{e,i,j}\not\preceq\sigma,
\]
and $q'(\langle\rangle,1,\sigma)$ is a formula which is always true for any $\sigma\in 2^{<\om}$.

Next, recall that if we answer yes to this first $\Pi^0_1$-question on the root $\langle\rangle$, then the computation directs into the left node $\langle 0\rangle$, which is a terminal node in $V'$; therefore, some continuous function $f_{\langle 0\rangle}$ has to be placed on this terminal node $\langle 0\rangle$.
We describe the following instruction in our flowchart $\Lambda'$:
If we answer yes to the first $\Pi^0_1$-question with a given input $x$, that is, if $q'(\langle\rangle,0,x\upr n)$ for all $n\in\om$, then return $x$ itself.
In other words, we define $f_{\langle 0\rangle}$ to be the identity function.

Recall also that if we answer no to the first $\Pi^0_1$-question on the root $\langle\rangle$, then the computation directs into the right node $\langle 1\rangle$, which is an infinitely branching node in $V'$ with Borel rank $0$; therefore, a $\Delta^0_0$-conditional branch (given by a collection $(\eta'(\langle 1\rangle,\lceil e,i,j\rceil))_{e,i,j\in\om}$ of binary strings) has to be placed on the node $\langle 1\rangle$.
We describe the following instruction in our flowchart $\Lambda'$:
If we answer no to the first $\Pi^0_1$-question with a given input $x$, then consider the $\Delta^0_0$-question which asks what the least number $\lceil e,i,j\rceil$ such that $x$ extends the string $\rho_{e,i,j}$ is.
In other words, we define $\eta'(\langle 1\rangle,\lceil e,i,j\rceil)$ to be $\rho_{e,i,j}$.

If we answer $(e,i,j)$ to the $\Delta^0_0$-question on $\langle 1\rangle$, recall that the shape of the labeled well-founded tree $(V',{\rm rk}')$ below $\langle 1,\lceil e,i,j\rceil\rangle$ is exactly the same as the labeled well-founded tree $(V^{\rm tot}_e,{\rm rk}^{\rm tot}_e)$ in the weak-totalization of the $e$-th flowchart $\Lambda_e$ except that each almost-terminal node $\xi$ in $V_e$ becomes an infinitely branching node $\langle 1,\lceil e,i,j\rceil\rangle\fr \xi$ in $V'$ with Borel rank $1$.
Thus, if a node is of the form $\langle 1,\lceil e,i,j\rceil\rangle\fr\zeta$ for some $\zeta\in V_e$ which does not reach an almost-terminal node, then we put the same question on $\langle 1,\lceil e,i,j\rceil\rangle\fr\zeta$  in our flowchart $\Lambda'$ as that on $\zeta$ in the $e$-th flowchart $\Lambda_e$.

Now, for any almost-terminal node $\xi$ in $V_e$, we need to put a new $\Pi^0_1$-branching condition on $\langle 1,\lceil e,i,j\rceil\rangle\fr \xi$ and a new continuous function on each leaf $\langle 1,\lceil e,i,j\rceil\rangle\fr\xi\fr k$.
The {\em length-of-agreement} of a leaf $\lambda$ in the original tree $V_e$ with respect to a tree $U^\emptyset_{i,j}$ is defined as follows:
\[\ell_{\lambda}^{i,j}(\sigma)=\max\{n\in\omega:f_\lambda(\sigma;m)\downarrow
\mbox{ for every $m<n$, and }f_\lambda(\sigma)\upr n\in
U^\emptyset_{i,j}\}.\]\medskip

\noindent For an almost-terminal node $\xi$ in $V_e$, if a computation reaches the node $\langle 1,\lceil e,i,j\rceil\rangle\fr\xi$ in $V'$ along our flowchart, consider the following Borel-rank-$1$ question:
Give the pair $(s,n)$ satisfying the following:
\begin{enumerate}[label=(\Roman*)]
\item $s$ is the least stage after which no $\Lambda_e$-strategy $\zeta<_{\rm left}\xi$ acts,
\item and $n$ is the total number of lengths-of-agreement of leaves $\lambda$ in the original tree $V_e$ extending $\xi$.
\end{enumerate}
Here recall that an almost-terminal node has only finitely many successors, and therefore, the sum of lengths-of-agreement must be finite.
Formally speaking, if $\xi$ is an almost-terminal node in the original tree $V_e$, then we consider the following $\Pi^0_1$ formula on the node $\langle 1,\lceil e,i,j\rceil\rangle\fr \xi$ in $V'$:
\begin{align*}
q'(\langle{1,\lceil e,i,j\rceil}\rangle\fr \xi,\lceil s,n\rceil,\sigma)\;\iff\;&\;(\forall t)\;[(s<t<|\sigma|)\;\rightarrow\;(\neg\exists\zeta<_{\rm left}\xi)\;\zeta\preceq{\rm tp}_{\Lambda_e}(\sigma\upr t)]\\
&\mbox{and }\ell^{i,j}_\xi(\sigma):=\sum \left\{\ell_{\lambda}^{i,j}(\sigma):\rho\in V_e^{\rm leaf}\mbox{ and }\xi\preceq\lambda\right\}\leq n,
\end{align*}
It is not hard to check that this formula is $\Pi^0_1$.

Finally, we need to place a continuous function $f'_\theta$ on each leaf $\theta:=\langle 1,\lceil e,i,j\rceil\rangle\fr\xi\fr\lceil s,n\rceil$ in $V'$, where $\xi$ is an almost-terminal node in $V_e$.
The function $f'_\theta$ will act under the belief that $\xi$ is an initial segment of the true path through $\Lambda_e^{\rm tot}$, and $s$ and $n$ are the correct values satisfying the above conditions (I) and (II).
In particular, $f'_\theta(x)$ always believes that $\langle{1,\lceil e,i,j\rceil}\rangle\fr\xi\fr\lceil s,n\rceil$ is the true path through our new flowchart $\Lambda'$ along any given input $x$.
Given an input $x\in 2^\om$, the value $f'_\theta(x)$ is computed in the following manner:
\begin{enumerate}
\item First, calculate the priority value $p:={\rm prior}_{\Lambda^{\rm tot}_e}(\xi,\sigma)$ of $\xi$ along $x\upr s$.
\item Next, wait for seeing stage $s_0\geq s$ such that the sum of lengths-of-agreement has become $n$, and moreover $\xi$ has already received attention because of the change of the total value of lengths-of-agreement to $n$ by stage $s_0$.
Formally speaking, for any stage $u\in\om$ and any leaf $\lambda\in (V^{\rm tot}_e)^{\rm leaf}_\xi$, consider the value $v(\lambda,u)$ defined by the maximal length of $f_\lambda(\sigma_\xi)$ such that $\xi$ receives attention along $\sigma_\xi$ witnessed by $\rho$ at a substage of some stage $s'<u$.
Then, wait for stage $s_0\geq s$ such that we see the following equations:
\[\sum\{v(\lambda,s_0):{\lambda\in V^{\rm leaf}_\xi}\}=\ell^{i,j}_\xi(x\upr s_0)=n.\]
\item Moreover, for any $\ell\geq p$, wait for seeing stage $s(\ell)=\max\{\tilde{s}(\xi,x,\ell),s_0\}$ where $\tilde{s}$ is the partial computable function in  Lemma \ref{lem:smooth}.
\item For a given $\ell\geq p$, search for the unique $\alpha_\ell$ of length $\ell$ such that $\gamma_{s(\ell)}(\alpha_\ell)\preceq x$, and then return $f_\theta(x)(\ell-1)=\alpha_\ell(\ell-1)$.
Such $\alpha_\ell$ exists whenever $x\in\mathcal{T}$.
\end{enumerate}
Note that $f_\theta$ is partially  computable uniformly in $e,i,j,\xi,s,n$.

\begin{claim}
If $x$ is an infinite path through $T(\emptyset)$ and $\theta=\langle{1,\lceil e,i,j\rceil}\rangle\fr\xi\fr\lceil s,n\rceil$ is the true path through our new flowchart $\Lambda'$ along an input $x$, then $f_\theta(x)=\gamma^{\leftarrow}_{e,i,j}(x)$ holds.
\end{claim}

\noindent
It is not hard to see that if $\langle{1,\lceil e,i,j\rceil}\rangle\fr\xi\fr\lceil s,n\rceil$ is the true path then $(s,n)$ is the correct pair satisfying the above conditions (I) and (II).
Therefore, the priority value of $\xi$ never changes after stage $s$, that is, the value stabilizes to $p$, and the almost-terminal strategy $\xi$ never receives attention after stage $s_0$ where $s_0$ is the stage in (2).
As mentioned in the second paragraph in the proof of Lemma \ref{lem:prio-converge}, if an action of a strategy causes $\gamma^{e,i,j}_{s+1}(\alpha_\ell)\succ\gamma^{e,i,j}_s(\alpha_\ell)$ for some stage $s\geq s(\ell)$, then this strategy must act along an initial segment of $x$ unless $x\not\in\mathcal{T}(\emptyset)$ since $\gamma^{e,i,j}_{s(\ell)}(\alpha_\ell)$ is an initial segment of $x$.
However, no strategy can act along $x$ after stage $s(\ell)$.
Therefore, it has to be true that, for any stage $u\geq s(\ell)$, no strategy can change the value $\gamma^{e,i,j}_u(\alpha_\ell)$ along $x$.
In other words, we have $\lim_s\gamma^{e,i,j}_s(\alpha_\ell)=\gamma^{e,i,j}_{s(\ell)}(\alpha_\ell)\prec x$.
By uniqueness of such $\alpha_\ell$, we must have $\gamma^{\leftarrow}_{e,i,j}(x)\upr \ell=\alpha_\ell$.
Consequently, we get that $f_\theta(x)=\gamma^{\leftarrow}_{e,i,j}(x)$.

\medskip

We next consider the case that the root of $\mathbb{V}$ is infinitely branching.
In this case, we first need to put a ${\Pi}^0_{{\rm rk}(\langle\rangle)+1}$-question on the root.
If ${\rm rk}(\langle\rangle)=1$, then we put the following ${\Pi}^0_2$-question:
\[p'(\langle\rangle,0,\sigma)\;\iff\;\neg(\exists e,i,j,n\in\om)\;[\rho_{e,i,j}\preceq\sigma\mbox{ and }q_e(\langle\rangle,n,\sigma)],\]
and if ${\rm rk}(\langle\rangle)=0$, then we put the following ${\Pi}^0_1$-question:
\[q'(\langle\rangle,0,\sigma)\;\iff\;\neg(\exists e,i,j,n\in\om)\;[\rho_{e,i,j}\preceq\sigma\mbox{ and }\eta_e(\langle\rangle,n)\preceq\sigma]\]
In other words, these questions ask whether it is true that if a given input $x$ extends $\rho_{e,i,j}$ for some $e,i,j$ then the first outcome along the weak-totalization of $\Lambda_e$ is $0$, that is, $x\not\in{\rm dom}(\Lambda_e)$.
If we answer yes to this question, then the computation reaches the two-branching Borel-rank-$1$ node $\langle 0\rangle$.
We put the ${\Pi}^0_1$-question on $\langle 0\rangle$ asking whether a given input extends $\rho_{e,i,j}$ for some $e,i,j$ or not:
\[q'(\langle 0\rangle,0,\sigma)\;\iff\;(\forall e,i,j\in\om)\;\rho_{e,i,j}\not\preceq\sigma,\]
and then we put the identity function on the leaf $\langle 00\rangle$ as in the finitely-branching case.
Similarly, we put the rank-$0$ question on the infinitely-branching Borel-rank-$0$ node $\langle 01\rangle$ asking for the unique triple $(e,i,j)$ such that a given input extends $\rho_{e,i,j}$, that is, define $\eta(\langle 01\rangle,\lceil e,i,j\rceil)=\rho_{e,i,j}$ for each $e,i,j\in\om$.
Now we need to put a continuous function $f_{01\lceil e,i,j\rceil}$ on the leaf $\langle 01\lceil e,i,j\rceil\rangle$ for each $e,i,j\in\om$.
The function believes that a given input $x$ extends $\rho_{e,i,j}$ and the first outcome along the weak-totalization of $\Lambda_e$ is $0$.
The function $f_{01\lceil e,i,j\rceil}$ proceeds similarly to the function $f_\theta$ described above except that we do not need to care about the length-of-agreement and the priority value, because the function $f_{01\lceil e,i,j\rceil}$ believes that $x\not\in{\rm dom}(\Lambda_e)$ and $\langle 0\rangle$ is always the leftmost path of $\Lambda_e^{\rm tot}$.
By the same argument as above, we can show that if $x$ is an infinite path through $T(\emptyset)$ and $\langle{01\lceil e,i,j\rceil}\rangle$ is the true path through our new flowchart $\Lambda'$ along an input $x$, then $f_{01\lceil e,i,j\rceil}(x)=\gamma^{\leftarrow}_{e,i,j}(x)$ holds.
All other parts in the infinitely-branching case are similar as the finitely-branching case except that we consider $\langle 1,\lceil e,i,j,n\rceil\rangle\fr\zeta$ instead of $\langle 1,\lceil e,i,j\rceil\rangle\fr\xi$ where $\xi=n\fr\zeta$.

We are now ready for proving that $f_{\Lambda'}$ agrees with $g_\emptyset$ on the domain $\mathcal{T}(\emptyset)$.
Given an input $x\in\mathcal{T}(\emptyset)$, consider the case that the first outcome of the true path through $\Lambda'$ along $x$ is $0$.
There are two cases.
First, consider the case that the root of the original vein $\mathbb{V}$ is finitely branching.
Then, if the first outcome through $\Lambda'$ is $0$ then we answer yes to the first $\Pi^0_1$-question on the root $\langle\rangle$, which means that $x$ does not extend $\rho_{e,i,j}$ for any $e,i,j\in\om$.
Therefore, we must have $x\in\mathcal{S}(\emptyset)$.
Since the identity function is placed on the left node $\langle 0\rangle$ in our flowchart $\Lambda'$, we have $f_{\Lambda'}(x)=x\in\mathcal{S}(\emptyset)$.
If the first two outcomes of the true path through $\Lambda'$ along $x$ are $\langle 1,\lceil e,i,j\rceil\rangle$, then the true path is of the form $\theta=\langle 1,\lceil e,i,j\rceil\rangle \fr \xi\fr \lceil s,n\rceil$ for some almost-terminal node $\xi$.
Then, by the above claim, we have $f_{\Lambda'}(x)=f_\theta(x)=\gamma^{\leftarrow}_{e,i,j}(x)\in\mathcal{S}$ as desired.

We can use a similar argument for the case that the root of the original vein $\mathbb{V}$ is infinitely branching.
\end{proof}

It is straightforward to relativize our argument to $(z\oplus d)$-computably construct each $z$-th fiber $\mathcal{T}(z)=\mathcal{S}(z)\cup\{\gamma^z_{e,i,j}(x):e,i,j\in\om\mbox{ and }x\in\mathcal{S}(z)\}$ to satisfy the requirements $(\mathcal{N}^z_{e,i,j})_{e,i,j}$ uniformly in $z\in 2^\om$.
Consequently, $\mathcal{T}=\{(z,x):x\in\mathcal{T}(z)\}$ is $\Pi^0_1(d)$.
Moreover, as in the above argument, for such $\mathcal{T}(z)$, one can uniformly construct a $\Lambda'$-piecewise continuous function $g_z:\mathcal{T}(z)\to\mathcal{S}(z)$ obtained from some flow $\Lambda_z'$ on the vein $\mathbb{V}'$. 
Note that our description of the flow $\Lambda'$ is effective, and by relativizing this, one can obtain a $d$-computable function producing such a flow $\Lambda_z'$ from $z\in 2^\om$.
It is not hard to check that this implies that the function $g$ defined by $g(z,x)=g_z(x)$ is $\mathbb{V}''$-piecewise $d$-computable.
Consequently, our construction fulfills the global requirement $\mathcal{G}$.
This concludes the proof of Theorems \ref{thm:main}, \ref{thm:main2} and \ref{thm:main3}.

\subsection*{Proof of \texorpdfstring{($\dagger$)}{dagger}}

Finally, we verify the statement ($\dagger$) claimed in Section \ref{sec:summary}.
As mentioned in Example \ref{example:Weihrauch-principle}, $\mathbb{V}_{1,1}$, $\mathbb{V}_{1,\om}$, and $\mathbb{V}_{2,1}$ correspond to ${\sf LPO}$, ${\sf C}_\mathbb{N}$, and ${\sf LPO}'$, respectively.
Here, we say that a vein $\mathbb{V}$ {\em corresponds to} a function $F$ (or to a collection $\mathcal{F}$ of functions) if for any computable flow $\Lambda$ on $\mathbb{V}$, ${\rm TP}_\Lambda$ is Weihrauch reducible to $F$ (for some $F\in\mathcal{F}$), and if (for any $F\in\mathcal{F}$) there is a computable flow $\Lambda$ on $\mathbb{V}$ such that $F$ is Weihrauch reducible to ${\rm TP}_\Lambda$.

As in the proof of Propositions \ref{prop:chara-label} and \ref{prop:weihrauch}, one can check that $\mathbb{V}_{2,1}\fr\mathbb{V}_{1,\om}$ corresponds to $({\sf C}_\mathbb{N}\star({\sf LPO}')^\ell)_{\ell\in\om}$, and that $\mathbb{V}_{0,\om}\fr\mathbb{V}_{2,1}$ corresponds to the finite parallelization $({\sf LPO}')^\ast$.
Therefore $\mathbb{V}_{1,\om}\fr\mathbb{V}_{2,1}$ (which is equivalent to $\mathbb{V}_{1,\om}\fr\mathbb{V}_{0,\om}\fr\mathbb{V}_{2,1}$) corresponds to $({\sf LPO}')^\ast\star{\sf C}_\mathbb{N}$.
Generally, it is straightforward to show the following:
\begin{enumerate}
\item $\mathbb{V}_{1,\om}\fr\mathbb{V}_{2,1}\fr\dots\fr\mathbb{V}_{1,\om}\fr\mathbb{V}_{2,1}$ corresponds to $({\sf LPO}')^\ast\star{\sf C}_\mathbb{N}\star\dots\star({\sf LPO}')^\ast\star{\sf C}_\mathbb{N}$.
\item $\mathbb{V}_{2,1}\fr\mathbb{V}_{1,\om}\fr\dots\fr\mathbb{V}_{2,1}\fr\mathbb{V}_{1,\om}$ corresponds to ${\sf C}_\mathbb{N}\star({\sf LPO}')^\ast\star\dots\star{\sf C}_\mathbb{N}\star({\sf LPO}')^\ell$ with $\ell\in\om$.
\item $\mathbb{V}_{1,1}\fr\mathbb{V}_{0,\om}\fr\mathbb{V}_{2,1}\fr\mathbb{V}_{1,\om}\fr\dots\fr\mathbb{V}_{2,1}\fr\mathbb{V}_{1,\om}$ corresponds to ${\sf C}_\mathbb{N}\star({\sf LPO}')^\ast\star\dots\star{\sf C}_\mathbb{N}\star({\sf LPO}')^\ast\star{\sf LPO}^\ell$ with $\ell\in\om$.
\end{enumerate}

\noindent Let $\mathcal{S}$ be {\sf WKL}.
It is easy to see that there is a $\Pi^0_1$ set $\mathcal{U}\subseteq 2^\om\times 2^\om$ which is Weihrauch reducible to ${\sf WWKL}$ such that $\mathcal{U}(x)$ is nonempty and has no $x$-computable element for any $x\in 2^\om$ (e.g., consider the set mentioned in Example \ref{example:non-uniformizable} (1) or $(1/2)$-${\sf WWKL}$ in \cite{BGR15}).
We then apply Theorem \ref{thm:main3} with veins mentioned in Example \ref{example:double-prime} (4) to get $F_n$ and $G_n$.
This verifies the third inequality in the statement ($\dagger$).
The first and second inequalities in ($\dagger$) require $\ell=1$ in the above item (2), which is guaranteed by our proof of Theorem \ref{thm:main3} (see Lemma \ref{lem:global-requirement}).
For the fourth inequality in ($\dagger$), we need to replace $\ell$ in the above item (3) with finite parallelization $\ast$.
This is ensured by effective compactness of $\mathcal{U}$ and $G_n$.

\section*{Acknowledgments.}
The author would like to thank Kazuto Yoshimura for valuable discussions.
The author also would like to thank the anonymous referees for their careful reading of the article and for their comments.

\bibliographystyle{plain}
\bibliography{uniformization}

\end{document}